\newcounter{iii}
\newcommand{\bb}{{\mathcal B}}
\newcommand{\aaa}{{\mathcal A}}
\newcommand{\G}{\mathcal G}
\newcommand{\ff}{\mathcal F}
\theoremstyle{plain}
\newtheorem{thm}{Theorem}
\newtheorem{stat}{Statement}
\newtheorem{lem}{Lemma}
\newtheorem{prop}{Proposition}
\newtheorem{pro}{Problem}
\newtheorem{cor}{Corollary}
\theoremstyle{definition}
\title{Structure and properties of large intersecting families}
\author{Andrey Kupavskii\footnote{Moscow Institute of Physics and Technology, Ecole Polytechnique F\'ed\'erale de Lausanne; Email: {\tt kupavskii@yandex.ru} \ \ Research supported by the grant RNF~16-11-10014.}}
\date{}
\begin{document}
\maketitle
\begin{abstract} We say that a family of $k$-subsets of an $n$-element set is intersecting, if any two of its sets intersect. In this paper we study different extremal properties of intersecting families, as well as the structure of large intersecting families. We also give some results on $k$-uniform families without $s$ pairwise disjoint sets, related to Erd\H os Matching Conjecture.

We prove a conclusive version of Frankl's theorem on intersecting families with bounded maximal degree. This theorem, along with its generalizations to cross-intersecting families, implies many results on the topic, obtained by Frankl, Frankl and Tokushige, Kupavskii and Zakharov and others.

We study the structure of large intersecting families, obtaining some general structural theorems which generalize the results of Han and Kohayakawa, as well as Kostochka and Mubayi.

We give degree and subset degree version of the Erd\H os--Ko--Rado and the Hilton--Milner theorems, extending the results of Huang and Zhao, and Frankl, Han, Huang and Zhao. We also extend the range in which the degree version of the Erd\H os Matching conjecture holds.

\end{abstract}

\section{Introduction}

Let $[n]:=\{1,\ldots, n\}$ and denote $2^{[n]}$ the power set of $[n]$.  For integers $a\le b$ we put $[a,b]:=\{a,a+1,\ldots, b\}$ and for integers $0\le k\le n$ we denote ${[n]\choose k}$ the collection of all $k$-element subsets ({\it $k$-sets}) of $[n]$. Any collection of sets $\ff\subset 2^{[n]}$ we call a family.  We call a family \underline{intersecting}, if any two sets from it intersect. A ``trivial'' example of such family is all sets containing a fixed element. We call a family \underline{non-trivial}, if not all of its sets contain the same element.

One of the oldest and most famous results in extremal combinatorics is the Erd\H os--Ko--Rado theorem:
\begin{thm}[\cite{EKR}] Let $n\ge 2k>0$ and consider an intersecting family $\ff\subset {[n]\choose k}$. Then $|\ff|\le {n-1\choose k-1}$. Moreover, for $n>2k$ the only families attaining this bound are the families of all $k$-sets containing a given element. \end{thm}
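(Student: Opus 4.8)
The plan is to prove the inequality by Katona's cyclic permutation (double counting) method, and then to obtain the uniqueness statement for $n>2k$ by sharpening that argument. Fix a \emph{cyclic ordering} $\sigma$ of the ground set, i.e.\ a placement of $1,\dots,n$ around a circle (up to rotation, say), and call a $k$-set an \emph{arc} of $\sigma$ if its elements are consecutive along the circle. The combinatorial core is the claim: \emph{if $n\ge 2k$ and $\ff$ is intersecting, then at most $k$ members of $\ff$ are arcs of $\sigma$.} To see this, suppose $\ff$ contains an arc $A$, and relabel positions so that $A=\{1,\dots,k\}$. The arcs of $\sigma$ that meet $A$ and differ from it are precisely the $2k-2$ sets $B_i=\{i,i+1,\dots,i+k-1\}$ and $C_i=\{i-k,i-k+1,\dots,i-1\}$ for $i=2,\dots,k$ (indices mod $n$), and for each such $i$ the sets $B_i$ and $C_i$ are disjoint, since $C_i\cup B_i$ is a block of $2k\le n$ consecutive positions of which $C_i$ is the first half. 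As $\ff$ is intersecting, for each $i$ at most one of $B_i,C_i$ lies in $\ff$; together with $A$ this gives at most $1+(k-1)=k$ arcs of $\sigma$ in $\ff$, proving the claim.

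Now I count in two ways the pairs $(\sigma,F)$ for which $F\in\ff$ is an arc of $\sigma$. On one hand there are $(n-1)!$ cyclic orderings, each contributing at most $k$ such pairs by the claim, so the number of pairs is at most $k\,(n-1)!$. On the other hand, a fixed $k$-set $F$ is an arc of exactly $k!\,(n-k)!$ cyclic orderings: glue the $k$ elements of $F$ into one ordered block ($k!$ choices), then arrange the resulting $n-k+1$ objects around the circle ($(n-k)!$ choices). Hence $|\ff|\cdot k!\,(n-k)!\le k\,(n-1)!$, and dividing through yields $|\ff|\le \frac{k\,(n-1)!}{k!\,(n-k)!}=\binom{n-1}{k-1}$, the desired bound.

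For the uniqueness statement assume $n>2k$ and $|\ff|=\binom{n-1}{k-1}$; then the inequality in the double count is an equality, so every cyclic ordering contains \emph{exactly} $k$ arcs of $\ff$. The task is to upgrade this rigidity to the conclusion that $\ff=\{F\in\binom{[n]}{k}:x\in F\}$ for some fixed $x$. I would pass through the shifting (compression) technique: the $(i,j)$-shifts preserve both ``intersecting'' and $|\ff|$, so one may move to a shifted intersecting family of the same size and argue that such a family with $n>2k$ must be the full star at the element $1$ — heuristically, a shifted intersecting family is concentrated on the small elements, and at the extremal cardinality this concentration is complete. The genuinely delicate point, which I expect to be the main obstacle, is the last step: showing that $\ff$ itself (not merely its shifted image) is a star, i.e.\ controlling how the extremal structure can be destroyed or created along the sequence of compressions. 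An alternative is to avoid compressions and refine Katona's lemma directly, classifying the configurations of $k$ pairwise intersecting arcs in a single cyclic ordering and then forcing a common element of $\ff$ by varying $\sigma$ over a suitable family of orderings; in either approach it is the uniqueness part, not the cardinality bound, that carries the difficulty.
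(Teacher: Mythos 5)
The paper does not prove this statement: it is the Erd\H os--Ko--Rado theorem, cited from \cite{EKR} (and also used implicitly via the Kruskal--Katona framework), so there is no internal proof for me to compare yours against.

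On its own merits, the first two paragraphs of your argument are a correct and complete rendering of Katona's cyclic permutation proof of the cardinality bound $|\ff|\le\binom{n-1}{k-1}$: the lemma that a cyclic ordering carries at most $k$ arcs of an intersecting family is proved correctly, and the double count with $|\ff|\cdot k!\,(n-k)!\le k\,(n-1)!$ is right.

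The uniqueness clause for $n>2k$, however, is not proved; you sketch two possible routes and explicitly flag the obstruction yourself, so this is a genuine gap rather than an oversight I need to discover. Concretely: the shifting route needs the non-trivial lemma that if a single $(i,j)$-compression turns an extremal intersecting family into a star, then the pre-image was already a star (otherwise you only learn about the compressed family, not $\ff$); and the ``refine Katona's lemma'' route needs the strengthened lemma that, for $n>2k$, a cyclic ordering containing exactly $k$ intersecting arcs forces those arcs to share a common point, followed by a non-trivial gluing argument across orderings to produce a single element common to all of $\ff$. Either can be made to work, but as written your argument stops before doing so. To close the gap cleanly I would recommend the second route: show that equality in the double count forces every $\sigma$ to contain exactly $k$ arcs of $\ff$, prove that for $n>2k$ such a configuration is necessarily $\{i,\dots,i+k-1\}$ for $i=1,\dots,k$ up to rotation (the strict inequality $n>2k$ is exactly what rules out the ``antipodal'' configurations possible at $n=2k$), and then fix one $F_0\in\ff$ and vary $\sigma$ over orderings in which $F_0$ is an arc to pin down the common element.
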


Answering a question of Erd\H os, Ko, and Rado, Hilton and Milner \cite{HM} found the largest non-trivial intersecting family of $k$-sets. Up to permutation of the ground sets, it is the family consisting of the set $[2,k+1]$ and all sets that contain $1$ and intersect $[2,k+1]$. Such family has size ${n-1\choose k-1}-{n-k-1\choose k-1}+1$, which is much smaller than ${n-1\choose k-1}$ provided $n$ is large as compared to $k$.\\

This paper is mostly concerned with the properties of large intersecting families. It is separated in several relatively independent parts covering different topics. Below we give the summary of the topics covered and the structure of the paper.
\setlist{leftmargin=2cm}
\begin{enumerate}
  \item[Section \ref{sec3}:] In this section we give a conclusive version of the Frankl's degree theorem (see Theorems~\ref{thm1},~\ref{thmfr} in the next section). The main result gives the precise dependence of size of the family on (lower bound on) the number of sets not containing the most popular element. The result then is extended to cover the equality case, as well as the weighted case and the case of cross-intersecting families. It, in particular, strengthens the results of \cite{Fra1}, \cite{FT}, \cite{KZ}.
  \item[Section \ref{sec4}:] In this section we address the question, what is the structure of large intersecting families. Unlike in Section~\ref{sec3}, where, due to the Kruskal-Katona theorem, we have to deal with lexicographical families only (for definitions and necessary results, see the next section), in this section we work with general families. We prove several results, which, in particular, cover a large part of results of \cite{HK}, \cite{KostM}, and extend them much farther. One important advantage, as compared to the results of \cite{KostM}, is that the results do not require $n$ to be large w.r.t. $k$.
  \item[Section \ref{sec5}:] In this section we obtain degree and subset degree versions of the Erd\H os-Ko-Rado theorem and the Hilton-Milner theorem. This answers some of the questions posed by Huang and extends the results of \cite{HZ}, \cite{FT6}, \cite{FHHZ}.
  \item[Section \ref{sec6}:] In this section we obtain an upper bound on the minimum degree of a family $\ff\subset {[n]\choose k}$, which does not contain $s$ pairwise disjoint sets. The results extends the analogous result of \cite{HZ} for large $k$. The problem in question may be seen as a  degree vesion of the Erd\H os Matching Conjecture.
\end{enumerate}


Some of the results in Section~\ref{sec4} depend on the results from Section~\ref{sec5}, while Section~\ref{sec6} is relatively independent from the previous two sections. Section~\ref{sec6} is completely independent.

In the next section we give some of the definitions and results crucial for the paper. In Section~\ref{sec7} we give some concluding remarks and state several open problems.

\section{Preliminaries}\label{sec2}

The \underline{degree} $\delta(i)$ of an element $i$ is the number of sets from the family containing it.
For a family $\ff$ the \underline{diversity} $\gamma(\ff)$ is the quantity $|\ff|-\Delta(\ff)$, where $\Delta(\ff)$ is the \underline{maximal degree} of an element in $\ff$. Below  we discuss the connection between the diversity and the size of an intersecting family.

We will need the following result due to Kupavskii and Zakharov \cite{KZ}. It is a slightly stronger version of Frankl's result \cite{Fra1}:

\begin{thm}[\cite{KZ}]\label{thm1} Let $n>2k>0$ and $\ff\subset {[n]\choose k}$ be an intersecting family. Then, if $\gamma(\ff)\ge {n-u-1\choose n-k-1}$ for some real $3\le u\le k$, then \begin{equation}\label{eq01}|\ff|\le {n-1\choose k-1}+{n-u-1\choose n-k-1}-{n-u-1\choose k-1}.\end{equation}
\end{thm}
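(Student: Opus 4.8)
The plan is to analyze $\ff$ through its most popular element and reduce to a shadow estimate. Let $1$ be an element of maximum degree, and split $\ff$ into $\ff_1=\{F\in\ff:1\in F\}$ and $\ff_{\bar 1}=\{F\in\ff:1\notin F\}$. Put $\aaa=\{F\setminus\{1\}:F\in\ff_1\}\subseteq\binom{[2,n]}{k-1}$ and $\bb=\ff_{\bar 1}\subseteq\binom{[2,n]}{k}$, so that $|\ff|=|\aaa|+|\bb|$ and, by the choice of $1$, $|\bb|=\gamma(\ff)=:\gamma\ge\gamma_0:=\binom{n-u-1}{n-k-1}$. Since $\ff$ is intersecting, $\aaa$ and $\bb$ are cross-intersecting on the $(n-1)$-element ground set $[2,n]$ (indeed, for $1\in F_1\in\ff_1$ and $B\in\bb$ we have $\emptyset\ne F_1\cap B=(F_1\setminus\{1\})\cap B$), and $\bb$ is itself intersecting. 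So the problem becomes: maximize $|\aaa|+|\bb|$ over such pairs with $|\bb|\ge\gamma_0$.

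Next I would pass to complements inside $[2,n]$: set $\bar\bb=\{[2,n]\setminus B:B\in\bb\}\subseteq\binom{[2,n]}{n-k-1}$, and note $n-k-1\ge k>k-1$ because $n>2k$. The cross-intersecting condition says exactly that no member of $\aaa$ is contained in any member of $\bar\bb$; equivalently, $\aaa$ is disjoint from the iterated lower shadow $\partial^{(n-2k)}\bar\bb$, the family of all $(k-1)$-subsets of members of $\bar\bb$. Hence
\[
|\ff|=|\aaa|+|\bb|\;\le\;\binom{n-1}{k-1}-\bigl|\partial^{(n-2k)}\bar\bb\bigr|+|\bar\bb|.
\]
Now apply the Kruskal--Katona theorem in Lovász's form, iterated $n-2k$ times: writing $|\bar\bb|=\binom{x}{n-k-1}$ for a real $x\ge n-k-1$, one gets $\bigl|\partial^{(n-2k)}\bar\bb\bigr|\ge\binom{x}{k-1}$, with equality for colex-initial $\bar\bb$, i.e.\ for a lexicographic $\bb$. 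This yields
\[
|\ff|\;\le\;\binom{n-1}{k-1}+\binom{x}{n-k-1}-\binom{x}{k-1}\;=\;\binom{n-1}{k-1}+h(x),\qquad h(t):=\binom{t}{n-k-1}-\binom{t}{k-1}.
\]
Since $\gamma\ge\gamma_0$ we have $x\ge n-u-1$, and since $h(n-u-1)=\binom{n-u-1}{n-k-1}-\binom{n-u-1}{k-1}$ is precisely the correction term in~\eqref{eq01}, it remains to prove $h(x)\le h(n-u-1)$ for every $x$ that can actually occur.

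This last point is the main obstacle, because $h$ is \emph{not} monotone: as $t\to\infty$ the higher-degree term $\binom{t}{n-k-1}$ dominates, so large $x$ would violate the bound. Two ingredients fix this. First, an a priori bound on the diversity of an intersecting family, $\gamma(\ff)\le\binom{n-3}{k-2}=\binom{n-3}{n-k-1}$ (a result of Frankl, obtainable within the same circle of ideas), which confines $x$ to the interval $[\,n-u-1,\,n-3\,]$. Second, on this interval $h$ is maximized at the left endpoint once $u\ge 3$: Pascal's rule gives the identity $h(n-4)=h(n-3)$ (and $h(n-2)=0$), and a short analysis of the generalized binomial coefficients shows that $h$ is non-increasing on $[\,n-u-1,\,n-4\,]$ and never exceeds $h(n-4)=h(n-3)$ on $[\,n-4,\,n-3\,]$; since $n-u-1\le n-4$ exactly when $u\ge 3$, we conclude $h(x)\le h(n-u-1)$ throughout. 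Plugging this into the display above gives~\eqref{eq01}.

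I expect everything up to the last paragraph to be the routine ``compress to lexicographic families via Kruskal--Katona'' reduction, while the genuinely technical work is the verification of the binomial inequality $h(x)\le h(n-u-1)$ on the admissible range and the identification of $u\ge 3$ as exactly the threshold where it holds; the role of the diversity upper bound (which is where the hypothesis $n>2k$ also enters quantitatively) should be isolated as a separate lemma.
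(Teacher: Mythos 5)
The paper does not actually prove Theorem~\ref{thm1}; it is quoted from \cite{KZ}. The analogous (and stronger) Theorem~\ref{thmfull2} is proved in Section~\ref{sec3} by local compression: one compares a lexicographic pair $\mathcal L(S,k-1),\mathcal L(T,k)$ with a slightly modified pair by looking at a biregular bipartite graph on $\mathcal P_a,\mathcal P_b$ and arguing that one of the two parts is the largest independent set (Lemmas~\ref{lemdiv},~\ref{lemres}). Your proposal instead applies the Kruskal--Katona shadow bound once, globally, and then tries to control the resulting correction term $h(x)=\binom{x}{n-k-1}-\binom{x}{k-1}$ by a monotonicity argument. These are genuinely different strategies.

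The difficulty is precisely the non-monotonicity you point out, and the patch you propose does not close it. You correctly note that $h(n-4)=h(n-3)$ and that $h$ is non-increasing up to $n-4$; but $h(n-2)=0$ while $h(n-u-1)\le 0$, so $h$ increases again on $[n-3,n-2]$ and reaches $0$ at $x=n-2$. Your fix is to invoke ``an a priori bound $\gamma(\ff)\le\binom{n-3}{k-2}$'' to confine $x$ to $[n-u-1,n-3]$. But this bound is \emph{not} available in the range of the theorem: the paper itself states (after Theorem~\ref{thmfull1}, and again in Section~\ref{sec7}) that it was only recently proved for $n>ck$ with an absolute constant $c$, is merely conjectured for $n>3k-2$, and is \emph{false} for $2k<n<3k$ --- for those $n$ there exist intersecting families with diversity exceeding $\binom{n-3}{k-2}$. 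So for $n$ between $2k$ and (roughly) $3k$ your argument leaves a genuine gap: $x$ can enter $(n-3,n-2)$, where $h(x)$ strictly exceeds $h(n-u-1)$, and the one-shot shadow bound $|\ff|\le\binom{n-1}{k-1}+h(x)$ is then too weak. (Concretely, with $n=12,k=5,u=3$, one has $h(n-4)=-42$ but $h(x)$ climbs through $-28$ and up to $0$ on $(n-3,n-2]$; the necessary constraints from $\bb$ being intersecting and $1$ being most popular do not push $x$ below $n-3$.) The local switching argument used in \cite{KZ} (and in Lemmas~\ref{lemdiv},~\ref{lemres} here) sidesteps this entirely, since it never needs a global upper bound on the diversity --- it only compares two adjacent lexicographic pairs at a time.

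So: the reduction to cross-intersecting families, the passage to complements, and the Kruskal--Katona step in Lov\'asz form are all correct, but the final monotonicity step is not justified for $2k<n<3k$, and the auxiliary diversity bound you lean on is exactly the statement the paper says is false in that range. To complete a proof along your lines you would either need to restrict to $n\ge 3k$ (roughly the regime Frankl's original argument handles), or replace the one-shot shadow bound with the incremental bipartite comparison used in the source.
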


The bound from Theorem \ref{thm1} is sharp for integer $u$, as witnessed by the example
$$\mathcal H_u:=\{A\in {[n]\choose k}:[2,u]\subset A\}\cup\{A\in{[n]\choose k}: 1\in A, [2,u]\cap A\ne \emptyset\}.$$
 We note that the case $u = k$ of Theorem~\ref{thm1} is precisely the Hilton-Milner theorem.

To allow the reader to compare Theorem~\ref{thm1} and the original Frankl's theorem, let us state it here.
\begin{thm}[\cite{Fra1}]\label{thmfr} Let $n>2k>0$ and $\ff\subset {[n]\choose k}$ be an intersecting family. Then, if $\Delta(\ff)\le {n-1\choose k-1}-{n-u-1\choose k-1}$ for some integer $3\le u\le k$, then \begin{equation*}|\ff|\le {n-1\choose k-1}+{n-u-1\choose n-k-1}-{n-u-1\choose k-1}.\end{equation*}\end{thm}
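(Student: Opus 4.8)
The plan is to derive Frankl's theorem (Theorem~\ref{thmfr}) as a direct corollary of Theorem~\ref{thm1}, by showing that the hypothesis $\Delta(\ff)\le {n-1\choose k-1}-{n-u-1\choose k-1}$ forces the diversity $\gamma(\ff)$ to be large enough to apply Theorem~\ref{thm1}. The two conclusions are literally identical, so the only content is a bound on $\gamma(\ff)$.

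First I would recall the classical fact that for $n>2k$ any intersecting family $\ff\subset{[n]\choose k}$ satisfies $|\ff|\ge {n-1\choose k-1}$ only in trivial cases, but what we actually need is a lower bound on $|\ff|$ in terms of $\Delta(\ff)$ that is \emph{not} available for free — so instead I would argue by contradiction or by a dichotomy on the size of $\gamma(\ff)$. Suppose $\gamma(\ff) < {n-u-1\choose n-k-1} = {n-u-1\choose k-u}$. I want to conclude that then $|\ff|$ is already at most the claimed bound, making the statement trivially true in that regime; and in the complementary regime $\gamma(\ff)\ge {n-u-1\choose n-k-1}$, Theorem~\ref{thm1} applies verbatim (note $3\le u\le k$ is exactly the hypothesis there, and for $u=k$ the binomial ${n-u-1\choose k-u}={n-k-1\choose 0}=1$, matching the Hilton--Milner threshold). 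So the crux is the small-diversity case.

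In the small-diversity case, write $|\ff| = \Delta(\ff) + \gamma(\ff)$. Using the hypothesis $\Delta(\ff)\le {n-1\choose k-1}-{n-u-1\choose k-1}$ together with the assumed bound $\gamma(\ff) < {n-u-1\choose n-k-1}$, we get $|\ff| < {n-1\choose k-1}-{n-u-1\choose k-1}+{n-u-1\choose n-k-1}$, which is exactly the right-hand side of the desired inequality (with strict inequality, hence certainly $\le$). Thus in \emph{both} cases the conclusion holds: if $\gamma(\ff)$ is small the bound follows from the degree hypothesis alone, and if $\gamma(\ff)$ is large it follows from Theorem~\ref{thm1}. This is the whole argument; the only thing to check carefully is that the two binomial expressions line up, i.e.\ that ${n-u-1\choose n-k-1}={n-u-1\choose (n-u-1)-(n-k-1)}={n-u-1\choose k-u}$, which is immediate from symmetry of binomials since $(n-u-1)-(n-k-1)=k-u\ge 0$.

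I do not expect a genuine obstacle here: the statement is essentially a reformulation of Theorem~\ref{thm1}, trading the diversity hypothesis for a maximal-degree hypothesis via the identity $|\ff|=\Delta(\ff)+\gamma(\ff)$. The one mild subtlety is making sure the argument is clean at the boundary $u=k$ and that the integrality of $u$ (assumed in Theorem~\ref{thmfr} but only ``real'' in Theorem~\ref{thm1}) causes no trouble — it does not, since integer $u$ is a special case of real $u$. If anything needs care it is simply presenting the two-case split so that the reader sees both cases genuinely yield the same final bound; I would state it as a one-line dichotomy rather than a contradiction argument for transparency.
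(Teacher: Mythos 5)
Your proof is correct, and it is exactly the dichotomy the paper has in mind when it asserts (without spelling it out) that Theorem~\ref{thmfr} is ``immediately implied'' by Theorem~\ref{thm1}: if $\gamma(\ff)\ge{n-u-1\choose n-k-1}$ apply Theorem~\ref{thm1} directly, and otherwise $|\ff|=\Delta(\ff)+\gamma(\ff)<{n-1\choose k-1}-{n-u-1\choose k-1}+{n-u-1\choose n-k-1}$ already gives the bound. Nothing further to add.
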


We note that Theorem~\ref{thmfr} is immediately implied by Theorem~\ref{thm1}, while in the other direction it is not true, even for the integer values of $u$. \\

One of the key ingredients in the proofs of several theorems is the Kruskal-Katona theorem. Below we state it in terms of lexicographical orderings. Let us first give some definitions.
A \underline{lexicographical order} (lex) $<$ on the sets from ${[n]\choose k}$ is an order, in which $A<B$ iff  the minimal element of $A\setminus B$ is smaller than the minimal element of $B\setminus A$.
 For $0\le m\le {n\choose k}$ let $\mathcal L(m,k)$ be the collection of $m$ largest sets with respect to lex.

 We say that two families $\mathcal A,\mathcal B$ are \underline{cross-intersecting}, if for any $A\in\mathcal A, B\in \mathcal B$ we have $A\cap B\ne \emptyset$.

\begin{thm}[\cite{Kr},\cite{Ka}]\label{thmHil}Suppose that $\mathcal A\subset {[n]\choose a}, \mathcal B\subset {[n]\choose b}$ are cross-intersecting. Then the families $\mathcal L(|\mathcal A|,a),\mathcal L(|\mathcal B|, b)$ are also cross-intersecting.
\end{thm}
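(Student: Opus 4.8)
The plan is to deduce the statement from the classical single--family (shadow) form of the Kruskal--Katona theorem, bridging the two via complementation. Two harmless reductions first: if $a+b>n$ then every $a$-set meets every $b$-set, so both the hypothesis and the conclusion hold automatically; and if $\mathcal A$ or $\mathcal B$ is empty the statement is trivial. So assume $a+b\le n$ and both families nonempty. For a family $\mathcal F\subseteq\binom{[n]}{a}$ write $\mathcal F^{c}:=\{[n]\setminus F:F\in\mathcal F\}\subseteq\binom{[n]}{n-a}$, and let $\partial^{(t)}$ denote the $t$-th shadow operator, sending a family of $\ell$-sets to the family of all $(\ell-t)$-subsets of its members. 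The key elementary observation is that a $b$-set $B$ is disjoint from some $A\in\mathcal A$ exactly when $B\subseteq[n]\setminus A$ for some $A$, i.e.\ exactly when $B\in\partial^{(n-a-b)}(\mathcal A^{c})$. Consequently $\mathcal A$ and $\mathcal B$ are cross-intersecting if and only if
\[
\mathcal B\cap\partial^{(n-a-b)}(\mathcal A^{c})=\emptyset ,
\]
and in particular being cross-intersecting forces the numerical inequality $|\mathcal B|+\bigl|\partial^{(n-a-b)}(\mathcal A^{c})\bigr|\le\binom{n}{b}$.

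Next I record the form of Kruskal--Katona that I use: among $g$-uniform families of a fixed size, a \emph{final} segment of the lex order minimizes every iterated shadow, and the iterated shadow of a final segment is again a final segment --- this is the classical theorem, since a final lex-segment is merely a relabelling (via $i\mapsto n+1-i$) of a colex-initial segment. The reason this meshes with complementation is that $F\mapsto[n]\setminus F$ is an \emph{order-reversing} bijection $\binom{[n]}{a}\to\binom{[n]}{n-a}$: it carries the initial segment $\mathcal L(|\mathcal A|,a)$ onto the final $|\mathcal A|$-segment of $\binom{[n]}{n-a}$, hence onto a shadow-minimizing family whose iterated shadows are themselves final segments.

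Now set $\mathcal A'=\mathcal L(|\mathcal A|,a)$ and $\mathcal B'=\mathcal L(|\mathcal B|,b)$. By the previous paragraph $(\mathcal A')^{c}$ is a shadow-minimizing family of $\binom{[n]}{n-a}$ of size $|\mathcal A|$, so Kruskal--Katona yields $\bigl|\partial^{(n-a-b)}((\mathcal A')^{c})\bigr|\le\bigl|\partial^{(n-a-b)}(\mathcal A^{c})\bigr|$, while $\partial^{(n-a-b)}((\mathcal A')^{c})$ is a final segment of $\binom{[n]}{b}$. Combining this with the numerical inequality from the reformulation,
\[
|\mathcal B'|+\bigl|\partial^{(n-a-b)}((\mathcal A')^{c})\bigr|\ \le\ |\mathcal B|+\bigl|\partial^{(n-a-b)}(\mathcal A^{c})\bigr|\ \le\ \binom{n}{b}.
\]
Since $\mathcal B'$ is an \emph{initial} segment of $\binom{[n]}{b}$ and $\partial^{(n-a-b)}((\mathcal A')^{c})$ is a \emph{final} segment, and an initial $m$-segment and a final $K$-segment of $\binom{[n]}{b}$ are disjoint precisely when $m+K\le\binom{n}{b}$, this single inequality forces $\mathcal B'\cap\partial^{(n-a-b)}((\mathcal A')^{c})=\emptyset$. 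By the reformulation, $\mathcal A'$ and $\mathcal B'$ are cross-intersecting, which is exactly the assertion.

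The part requiring genuine care is the orientation bookkeeping in the middle paragraph --- that the complement of an initial lex-segment is a final lex-segment, that final lex-segments are precisely the shadow-minimizers and that their shadows remain final, and the trivial-but-essential fact that an initial $m$-segment and a final $K$-segment are disjoint iff $m+K\le\binom{n}{b}$. Once the conventions are fixed these are routine, but they are what let the lone cardinality bound above certify the disjointness we need. An alternative, slightly longer route avoids complementation altogether: apply simultaneous left-compressions $S_{ij}$ to $\mathcal A$ and $\mathcal B$ (these preserve cross-intersection and cardinalities and reduce to shifted families), then invoke Kruskal--Katona to pass from shifted families to lex; but this rests on the same extremal input, so I would present the complementation argument.
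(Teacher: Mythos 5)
The paper cites this as a known form of the Kruskal--Katona theorem and does not supply a proof, so there is nothing internal to compare against. Your argument is correct and is a clean derivation of the cross-intersecting lex statement from the standard shadow form of Kruskal--Katona. The reformulation is exactly right: $B$ meets every member of $\mathcal A$ iff $B\notin\partial^{(n-a-b)}(\mathcal A^{c})$, which converts cross-intersection into the cardinality bound $|\mathcal B|+|\partial^{(n-a-b)}(\mathcal A^{c})|\le\binom{n}{b}$; then Kruskal--Katona, applied to the relabelled colex-initial (equivalently lex-final) segment $(\mathcal A')^{c}$, drives that bound down to one between an initial and a final segment of $\binom{[n]}{b}$, which certifies their disjointness. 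The single point worth spelling out more explicitly is the order-reversal under complementation: since $A^{c}\triangle B^{c}=A\triangle B$ and $\min(A\triangle B)\in A$ iff $\min(A^{c}\triangle B^{c})\in B^{c}$, the map $F\mapsto[n]\setminus F$ is an order-reversing bijection, so it indeed carries $\mathcal L(|\mathcal A|,a)$ to the lex-final $|\mathcal A|$-segment of $\binom{[n]}{n-a}$; you state this but do not verify it, and it is the hinge of the whole argument. Your parenthetical note that one can also run simultaneous compressions $S_{ij}$ on both families and then compare with lex is correct and is the route more commonly seen in the literature, but the complementation route you chose is at least as tidy and makes the dependence on the iterated shadow bound transparent.
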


\section{The complete diversity version of Frankl's theorem}\label{sec3}

In this section we study in detail the relationship between the diversity of an intersecting family and its size. First, note that, if the value of diversity is given precisely, then it is very easy to determine the largest intersecting family with such diversity using Theorem~\ref{thmHil}. Studying the size of an intersecting family with given upper bounds on diversity is not interesting: in general, the smaller the diversity is, the larger families with such diversity exist.

In this section we obtain a strengthened version of Theorem~\ref{thm1}, which will tell {\it exactly}, how large an intersecting family may be, given a {\it lower} bound on its diversity. We give all ``extremal'' values of diversity and the sizes of the corresponding families.


The difficulty to obtain such a version of Theorem~\ref{thm1} is that, while Theorem~\ref{thmHil} gives a very strong and clear characterisation of families with fixed diversity, the size of the family is not monotone w.r.t. diversity (the size of the largest family with a given diversity does not necessarily decrease when diversity increases, although it is true in most cases), and an extra effort is needed to find the right point of view.\\

We will give two versions of the main theorem of this section. First, we give a ``numerical'' version with explicit sharp bounds on the size of an intersecting family. It may be more practical to apply in some cases, but it is difficult to grasp, what is hidden behind the binomial coefficients in the formulation. Thus, later in the section (and as an intermediate step of the proof), we will give a ``conceptual'' version of our main theorem. We note that the proof that we present is completely computation-free.  In the later parts of this section we present strengthenings and generalisations of our main result. We will settle the equality case in Theorems~\ref{thmfull1},~\ref{thmfull2}, as well as consider the weighted case and a generalization to the case of general cross-intersecting families.

We note that the main results of the section are meaningful for any $k\ge 3$.\\

The following representation of natural numbers is important for the (classic form of) the Kruskal-Katona theorem. Given positive integers $\gamma$ and $k$, one can always write down $\gamma$ {\it uniquely} in the \underline{$k$-cascade form}:
$$\gamma = {a_k\choose k}+{a_{k-1}\choose k-1}+\ldots +{a_s\choose s}, \ \ a_k>a_{k-1}>\ldots >a_s\ge 1.$$

For the sake of comparison, let us state the classical version of the Kruskal-Katona theorem (equivalent to Theorem~\ref{thmHil}).

\begin{thm}[\cite{Kr},\cite{Ka}] Let $\ff\subset {[n]\choose k}$ and put $$\partial(\ff):=\big\{F'\in {[n]\choose k-1}: F'\subset F \text{ for some }F\in \ff\big\}.$$ If $|\ff| = {a_k\choose k}+\ldots +{a_s\choose s}$, then
$$|\partial(\ff)|\ge {a_k\choose k-1}+{a_{k-1}\choose k-1}+\ldots +{a_s\choose s-1}.$$
\end{thm}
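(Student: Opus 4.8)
This is the classical Kruskal--Katona theorem, and the plan is to deduce it from Theorem~\ref{thmHil} (which is exactly its reformulation for cross-intersecting families), together with a direct evaluation of the shadow of the extremal family. Thus there are two ingredients: \emph{(i)} among all $\ff\subseteq\binom{[n]}{k}$ with $|\ff|=m$, the shadow $\partial(\ff)$ is smallest for a canonical ``initial segment'' family $\mathcal C=\mathcal C(m,k)$; and \emph{(ii)} $|\partial(\mathcal C)|$ equals the claimed expression once $m$ is written in $k$-cascade form. Together these give $|\partial(\ff)|\ge|\partial(\mathcal C)|=\binom{a_k}{k-1}+\binom{a_{k-1}}{k-2}+\ldots+\binom{a_s}{s-1}$.

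For \emph{(i)} I would argue by complementation. Fix $\ff$ and put $\mathcal G:=\binom{[n]}{k-1}\setminus\partial(\ff)$ and $\ff^{c}:=\{[n]\setminus F:F\in\ff\}\subseteq\binom{[n]}{n-k}$. A $(k-1)$-set $G$ lies outside $\partial(\ff)$ exactly when $G\not\subseteq F$ for every $F\in\ff$, i.e.\ when $G$ meets $[n]\setminus F$ for every $F\in\ff$; hence $\mathcal G$ and $\ff^{c}$ are cross-intersecting, with $|\mathcal G|=\binom{n}{k-1}-|\partial(\ff)|$ and $|\ff^{c}|=m$. By Theorem~\ref{thmHil}, the families $\mathcal L(|\mathcal G|,k-1)$ and $\mathcal L(m,n-k)$ are cross-intersecting too, so $|\mathcal G|$ is at most the number of $(k-1)$-sets meeting every member of $\mathcal L(m,n-k)$, and this number is $\binom{n}{k-1}-|\partial(\widetilde{\mathcal L})|$, where $\widetilde{\mathcal L}:=\{[n]\setminus B:B\in\mathcal L(m,n-k)\}\subseteq\binom{[n]}{k}$. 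Comparing sizes yields $|\partial(\ff)|\ge|\partial(\widetilde{\mathcal L})|$, and $\widetilde{\mathcal L}$ is, up to a relabelling of the ground set (which does not affect shadow sizes), the initial segment $\mathcal C=\mathcal C(m,k)$ of size $m$ of the colexicographic order on $\binom{[n]}{k}$ (the one in which a $k$-set precedes another when the largest element of their symmetric difference lies in the latter).

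For \emph{(ii)} I would write $m=\binom{a_k}{k}+\binom{a_{k-1}}{k-1}+\ldots+\binom{a_s}{s}$ and observe that $\mathcal C(m,k)$ consists of $\binom{[a_k]}{k}$ together with all $k$-sets $\{a_k+1\}\cup F$ where $F$ runs over the colex initial segment of $\binom{[a_k]}{k-1}$ of size $m-\binom{a_k}{k}=\binom{a_{k-1}}{k-1}+\ldots+\binom{a_s}{s}$; the defining inequalities of the cascade guarantee that this quantity is less than $\binom{a_k}{k-1}$, so the description makes sense, and that $\binom{a_{k-1}}{k-1}+\ldots+\binom{a_s}{s}$ is itself the $(k-1)$-cascade form of $m-\binom{a_k}{k}$. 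The shadow of $\mathcal C(m,k)$ then splits as the disjoint union of $\binom{[a_k]}{k-1}$ (of size $\binom{a_k}{k-1}$) and the sets $\{a_k+1\}\cup G$ with $G$ in the shadow of that smaller colex segment (the disjointness being clear since only the second part contains $a_k+1$). An induction on the number of terms of the cascade, with the one-term case $\mathcal C=\binom{[a_k]}{k}$, $\partial(\mathcal C)=\binom{[a_k]}{k-1}$, then gives $|\partial(\mathcal C(m,k))|=\binom{a_k}{k-1}+\binom{a_{k-1}}{k-2}+\ldots+\binom{a_s}{s-1}$, and combining with \emph{(i)} finishes the proof.

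All the substance is in Theorem~\ref{thmHil}, which we are free to assume, so there is no genuine obstacle; the result is classical and is included only for completeness and comparison. The only points requiring care are bookkeeping: in \emph{(i)}, that ``cross-intersecting with $\mathcal L(m,n-k)$'' translates precisely into ``disjoint from $\partial(\widetilde{\mathcal L})$'' and that $\widetilde{\mathcal L}$ really is a relabelled colex initial segment; and in \emph{(ii)}, that the two pieces of $\partial(\mathcal C)$ are disjoint and that the cascade inequalities make the recursion legitimate.
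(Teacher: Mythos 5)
Your derivation is correct, and it is worth noting that the paper itself gives \emph{no proof} of this statement: it is cited from [Kr], [Ka] and flagged merely as ``equivalent to Theorem~\ref{thmHil},'' so you are supplying the omitted equivalence argument rather than reproducing a printed one. Your route is the standard one. Part \emph{(i)} is exactly the complementation that links the cross-intersecting form to the shadow form: from $\mathcal G=\binom{[n]}{k-1}\setminus\partial(\ff)$ and $\ff^{c}$ being cross-intersecting, Theorem~\ref{thmHil} compresses both to lex segments, and the upper bound on $|\mathcal G|$ by the number of $(k-1)$-sets meeting every member of $\mathcal L(m,n-k)$ translates directly to $|\partial(\ff)|\ge|\partial(\widetilde{\mathcal L})|$. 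The observation that the complement of a lex initial segment, after the relabelling $i\mapsto n+1-i$, is a colex initial segment is correct and standard, and shadow cardinality is indeed invariant under relabelling. Part \emph{(ii)} is the usual peeling recursion: the cascade inequalities $a_k>a_{k-1}>\ldots>a_s$ guarantee $m-\binom{a_k}{k}<\binom{a_k}{k-1}$, so the tail of the colex segment sits inside $\{A:A\ni a_k+1,\ A\setminus\{a_k+1\}\subset[a_k]\}$; the shadow then splits into $\binom{[a_k]}{k-1}$ plus a copy of the $(k-1)$-shadow of the shorter colex segment, the two pieces being disjoint because one avoids $a_k+1$ and the other contains it, and induction closes the computation. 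Two tiny bookkeeping points you might make explicit if you wrote this up in full: you implicitly use the uniqueness of the $k$-cascade representation to recognize $\binom{a_{k-1}}{k-1}+\ldots+\binom{a_s}{s}$ as the $(k-1)$-cascade of $m-\binom{a_k}{k}$ (which holds since $a_{k-1}>a_{k-2}>\ldots$); and the paper's convention ``$\mathcal L(m,k)$ is the $m$ largest sets w.r.t.\ lex'' should be read with care, since under the relation $A<B$ as defined, the lex-extremal end that the paper works with is the one starting from $\{1,\ldots,k\}$ --- but either reading lands, after complementation and relabelling, on a colex initial segment, so nothing in your argument breaks. The substance is exactly where you say it is, in Theorem~\ref{thmHil}, and your two-step reduction is the intended content of the paper's ``equivalent to'' remark.
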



  To state our theorem, we need to represent the diversity of an intersecting family in a cascade form. Given a number $\gamma$, let us write it in the  $(n-k-1)$-cascade form in the following particular way:
  $$\gamma = {n-b_1\choose n-k-1}+{n-b_{2}\choose n-k-2}+\ldots+{n-b_{s_b}\choose n-s-1},$$
  where $1\le b_1<b_2<\ldots<b_{s_b}.$
   Define $T_{\gamma}:=\{b_1,\ldots,b_{s_b}\}$ and put $S_{\gamma}:=\{b_{s_b}\}\cup ([2,b_{s_b}]\setminus T_{\gamma})$. Note that $S_{\gamma}\cup T_{\gamma} = [2,b_{s_b}]$ and $S_{\gamma}\cap T_{\gamma} = \{b_{s_b}\}$. We assume that $S_{\gamma} = \{a_1,\ldots, a_{s_a}\}$, where $2\le a_1<\ldots<a_{s_a} = b_{s_b}$. \\

   We call a number $\gamma$ \underline{resistant}, if the following holds:
   \begin{enumerate}
   \item $s_a = |S_{\gamma}|\le k$, $s_b = |T_{\gamma}|\le k-1$;
   \item $b_i>2i+2$ for each $i\in [s_b]$;
    \item For convenience, we assume that ${n-4\choose k-3}$ is a resistant number.
   \end{enumerate}

Thus, in particular, any integer $\gamma>{n-4\choose k-3}$ will have ${n-4\choose k-3}$ as one of the members in the $(n-k-1)$-cascade form, so such $\gamma$ is not resistant.

Let $1 = \gamma_1<\gamma_2<\ldots <\gamma_{m} = {n-4\choose k-3}$ be all the resistant numbers. Put $\gamma_0=0$  for convenience.


\begin{thm}\label{thmfull1}  Let $n>2k\ge6$. Consider an intersecting family $\ff\subset {[n]\choose k}$. Suppose that $\gamma_{l-1}<\gamma (\ff)\le\gamma_l$ for $l\in [m]$ and that the representation of $\gamma_l$ in the $(n-k-1)$-cascade form is
$$\gamma_l = {n-b_1\choose n-k-1}+{n-b_{2}\choose n-k-2}+\ldots +{n-b_{s_b}\choose n-s-1},$$
then \begin{equation}\label{eqfull1}|\ff|\le {n-a_1\choose n-k}+{n-a_{2}\choose n-k-1}+\ldots +{n-a_{s_a}\choose n-s_a}+\gamma_l,\end{equation}
where $\{b_1,\ldots, b_{s_b}\} = T_{\gamma}$ and $\{a_1,\ldots, a_{s_a}\} = S_{\gamma}$.

 The expression in the right hand side of \eqref{eqfull1} strictly decreases as $l$ increases.

Moreover, the presented bound is sharp: for each $l =1,\ldots, m$ there exists an intersecting family with diversity $\gamma_l$ which achieves the bound in \eqref{eqfull1}.
\end{thm}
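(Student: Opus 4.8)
The plan is to deduce Theorem~\ref{thmfull1} from Theorem~\ref{thmHil} (Kruskal--Katona in the cross-intersecting form) together with a careful combinatorial analysis of how the size of the extremal family depends on the diversity. The starting point is standard: since $\ff$ is intersecting with $n>2k$, one may apply shifting and reduce to the case where $\ff$ is a lexicographical family, or more precisely use the following dichotomy on the ``link'' of the most popular element $i$. Write $\ff(\bar i)=\{F\in\ff:i\notin F\}$ and $\ff(i)=\{F\setminus\{i\}:i\in F\in\ff\}$; then $\gamma(\ff)=|\ff(\bar i)|$ and the families $\ff(\bar i)\subset\binom{[n]\setminus i}{k}$ and $\ff(i)\subset\binom{[n]\setminus i}{k-1}$ are cross-intersecting. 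By Theorem~\ref{thmHil} we may replace them by the initial segments $\mathcal L(\gamma,k)$ and $\mathcal L(m',k-1)$ in the lex order on $[n]\setminus i\cong[n-1]$, where $m'=|\ff(i)|$; for fixed $\gamma=\gamma(\ff)$ the quantity $m'$ is maximized, hence $|\ff|=\gamma+m'$ is maximized, by taking $\ff(i)=\mathcal L(m'_{\max}(\gamma),k-1)$ to be the largest family cross-intersecting with $\mathcal L(\gamma,k)$. So everything reduces to understanding the function $\gamma\mapsto \gamma+m'_{\max}(\gamma)$.

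The second step is to compute $m'_{\max}(\gamma)$ explicitly via the cascade form. Writing $\gamma$ in the $(n-k-1)$-cascade form indexed as in the statement, $\mathcal L(\gamma,k)$ consists exactly of the $k$-sets whose complement (inside $[n-1]$) is lex-below a certain threshold; a $(k-1)$-set $B$ is cross-intersecting with all of $\mathcal L(\gamma,k)$ iff $B$ meets every $k$-set in $\mathcal L(\gamma,k)$, which translates into a condition on $B$ relative to the ``staircase'' defined by $T_\gamma=\{b_1,\dots,b_{s_b}\}$. Carrying this out shows that the largest such family is itself an initial lex segment of $\binom{[n-1]}{k-1}$ whose size is $\binom{n-a_1}{n-k}+\binom{n-a_2}{n-k-1}+\dots+\binom{n-a_{s_a}}{n-s_a}$ with $\{a_1,\dots,a_{s_a}\}=S_\gamma$; here the passage from $T_\gamma$ to $S_\gamma=\{b_{s_b}\}\cup([2,b_{s_b}]\setminus T_\gamma)$ is exactly the ``complementation'' produced by the cross-intersecting constraint. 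This gives the right-hand side of \eqref{eqfull1} as the value of $\gamma+m'_{\max}(\gamma)$ when $\gamma=\gamma_l$, and the matching extremal family is $\mathcal H$-type: fix element $1$, take all $k$-sets containing the appropriate small ``core'' together with all $k$-sets through $1$ meeting it, which realizes diversity exactly $\gamma_l$.

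The third and most delicate step is monotonicity: one must show that $f(\gamma):=\gamma+m'_{\max}(\gamma)$ attains its maximum over the interval $(\gamma_{l-1},\gamma_l]$ precisely at the right endpoint $\gamma_l$, and that these endpoint values strictly decrease in $l$. This is exactly the point flagged in the introduction as the source of difficulty — $f$ is not monotone in $\gamma$. The idea is that as $\gamma$ increases by one, $m'_{\max}$ can only decrease or stay the same, and the \emph{resistant} numbers are defined precisely so that $f(\gamma)>f(\gamma-1)$ (i.e.\ $m'_{\max}$ stays flat, so adding one set to $\ff(\bar i)$ costs nothing in $\ff(i)$) exactly when $\gamma$ is resistant; between consecutive resistant numbers $f$ is non-increasing. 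I would prove this by analyzing what happens to the cascade form of $\gamma$ when it is incremented: conditions~(1)--(2) in the definition of resistant (the bounds $s_a\le k$, $s_b\le k-1$, and $b_i>2i+2$) are exactly the conditions under which the new smallest cascade term can be ``absorbed'' into $S_\gamma$ without forcing $S_\gamma$ to shrink. A clean way to organize this is to first prove the ``conceptual'' version of the theorem promised in the text — phrasing the extremal families directly in terms of the pair $(S_\gamma,T_\gamma)$ and their nesting — and then read off both the numerical bound \eqref{eqfull1} and the monotonicity from the combinatorics of these set systems. The strict decrease of the endpoint values then follows because passing from $\gamma_{l-1}$ to $\gamma_l$ with no resistant number in between forces at least one genuine drop in $m'_{\max}$ that outweighs the increase in $\gamma$.

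The main obstacle, as indicated, is the non-monotonicity in the third step: getting the \emph{exact} list of resistant numbers right and verifying that $f$ has no local maxima strictly inside an interval $(\gamma_{l-1},\gamma_l]$. Everything else — the reduction to lex families, the cascade computation of $m'_{\max}$, and the construction of the extremal examples $\mathcal H_u$-style — is routine given Theorems~\ref{thmHil} and~\ref{thm1}.
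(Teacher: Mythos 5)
Your first two steps — reducing via shifting/Kruskal--Katona to a pair of lexicographic cross-intersecting families $\ff(1),\ff(\bar 1)$, then computing $m'_{\max}(\gamma)$ and matching the right-hand side of \eqref{eqfull1} through the passage from $T_\gamma$ to $S_\gamma$ in the cascade form — line up with the paper's reduction and its proof of Theorem~\ref{thmfull1} from the conceptual Theorem~\ref{thmfull2}. The third step, however, contains a genuine error and a genuine gap.

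The error is your characterization of resistant numbers. You assert that $\gamma$ is resistant precisely when $f(\gamma)>f(\gamma-1)$, i.e.\ when $m'_{\max}$ stays flat under a unit increase of $\gamma$, and that $f$ is non-increasing between consecutive resistant numbers. This is false: for $\gamma\in\{1,\dots,k-3\}$ every value is resistant, yet $m'_{\max}(\gamma)-m'_{\max}(\gamma-1)=-\binom{n-k-1-\gamma}{k-1-\gamma}$ (the number of $(k-1)$-sets disjoint from the new lex set but not from the earlier ones), so $f$ drops by a large amount at each of these resistant values. Resistant numbers are not the points where $f$ increases; they are the places where one cannot ``improve'' the cross-intersecting pair $(\aaa,\bb)$ by shrinking $\bb$ while growing $\aaa$ at least as much. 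The correct picture is that for every non-resistant $\gamma$ in $(\gamma_{l-1},\gamma_l]$ one can move to some $\gamma'$ with $\gamma<\gamma'\le\gamma_l$ and $f(\gamma')\ge f(\gamma)$, so the interval maximum is attained at $\gamma_l$ — not that $f$ is monotone inside the interval.

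The gap is that the only tool you offer to establish this behaviour is ``analyzing what happens to the cascade form of $\gamma$ when it is incremented,'' which is not an argument. The paper's mechanism is concrete: for a non-resistant pair with characteristic sets $S,T$ it exhibits a biregular bipartite graph $G$ on parts $\mathcal P_a=\{P\in\binom{[2,n]}{k-1}:P\cap[i]=[2,i]\cap S\}$ and $\mathcal P_b=\{P\in\binom{[2,n]}{k}:P\cap[i]=[i]\setminus S\}$, with edges between disjoint sets; because $(\aaa\cap\mathcal P_a)\cup(\bb\cap\mathcal P_b)$ is independent and the largest independent set of a biregular bipartite graph is a part, exchanging $\aaa\cap\mathcal P_a$ for $\mathcal P_b$ (or vice versa) only helps. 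This gives both Lemma~\ref{lemdiv} (a non-resistant pair is dominated by a resistant pair with larger $\bb$) and, in a twisted form, Lemma~\ref{lemres} (the resistant values are strictly decreasing in $l$). Nothing in your proposal replaces these two lemmas, and since your intended substitute rests on the false monotonicity picture above, the third step cannot be completed as written.
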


\vskip+0.2cm
Let us first try to familiarize the reader with the statement of the theorem. We have $\gamma_i = i$ for $i = 1,\ldots, k-3$. Indeed,  for $1\le \gamma <n-k-1$ we have $\gamma  = {n-k-1\choose n-k-1}+{n-k-2\choose n-k-2}+\ldots +{n-k-\gamma\choose n-k-\gamma}$. Thus, for any such $\gamma$ we have $T_{\gamma} = [k+1,k+\gamma]$ and $S_{\gamma} = [2,k]\cup \{\gamma\}$. Thus, the first condition of the definition of a resistant number is satisfied if $\gamma\le k-1$. However, the second condition is only satisfied when $k+\gamma>2\gamma+2$, which is equivalent to $\gamma\le k-3$ (or when $k=3$, $\gamma=1$). From the discussion above we also get that for $k>3$ $\gamma_{k-2} = {n-k\choose n-k-1}=n-k.$

\begin{prop} The bound in Theorem~\ref{thmfull1} is always at least as strong as the bound in Theorem~\ref{thm1} for intersecting $\ff\subset {[n]\choose k}$ with diversity $\gamma(\ff)\le {n-4\choose k-3}$.
\end{prop}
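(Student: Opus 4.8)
The plan is to compare the two upper bounds directly, using the fact that both Theorem~\ref{thm1} and Theorem~\ref{thmfull1} apply to the same intersecting family $\ff$ once a lower bound on $\gamma(\ff)$ is fixed. Fix an intersecting $\ff\subset{[n]\choose k}$ with $\gamma(\ff)\le{n-4\choose k-3}$, and let $l\in[m]$ be the index with $\gamma_{l-1}<\gamma(\ff)\le\gamma_l$. Theorem~\ref{thmfull1} gives the bound \eqref{eqfull1} with the right-hand side $R(\gamma_l):={n-a_1\choose n-k}+\ldots+{n-a_{s_a}\choose n-s_a}+\gamma_l$. On the other side, Theorem~\ref{thm1} is stated for a real parameter $u\in[3,k]$ with hypothesis $\gamma(\ff)\ge{n-u-1\choose n-k-1}$; since $\gamma(\ff)\ge\gamma_{l-1}+1$, I would choose the (real) value $u$ for which ${n-u-1\choose n-k-1}=\gamma_{l-1}+1$, or more carefully the largest admissible $u$ with ${n-u-1\choose n-k-1}\le\gamma(\ff)$, so that Theorem~\ref{thm1} yields $|\ff|\le{n-1\choose k-1}+{n-u-1\choose n-k-1}-{n-u-1\choose k-1}$. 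The proposition then reduces to the purely numerical claim that $R(\gamma_l)$ is at most this latter quantity, for every resistant $\gamma_l$ and the corresponding optimal choice of $u$.

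The key structural input is the description of $S_{\gamma}$ and $T_{\gamma}$: since $S_\gamma\cup T_\gamma=[2,b_{s_b}]$ with $S_\gamma\cap T_\gamma=\{b_{s_b}\}$, the sum $\sum_i{n-a_i\choose n-k}+\ldots$ together with $\gamma_l=\sum_j{n-b_j\choose \cdot}$ can be recombined. The cleanest route is to recognize that the right-hand side of \eqref{eqfull1} is precisely the size of the extremal family $\mathcal H$ achieving equality (the family exhibited in the "Moreover" part of Theorem~\ref{thmfull1}), and that this same family has diversity exactly $\gamma_l$; hence it is a legitimate competitor in Theorem~\ref{thm1} with parameter $u$ chosen so that ${n-u-1\choose n-k-1}=\gamma_l$ if $\gamma_l$ happens to be of that single-term form, and more generally bounded above by the Theorem~\ref{thm1} bound because Theorem~\ref{thm1} holds for \emph{all} intersecting families with that diversity. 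In other words: apply Theorem~\ref{thm1} to the extremal family $\mathcal H$ itself. Since $\mathcal H$ is intersecting with $\gamma(\mathcal H)=\gamma_l\ge{n-u-1\choose n-k-1}$ for a suitable real $u\in[3,k]$ (here one needs $\gamma_l\le{n-4\choose k-3}$, which is exactly the hypothesis of the proposition, to guarantee $u\ge 3$), Theorem~\ref{thm1} gives $|\mathcal H|\le{n-1\choose k-1}+{n-u-1\choose n-k-1}-{n-u-1\choose k-1}$; but $|\mathcal H|=R(\gamma_l)$, so $R(\gamma_l)$ is bounded by the Theorem~\ref{thm1} quantity, and since the same $u$ is a valid choice for any $\ff$ with $\gamma(\ff)\le\gamma_l$ satisfying $\gamma(\ff)\ge{n-u-1\choose n-k-1}$, the Theorem~\ref{thmfull1} bound $R(\gamma_l)$ is never weaker.

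I would therefore organize the proof as: (i) recall that the right side of \eqref{eqfull1} equals $|\mathcal H|$ for the extremal family $\mathcal H=\mathcal H_{\gamma_l}$ from the sharpness part of Theorem~\ref{thmfull1}, and that $\gamma(\mathcal H)=\gamma_l$; (ii) choose the real parameter $u\in[3,k]$ with ${n-u-1\choose n-k-1}=\gamma_l$ (this is possible, with $u\ge 3$, precisely because $\gamma_l\le{n-4\choose k-3}$, i.e.\ $n-u-1\le n-4$); (iii) apply Theorem~\ref{thm1} to $\mathcal H$ with this $u$ to get $R(\gamma_l)=|\mathcal H|\le{n-1\choose k-1}+\gamma_l-{n-u-1\choose k-1}$, which is exactly the Theorem~\ref{thm1} bound for any family of diversity $\ge{n-u-1\choose n-k-1}$; (iv) conclude that for any intersecting $\ff$ with $\gamma(\ff)$ in the relevant range, the bound \eqref{eqfull1} is at least as strong as \eqref{eq01}. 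The one point needing care — and the main obstacle — is when $\gamma_l$ is \emph{not} of the single-term form ${n-u-1\choose n-k-1}$: then one must take $u$ real with ${n-u-1\choose n-k-1}$ equal to the largest lower bound on diversity available from Theorem~\ref{thm1}, and verify monotonicity so that the $\mathcal H$-comparison still pins $R(\gamma_l)$ below the Theorem~\ref{thm1} value; this is where the "strictly decreasing" clause of Theorem~\ref{thmfull1} and the exact bookkeeping of the cascade form of $\gamma_l$ get used. I expect this monotonicity/boundary check to be the only genuinely technical step; everything else is immediate from plugging the extremal family into the weaker theorem.
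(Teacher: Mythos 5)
Your proposal is essentially the paper's own argument: apply Theorem~\ref{thm1} to the extremal family $\mathcal H$ that achieves equality in \eqref{eqfull1} at diversity $\gamma_l$, conclude $R(\gamma_l)=|\mathcal H|$ is bounded above by the Theorem~\ref{thm1} quantity, and then use the monotonicity of the Theorem~\ref{thm1} bound (as a decreasing function of $\gamma(\ff)$, with $u$ chosen optimally) to cover the whole range $\gamma_{l-1}<\gamma(\ff)\le\gamma_l$. One small correction: the monotonicity you need at the end is that of the right-hand side of \eqref{eq01} as $\gamma(\ff)$ increases (and the optimal $u$ decreases) — which the paper states explicitly — not the "strictly decreasing" clause of Theorem~\ref{thmfull1}; and the case of $\gamma_l$ not being a single binomial coefficient poses no real obstacle, since for real $u\in[3,k]$ the map $u\mapsto{n-u-1\choose n-k-1}$ continuously covers $[1,{n-4\choose k-3}]$.
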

\begin{proof} Let us first compare the statement of Theorem~\ref{thmfull1} with the statement of Theorem~\ref{thm1} for $\gamma_l := {n-u-1\choose n-k-1}$ with integer $u$. Such $\gamma_l$ is resistant for any $u\in [3,k]$ (note that ${n-4\choose k-3}$ is resistant due to the exceptional condition 3 in the definition). Moreover, it is not difficult to see that, if we substitute such $\gamma_l$ in \eqref{eqfull1}, then we will get the bound $$|\ff|\le {n-2\choose n-k}+\ldots +{n-u-1\choose n-k-u+1}+\gamma_l = {n-1\choose n-k}-{n-u-1\choose n-k-u}+{n-u-1\choose n-k-1},$$ which is exactly the bound \eqref{eq01}. However, we are getting it here in more relaxed assumptions: while we know that this bound is sharp for $\gamma(\ff) = \gamma_l$, Theorem~\ref{thmfull1} also tells us that for $\gamma(\ff)>\gamma_l$ the bound would be strictly worse (and provides us with a possibility to extract, how much worse). Moreover, even if $\gamma_{l-1}<\gamma(\ff)\le \gamma_l$, we are still getting the same upper bound.

Moving to the proof of the proposition, the function in the right hand side of \eqref{eq01} monotone decreasing as $\gamma(\ff)$ increases (and thus $u$ decreases). Therefore, to show that the bound \eqref{eqfull1} is stronger than \eqref{eq01}, it is sufficient to show it for values $\gamma_l$, $l=0,\ldots, m$. But for each of these values the bound \eqref{eqfull1} is sharp, so \eqref{eq01} can be only weaker than \eqref{eqfull1} for these values.

Thus, clearly, Theorem~\ref{thmfull1} is a strengthening of Theorem~\ref{thm1}.
\end{proof}

As a matter of fact, we can replace the bound in \eqref{eq01} with {\it any} monotone decreasing function of $\gamma(\ff)$, provided that the bound holds for each $\gamma_l$. \\

Finally, let us mention that we state Theorem~\ref{thmfull1} only for $\gamma(\ff)\le {n-4\choose k-3}$, since Theorem~\ref{thm1} already gives us the bound $|\ff|\le {n-2\choose k-2}+2{n-3\choose k-2}$ if $\gamma(\ff)\ge {n-4\choose k-3}$, and we cannot get any better bound in this range. However, we are going to analyze the cases when the equality in the inequality for $|\ff|$ above can be attained. It is worth mentioning that the intersecting family $\{F\in {[n]\choose k}: |F\cap [3]|\ge 2\}$ attains the bound above on the cardinality, and has diversity ${n-3\choose k-2}$. We also note that in a recent work \cite{Kup21} the author managed to prove  that for $n>ck$ with some constant independent of $k$ there are no intersecting families $\ff\subset{[n]\choose k}$ with diversity bigger than ${n-3\choose k-2}$.\\

Our next goal is to state the ``conceptual'' version of Theorem~\ref{thmfull1}.
We will need certain preparations. We will use the framework and some of the ideas from \cite{FK1}, as well as from \cite{KZ}. First of all, we pass to the cross-intersecting setting in a standard way: given an intersecting family $\ff$, we consider two families
\begin{align*}
\ff(1):=&\{F\setminus\{1\}:1\in F\in\ff\}\ \ \ \ \ \text{and}\\
\ff(\bar 1):=&\{F:1\notin F\in\ff\},
\end{align*}

and, applying Theorem~\ref{thmHil}, from now on and until the end of the section assume that $\ff(1)=\mathcal L(|\ff(1)|,k-1), \ff(\bar 1) = \mathcal L(|\ff(\bar 1)|, k)$. Note that $\ff(1),\ff(\bar 1)\subset 2^{[2,n]}$. For shorthand we denote $\aaa:=\ff(1), \bb:=\ff(\bar 1)$. In the remaining part dedicated to Theorem~\ref{thmfull1} we will be mostly working with the set $[2,n]$, in order not to confuse the reader and to keep clear the relationship between the diversity of intersecting families and the sizes of pairs of cross-intersecting families.\\

Both $\aaa$ and $\bb$ are determined by their lexicographically last set. In this section we use lexicographical order on $2^{[2,n]}$, which is defined as follows: $A<B$ iff $A\supset B$ or the minimal element of $A\setminus B$ is smaller than the minimal element of $B\setminus A$. Let us recall some notions and results from \cite{FK1} related to the Kruskal-Katona theorem and cross-intersecting families. For a set $S\subset [n]$, $1\in S$ and $|S\cap [2,n]|\le a$, we define
$$\mathcal L(S,a):=\big\{A\in {[2,n]\choose a}:A<S\cap[2,n]\big\}.$$
For example, the family $\{G\in{[2,n]\choose 10}: 2\in G, G\cap\{3,4\}\ne \emptyset\}$ is the same as the family $\mathcal L(S,10)$ for $S=\{2,4\}$. If $\G=\mathcal L(S,a)$ for a certain set $S$, then we say that $S$ is the {\it characteristic set} of $\G$.
Note that, for convenience, we assume that $1\in S$ (motivated by the fact that $S$ is the characteristic set for the subfamily of all sets containing 1 in the original family), while $T\subset [2,n]$.

We say that two sets $S\subset [n]$ and $T\subset [2,n]$  \underline{form a resistant pair}, if the following holds. Assuming that the largest element of $T$ is $j$, we have
\begin{enumerate}
  \item $S\cap T = \{j\},\ S\cup T = [j]$, $|S|\le k,$ $|T|\le k$;
  \item for each $4\le i\le j$ we have $|[i]\cap S|< |[i]\setminus S|$ (this condition, roughly speaking, says that in each $[i]$ there are more elements in $T$ than in $S$);
  \item For convenience, we include the pair $T =\{2,3,4\}, S = \{1,4\}$ in the list of resistant pairs.
\end{enumerate}
Note that 2 implies that $T\supset \{2,3,4\}$ for each resistant pair. There is a close relationship between this notion and the notion of a resistant number, which we discuss a bit later. Let us first give the ``conceptual'' characteristic set version of Theorem~\ref{thmfull1}. For convenience, we put $T_0 = [2,n]$ to correspond to the empty family.

\begin{thm}\label{thmfull2} Let $n>2k\ge6$. Consider all resistant pairs $S_l\subset [n],\ T_l\subset [2,n]$, where $l\in [m]$. Assume that $T_0<T_1<T_2<\ldots <T_m$. Then
\begin{equation}\label{eqfull2} |\mathcal L(S_{l-1},k-1)|+|\mathcal L(T_{l-1},k)|>|\mathcal L(S_{l},k-1)|+|\mathcal L(T_{l},k)| \ \ \ \text{ for each }l \in [m],\end{equation}
and any cross-intersecting pair of families $\mathcal A\subset {[2,n]\choose k-1},\ \mathcal B\subset {[2,n]\choose k}$ with $|\mathcal L(T_{l-1},k)|<|\bb|\le |\mathcal L(T_l,k)|$ satisfies \begin{equation}\label{eqfull3} |\aaa|+|\bb|\le |\mathcal L(S_l,k-1)|+|\mathcal L(T_l, k)|.\end{equation}

In terms of intersecting families, if $\ff\subset {[n]\choose k}$ is intersecting and $|\mathcal L(T_{l-1},k)|<\gamma(\ff)\le |\mathcal L(T_l,k)|$, then $|\ff|\le |\mathcal L(S_l,k-1)|+|\mathcal L(T_l, k)|$.
\end{thm}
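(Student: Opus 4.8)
The plan is to reduce everything to a statement about pairs of cross-intersecting families via the standard ``splitting at the element~$1$'' reduction already set up in the excerpt, and then to carry out an induction on the diversity (equivalently, on the position $l$ of the relevant resistant pair), where the inductive step is powered by Theorem~\ref{thmHil}. Concretely, given an intersecting $\ff\subset{[n]\choose k}$, we pass to $\aaa=\ff(1)$ and $\bb=\ff(\bar 1)$; these are cross-intersecting families on $[2,n]$ with $|\ff|=|\aaa|+|\bb|$ and $\gamma(\ff)=|\bb|$ (once we have shifted so that $1$ is a most popular element, which we may assume). By Theorem~\ref{thmHil} we may further assume $\aaa=\mathcal L(|\aaa|,k-1)$ and $\bb=\mathcal L(|\bb|,k)$, so both families are determined by their characteristic sets $S$ and $T$ respectively. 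Thus the third (intersecting) statement of Theorem~\ref{thmfull2} follows from the second (cross-intersecting) one, and the second is what we must prove: among all cross-intersecting lex pairs with $|\bb|$ in a prescribed range, the sum $|\aaa|+|\bb|$ is maximized at the resistant pair $(S_l,T_l)$.

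The heart of the argument is the following ``local move'' lemma, which I would isolate first: if $(\aaa,\bb)=(\mathcal L(S,k-1),\mathcal L(T,k))$ is a cross-intersecting lex pair that is \emph{extremal} for its value of $|\bb|$ (i.e.\ $\aaa$ is as large as cross-intersection with $\bb$ allows, so $S$ is the lex-last set with $\mathcal L(S,k-1)$ cross-intersecting $\bb$), then $S\cap T$ and $S\cup T$ have the structure described in the definition of a resistant pair \emph{unless} one can strictly increase $|\aaa|+|\bb|$ by a small decrease of $T$ (which increases $\bb$'s complementary freedom and hence $S$). This is where the conditions in the definition of a resistant pair come from: condition~1, $S\cap T=\{j\}$ and $S\cup T=[j]$, is forced because $S$ is determined by $T$ (the lex-last $(k-1)$-set cross-intersecting $\mathcal L(T,k)$ is read off directly from $T$); condition~2, $|[i]\cap S|<|[i]\setminus S|=|[i]\cap T\setminus\{?\}|$ for $4\le i\le j$, is precisely the inequality that guarantees shrinking $T$ at level $i$ does not pay off. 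So the extremal pairs are exactly the resistant pairs, and as $|\bb|$ ranges over $(|\mathcal L(T_{l-1},k)|,|\mathcal L(T_l,k)|]$ the optimum is attained at $T=T_l$ (the largest admissible $T$, since within a ``resistance interval'' enlarging $\bb$ while keeping $\aaa$ fixed only helps, and $\aaa$ can stay fixed until the next resistant threshold). The strict monotonicity~\eqref{eqfull2} is then a separate computation: moving from the resistant pair $(S_{l-1},T_{l-1})$ to $(S_l,T_l)$, one writes the change in $|\aaa|+|\bb|$ as a telescoping sum of binomial coefficients and checks it is negative using condition~2 together with $n>2k$; this is the ``computation-free'' part the author promises, presumably phrased as a bijective/injective comparison of the two lex families rather than as binomial arithmetic.

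I would organize the write-up as: (i) the reduction to cross-intersecting lex pairs (one short paragraph, citing Theorem~\ref{thmHil}); (ii) the observation that for fixed $\bb$ the extremal $\aaa$ has characteristic set $S$ explicitly determined by $T$, making $|\aaa|+|\bb|$ a function of $T$ alone; (iii) the key lemma that a local maximum of this function over the admissible $T$'s forces $(S,T)$ resistant, proved by exhibiting the improving move whenever condition~2 fails; (iv) the conclusion that within each interval $(|\mathcal L(T_{l-1},k)|,|\mathcal L(T_l,k)|]$ the maximum is $|\mathcal L(S_l,k-1)|+|\mathcal L(T_l,k)|$; and (v) the verification of~\eqref{eqfull2}. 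The main obstacle I anticipate is step~(iii): making the ``improving move'' fully precise requires a careful case analysis of how the lex-last cross-intersecting $S$ responds when $T$ is decreased by one step in the lex order on ${[2,n]\choose k}$ — one must track how removing the top of $T$ shifts several layers of $S$ simultaneously, and show the net gain in $|\aaa|$ strictly exceeds the loss in $|\bb|$ exactly when condition~2 is violated, while the two sides exactly balance (no improvement) when $(S,T)$ is resistant. A secondary subtlety is handling the boundary/exceptional conventions ($T_0=[2,n]$, the forced inclusion of the pair $T=\{2,3,4\},\,S=\{1,4\}$, and the degenerate small cases near $i=4$), which must be checked to match the exceptional clause~3 in the definition and the base case of the induction; I would dispatch these at the very end once the generic machinery is in place.
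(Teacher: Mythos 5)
Your high-level strategy matches the paper's: reduce to lexicographic cross-intersecting pairs via Theorem~\ref{thmHil}, use the maximal-pair characterization (Proposition~\ref{cross2}) so that $\aaa$ is determined by $T$, show that non-resistant pairs can be improved, and deduce both the interval bound~\eqref{eqfull3} and the strict monotonicity~\eqref{eqfull2}. Your step~(ii) and the deduction of the intersecting statement from the cross-intersecting one are exactly as in the paper, and your identification of where the conditions in the definition of a resistant pair come from is correct in spirit.

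However, the crucial technical device that powers both of your steps~(iii) and~(v) is absent, and this is a genuine gap. The paper does not ``decrease $T$ by one step in lex order and track binomial telescoping,'' as you anticipate; it instead makes a larger jump (replacing $T$ with $T':=[i]\setminus S$ at the first index $i$ where resistance fails, which can delete many elements of $T$ at once) and compares the two cross-intersecting pairs by the following trick (Lemma~\ref{lemdiv}): set $\mathcal P_a:=\{P\in\binom{[2,n]}{k-1}: P\cap[i]=[2,i]\cap S\}$ and $\mathcal P_b:=\{P\in\binom{[2,n]}{k}: P\cap[i]=[i]\setminus S\}$, note that $\aaa\setminus\aaa'=\aaa\cap\mathcal P_a$ and $\bb'\setminus\bb=\mathcal P_b\setminus\bb$, and observe that the bipartite graph on $\mathcal P_a\sqcup\mathcal P_b$ with edges between disjoint sets is biregular, so its largest independent set is one of the two parts. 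Cross-intersection forces $(\aaa\cap\mathcal P_a)\cup(\bb\cap\mathcal P_b)$ to be independent, and the failure of condition~2 forces $|\mathcal P_b|\ge|\mathcal P_a|$, yielding $|\aaa'|+|\bb'|\ge|\aaa|+|\bb|$ with no binomial arithmetic at all. The same device, run in two cases ($T'\not\supseteq T$ and $T'\supseteq T$), proves the strict monotonicity \eqref{eqfull2} across resistant pairs (Lemma~\ref{lemres}); this is not ``a separate computation'' but a repetition of the same bipartite-graph comparison. Without this independent-set-in-a-biregular-graph idea, the ``net gain in $|\aaa|$ versus loss in $|\bb|$'' comparison you flag as the main obstacle remains unresolved, and a direct telescoping would not be computation-free. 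A secondary inaccuracy: your claim that within a resistance interval one can ``enlarge $\bb$ while keeping $\aaa$ fixed'' is not justified — enlarging $\bb$ generically destroys cross-intersection; the correct statement is that Lemma~\ref{lemdiv} lets one push $T$ lex-forward to the next resistant pair without decreasing $|\aaa|+|\bb|$, and then Lemma~\ref{lemres} compares that resistant pair to $(S_l,T_l)$.
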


First of all, we remark that the intersecting part is clearly equivalent to the second statement of the cross-intersecting part. Second, Proposition~\ref{prop9} below shows that the families $L(S_l,k-1)$ and $L(T_{l},k)$ are cross-intersecting and thus \eqref{eqfull3} is sharp. Now let us deduce Theorem~\ref{thmfull1} from Theorem~\ref{thmfull2}.

\begin{proof}[Reduction of Theorem~\ref{thmfull1} to Theorem~\ref{thmfull2}]
 For any set $T\in [2,n]$ with the largest element $s_b$ we can compute the size of the family $|\mathcal L(T, k-1)$ as follows. Find the first element $b_1\in [2,n]$, which is missing from $T$, and consider the family with characteristic set $T_1:=(T\cap [b_1])\cup \{b_1\}$. The size of this family is ${n-b_1\choose k-b_1+1} = {n-b_1\choose n-k-1}$, since actually $T_1 = [2,b_1]$. Since $T_1<T$, this family is a subfamily of  $\mathcal L(T, k-1)$. At each new step we find the next (not found yet) element $b_i$, which is missing from $T$, the set $T_i:=(T\cap [b_i])\cup b_i$, and count the sets which belong to $\mathcal L(T_i,k-1)\setminus L(T_{i-1},k-1)=\big\{F\in {[2,n]\choose k}:F\cap [b_i]=T_i\big\}$. Their number is precisely ${n-b_i\choose k-|T_i|} = {n-b_i\choose n-k-|[b_i]\setminus T_i|}$. But since we are stopping at every element that is not included in $T$, we get that $|[b_i]\setminus T_i| = |[b_{i-1}\setminus T_{i-1}|+1 = \ldots = i$. Therefore, the last binomial coefficient is ${n-b_i\choose n-k-i}$. At some point $T_i = T$, and we stop the procedure, including the sets $F\in {[2,n]\choose k-1}$ that satisfy $F\cap [s_b] = T$. We get that
$$|\mathcal L(T,k-1)| = {n-b_1\choose n-k-1}+{n-b_2\choose n-k-2}+\ldots +{n-b_{s_b}\choose n-s_b-1},$$
the $(n-k-1)$-cascade form! Moreover, we know that the set $\{b_1,\ldots, b_{s_b}\}$ is exactly the set $T_{\gamma}$ from before the definition of a resistant number, and we have $T_{\gamma} = ([2,s_b]\setminus T)\cup \{s_b\}$ and thus $T = S_{\gamma}$.

Therefore, if $S,T$ is a resistant pair, then, representing $\gamma:=|\mathcal L(T,k-1)|$ in an $(n-k-1)$-cascade form, we get that $T = S_{\gamma}$ and $S\cap [2,n] = T_{\gamma}$. This immediately shows that $T_{\gamma}$ and $S_{\gamma}$ satisfy condition 1 of the definition of a resistant number. The implication in the other direction follows in the same way.  Condition 2 of the definition of a resistant pair is equivalent to the statement that for each $i$ $1+|T_{\gamma}\cap [i]|<|[2,i]\setminus T_{\gamma}|$, which is, in turn, the same as saying $b_i>2i+2$. Finally, it is clear that $\gamma = {n-4\choose k-3}$ correspond to the characteristic set $\{2,3,4\}$.

We conclude that $T_l,S_l$ form a resistant pair if and only if $|\mathcal L(T_l,k-1)|$ is a resistant number. Doing calculations as above, one can conclude that
$$|\mathcal L(S_l,k)| = {n-a_1\choose n-k}+{n-a_{2}\choose n-k-1}+\ldots +{n-a_{s_a}\choose n-s_a}.$$

Given that, it is clear that the inequality \eqref{eqfull2} is equivalent to the statement saying that the right hand side of \eqref{eqfull1} is strictly monotone, and that the \eqref{eqfull1} is equivalent to \eqref{eqfull3}.

Finally, the sharpness claimed in Theorem~\ref{thmfull1} follows right away from the fact that the inequality \eqref{eqfull3} in Theorem~\ref{thmfull2} is expressed in terms of families. That is, the pair $\mathcal L(S_l,k-1)$ and $\mathcal L(T_l,k)$ provides us with such an example.
\end{proof}
\vskip+0.2cm

Before proving Theorem~\ref{thmfull2}, let us first shed some light on pairs of cross-intersecting lexicographic families.
We say that two sets $S$ and $T$ in $[2,n]$ \underline{strongly intersect}, if there exists a positive integer $j$, such that $S\cap T\cap[2,j]=\{j\}$ and $S\cup T\supset [2,j]$.
 The following easy proposition was proven in \cite{FK1}:

\begin{prop}[\cite{FK1}]\label{prop9} Let $A$ and $B$ be subsets of $[2,n]$, $|A|\le a, |B|\le b$, and $|[2,n]|=n-1\ge a+b$. Then $\mathcal L(A,a)$ and $\mathcal L(B,b)$ are cross-intersecting iff $A$ and $B$ strongly intersect.
\end{prop}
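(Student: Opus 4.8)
The plan is to prove both directions by directly unpacking the definitions of the lexicographic family $\mathcal L(\cdot,\cdot)$ and of strong intersection, and to reduce everything to a single clean observation: two lex families $\mathcal L(A,a)$ and $\mathcal L(B,b)$ fail to be cross-intersecting precisely when one can find $X\in{[2,n]\choose a}$ with $X\le A$ (in the order on $2^{[2,n]}$ used here) and $Y\in{[2,n]\choose b}$ with $Y\le B$ and $X\cap Y=\emptyset$; so the statement is equivalent to: \emph{such a disjoint pair exists iff $A$ and $B$ do not strongly intersect}. Recall that in the order used in this section $X\le A$ means $X\supset A$ or $\min(X\setminus A)<\min(A\setminus X)$; the important consequence I will use repeatedly is that the lex-smallest $a$-set below a prescribed ``prefix condition'' is obtained by keeping an initial segment of $A$ and then filling up greedily with the largest available elements, i.e. with $\{n-a',\dots,n\}$-type tails.

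First I would prove the contrapositive of the ``only if'' direction: assume $A$ and $B$ do \emph{not} strongly intersect, and exhibit disjoint $X\le A$, $Y\le B$. Let $j$ be the smallest index (if any) with $j\in A\cap B$; if no such $j$ exists then $A\cap B=\emptyset$ already and taking $X=A$ (padded to size $a$ with a large tail disjoint from $B$, possible since $n-1\ge a+b$), $Y=B$ works. Otherwise, non-strong-intersection at this first common element $j$ means $A\cup B\not\supset[2,j]$, so some $i\in[2,j-1]$ lies in neither $A$ nor $B$. Then I build $X$ by taking $A\cap[2,i-1]$, inserting $i$, and padding with a large tail; this $X$ satisfies $X\le A$ because $\min(X\setminus A)=i<\min(A\setminus X)$ (every element of $A$ below $i$ was kept, and $j\in A\setminus X$ with $j>i$). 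Symmetrically, build $Y$ from $B\cap[2,i-1]$, insert $i$... but that collides at $i$. So instead I put $i$ into $X$ only and, for $Y$, observe that $B$ itself together with a large tail already avoids $X$ on the prefix $[2,i]$ except possibly sharing elements of $B\cap[2,i-1]$ with $A\cap[2,i-1]$ — so the correct construction is to choose the tails of $X$ and $Y$ disjointly (using $n-1\ge a+b$) and to handle the common prefix $[2,i-1]$ by noting $A$ and $B$ need not be disjoint there; the cleaner route, which I would actually take, is to let $j'$ be the first index where $[2,n]\setminus(A\cup B)$ is nonempty, set $X=(A\cap[2,j'-1])\cup\{$large tail disjoint from $B\}$ and $Y=(B\cap[2,j'-1])\cup\{j'\}\cup\{$large tail$\}$, and check directly that $X\cap Y\subseteq A\cap B\cap[2,j'-1]=\emptyset$ by minimality of $j$, while $X\le A$ and $Y\le B$. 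I expect this bookkeeping with the two tails to be the main obstacle: one must verify disjointness of $X$ and $Y$ and simultaneously the lex inequalities, and the inequality $n-1\ge a+b$ is exactly what guarantees enough room in the tails.

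For the ``if'' direction, assume $A$ and $B$ strongly intersect via $j$, so $A\cap B\cap[2,j]=\{j\}$ and $A\cup B\supseteq[2,j]$; I must show every $X\le A$ of size $a$ meets every $Y\le B$ of size $b$. Take such $X,Y$. Since $A\cup B\supseteq[2,j]$, partition $[2,j]$ (minus $\{j\}$) into $A$-part and $B$-part. If $X\cap Y=\emptyset$, then in particular $X$ omits $B\cap[2,j-1]$ and $Y$ omits $A\cap[2,j-1]$; I claim $X\le A$ forces $X\supseteq A\cap[2,j-1]$ unless $X$ already drops below $A$ at some element $<$ all of $A\setminus X$, which would put into $X$ an element of $[2,j-1]\setminus A\subseteq B$, contradicting disjointness from $Y\ni$ that same element once I show $Y$ is forced to contain it too — and here the symmetric argument for $Y$ closes the loop: tracing the first index where $X$ deviates from $A$ and where $Y$ deviates from $B$, the minimality of $j$ in $A\cap B$ plus $[2,j-1]\subseteq A\cup B$ forces both $X$ and $Y$ to contain the same smallest deviation element (or to agree with $A$, $B$ respectively up through $j-1$, whence both contain $j$), contradiction. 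This direction is essentially already recorded in \cite{FK1}, so I would cite it and only sketch the above; the genuinely new content of the proposition for our purposes is the quantitative ``only if'' construction, so that is where I would spend the effort.
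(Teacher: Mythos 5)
Your high-level plan is sound: unpack the lex order, reduce cross-intersection of $\mathcal L(A,a)$ and $\mathcal L(B,b)$ to the non-existence of a disjoint pair $X\le A$, $Y\le B$, and compare with the combinatorial description of strong intersection in terms of $j_0:=\min(A\cap B)$ and $j':=\min\big([2,n]\setminus(A\cup B)\big)$. Your sketch of the ``if'' direction captures the right mechanism (track the first indices where $X$ and $Y$ deviate from $A$ and $B$), and the paper itself offers no proof of Proposition~\ref{prop9} --- it is imported from~\cite{FK1} --- so there is no paper argument to compare against; what matters here is correctness.

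The genuine gap is in the ``only if'' construction you actually commit to. With $X=(A\cap[2,j'-1])\cup\{\text{large tail disjoint from }B\}$ and $Y=(B\cap[2,j'-1])\cup\{j'\}\cup\{\text{large tail}\}$, the set $X$ is in general \emph{not} in $\mathcal L(A,a)$: the element $j_0>j'$ lies in $A$ and also in $B$, so $j_0$ is excluded from your tail, hence $j_0\in A\setminus X$, while every element of $X\setminus A$ is a large tail element; thus $\min(X\setminus A)>\min(A\setminus X)$ and $X\not\supseteq A$, so $X\not\le A$. Exactly one of the two sets can receive the ``fresh'' element $j'$; the other one must contain its whole characteristic set. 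A correct version is $X:=(A\cap[2,j'-1])\cup\{j'\}\cup T_X$ and $Y:=B\cup T_Y$: then $\min(X\setminus A)=j'<\min(A\setminus X)$ so $X\le A$, and $Y\supseteq B$ so $Y\le B$, while the tails fit because $|T_X|+|T_Y|=(a-|A\cap[2,j'-1]|-1)+(b-|B|)$ and the forbidden set $(A\cap[2,j'-1])\cup\{j'\}\cup B$ has size $|A\cap[2,j'-1]|+1+|B|$, summing to $a+b\le n-1$. Finally, your worry that ``$A$ and $B$ need not be disjoint'' on $[2,j'-1]$ is what led you astray: since $j'<j_0=\min(A\cap B)$, the sets $A$ and $B$ \emph{are} disjoint on $[2,j'-1]$ (indeed they partition it, by minimality of $j'$), and this disjointness is exactly why the prefix parts of $X$ and $Y$ do not clash.
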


We say that $\mathcal A\subset {[2,n]\choose a}$ and $\mathcal B\subset {[2,n]\choose b}$ form a \underline{maximal cross-intersecting pair}, if whenever $\mathcal A'\subset {[2,n]\choose a}$ and $\mathcal B'\subset {[2,n]\choose b}$ are cross-intersecting with $\mathcal A'\supset \mathcal A$ and $\mathcal B'\supset \mathcal B$, then necessarily $\mathcal A = \mathcal A'$ and $\mathcal B = \mathcal B'$ holds.

The following proposition from \cite{FK1} is another important step in our analysis.

\begin{prop}[\cite{FK1}]\label{cross2} Let $a$ and $b$ be positive integers, $a+b\le n-1$. Let $P$ and $Q$ be non-empty subsets of $[2,n]$ with $|P|\le a$, $|Q| \le b$. Suppose that $P$ and $Q$ intersect strongly in their last element. That is, there exists $j$, such that $P\cap Q = \{j\}$ and $P\cup Q = [2,j]$. Then $\mathcal L(P,a)$ and $\mathcal L(Q,b)$ form a maximal pair of cross-intersecting families.

Inversely, if $\mathcal L(m,a)$ and $\mathcal L(r,b)$ form a maximal pair of cross-intersecting families, then it is possible to find sets $P$ and $Q$ such that $\mathcal L(m,a)=\mathcal L(P,a)$, $\mathcal L(r,b)=\mathcal L(Q,b)$ and $P,Q$ satisfy the above condition. \end{prop}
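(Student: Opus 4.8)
The plan is to reduce both directions to a single elementary description of maximality and then to put the real work into the forward direction. For cross-intersecting $\mathcal A\subset{[2,n]\choose a}$ and $\mathcal B\subset{[2,n]\choose b}$, call $\mathcal A$ the \emph{full complement} of $\mathcal B$ if $\mathcal A$ consists of \emph{all} $a$-sets meeting every member of $\mathcal B$. Then $(\mathcal A,\mathcal B)$ is a maximal cross-intersecting pair if and only if $\mathcal A$ is the full complement of $\mathcal B$ and $\mathcal B$ is the full complement of $\mathcal A$: indeed, if $\mathcal A$ is not the full complement of $\mathcal B$ one can adjoin an $a$-set to $\mathcal A$; and if both conditions hold and $\mathcal A'\supseteq\mathcal A$, $\mathcal B'\supseteq\mathcal B$ are cross-intersecting, then $\mathcal A'$ is cross-intersecting with $\mathcal B$, so $\mathcal A'\subseteq\mathcal A$ and hence $\mathcal A'=\mathcal A$, and likewise $\mathcal B'=\mathcal B$. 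Thus both parts of the proposition amount to identifying the full complement of a lexicographic family.

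For the forward implication, note first that ``$P$ and $Q$ intersect strongly in their last element'' is a special case of ``$P$ and $Q$ strongly intersect'' (with the same $j$), so by Proposition~\ref{prop9} the families $\mathcal L(P,a)$ and $\mathcal L(Q,b)$ are cross-intersecting; equivalently, $\mathcal L(P,a)$ is contained in the full complement of $\mathcal L(Q,b)$. The crux is the reverse inclusion, which I would prove as follows. Given an $a$-set $A\notin\mathcal L(P,a)$, so that $A$ lies above $P$ in the lex order, let $B_0(A)$ be the set of the $b$ smallest elements of $[2,n]\setminus A$; this is well defined since $a+b\le n-1$, it is disjoint from $A$, and it is the lex-smallest $b$-set disjoint from $A$. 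Put $p^{*}=\min(P\setminus A)$. The branching description of the lex order gives $A\cap[2,p^{*}-1]=P\cap[2,p^{*}-1]$ and $p^{*}\notin A$; feeding in the identity $[2,j]\setminus P=Q\setminus\{j\}$ (a restatement of $P\cap Q=\{j\}$ and $P\cup Q=[2,j]$) one reads off the small elements of $[2,n]\setminus A$ explicitly: they agree with $Q$ below $p^{*}$, after which either $p^{*}<j$ and the next element is $p^{*}\notin Q$, so $B_0(A)$ branches strictly below $Q$, or $p^{*}=j$ and the elements of $[2,n]\setminus A$ not exceeding $j$ are exactly the set $Q$, so $B_0(A)\supseteq Q$. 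In both cases $B_0(A)\le Q$, that is $B_0(A)\in\mathcal L(Q,b)$. Hence $\mathcal L(P,a)$ is precisely the full complement of $\mathcal L(Q,b)$, and since the hypothesis on $(P,Q)$ is symmetric under interchanging $(P,a)$ and $(Q,b)$, the same computation gives that $\mathcal L(Q,b)$ is the full complement of $\mathcal L(P,a)$; by the reformulation above, the pair is maximal. I expect this lex-order case analysis -- in particular the verification that $B_0(A)\le Q$ -- to be the main obstacle.

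For the converse, let $(\mathcal L(m,a),\mathcal L(r,b))$ be a maximal cross-intersecting pair, both families nonempty (the empty cases being degenerate). Pick \emph{any} sets $P',Q'\subseteq[2,n]$ with $|P'|\le a$, $|Q'|\le b$ and $\mathcal L(P',a)=\mathcal L(m,a)$, $\mathcal L(Q',b)=\mathcal L(r,b)$; such characteristic sets exist since each family is a lexicographic initial segment. As $\mathcal L(P',a)$ and $\mathcal L(Q',b)$ are cross-intersecting, Proposition~\ref{prop9} supplies an integer $j$ with $P'\cap Q'\cap[2,j]=\{j\}$ and $P'\cup Q'\supseteq[2,j]$. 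Set $P:=P'\cap[2,j]$ and $Q:=Q'\cap[2,j]$. Then $P\cap Q=P'\cap Q'\cap[2,j]=\{j\}$, $P\cup Q=(P'\cup Q')\cap[2,j]=[2,j]$, $j$ is the last element of both $P$ and $Q$, and $|P|\le|P'|\le a$, $|Q|\le|Q'|\le b$, so $P$ and $Q$ intersect strongly in their last element. Since $P\subseteq P'$ and supersets come earlier in this order, $\mathcal L(m,a)=\mathcal L(P',a)\subseteq\mathcal L(P,a)$, and similarly $\mathcal L(r,b)\subseteq\mathcal L(Q,b)$. By Proposition~\ref{prop9} the pair $(\mathcal L(P,a),\mathcal L(Q,b))$ is cross-intersecting, and it contains $(\mathcal L(m,a),\mathcal L(r,b))$ component-wise, so maximality of the latter forces $\mathcal L(P,a)=\mathcal L(m,a)$ and $\mathcal L(Q,b)=\mathcal L(r,b)$, which is exactly the desired conclusion. (The forward implication in fact shows that $(\mathcal L(P,a),\mathcal L(Q,b))$ is itself maximal, which makes these last equalities immediate.)
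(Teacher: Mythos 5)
The paper states Proposition~\ref{cross2} as a citation to \cite{FK1} and does not reproduce a proof, so there is no in-paper argument to compare against; I am therefore evaluating your proposal on its own.

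Your proof is correct. The reduction of maximality to the mutual ``full complement'' condition is a clean bookkeeping step and is verified accurately. In the forward direction, the construction of $B_0(A)$ as the $b$ smallest elements of $[2,n]\setminus A$ is well defined thanks to $a+b\le n-1$, and the case split on $p^{*}=\min(P\setminus A)$ is carried out correctly: when $p^{*}<j$, the elements of $[2,n]\setminus A$ below $p^{*}$ are exactly $Q\cap[2,p^{*}-1]$, the bound $|Q\cap[2,p^{*}-1]|\le|Q|-1\le b-1$ guarantees $p^{*}\in B_0(A)$, and since $p^{*}\notin Q$ this forces $B_0(A)<Q$; when $p^{*}=j$, the argument shows $Q\subseteq B_0(A)$, which again gives $B_0(A)<Q$ under the paper's convention that supersets precede. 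Either way $B_0(A)\in\mathcal L(Q,b)$ is disjoint from $A$, so $A$ is outside the full complement, and by the (genuine) symmetry of the hypothesis in $(P,a)\leftrightarrow(Q,b)$ the pair is maximal. The converse, passing from any characteristic sets $P',Q'$ via the $j$ from Proposition~\ref{prop9} to $P:=P'\cap[2,j]$, $Q:=Q'\cap[2,j]$ and then squeezing with maximality, is also correct. The only loose thread is the degenerate case where one of the families is empty (so no non-empty characteristic set of size at most $a$ can produce it); you flag this and it is the same implicit restriction present in the statement itself, so it is not a gap worth pressing.
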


We note that it may be helpful to interpret the strong intersection property, as well as the lexicographical order etc. in terms of $\{0,1\}$-vectors: 1 on the $i$-th position if $i$ is contained in the corresponding set, and $0$ otherwise.\\

Now we pass on to the proof of the cross-intersecting version of Theorem~\ref{thmfull2}. Fix a cross-intersecting pair of families $
\aaa,\bb$ as in the theorem.  There are three important reduction steps, which restrict the class of cross-intersecting families which we need to consider. First, as we have already mentioned, we assume that $\aaa = \mathcal L(S,k-1)$ and $\bb = \mathcal (T,k)$ for some characteristic sets $S, T$.
 Second, in view of Proposition~\ref{cross2}, we may assume that $\aaa = \mathcal L(S,k-1),\ \bb=\mathcal L(T,k)$ for some sets $S,T$ that strongly intersect in their last element.   Note also that $|[2,n]|=n-1\ge k+(k-1)$, so we do not have to worry about this condition in the propositions above.

Recall that we aim to maximize $|\aaa|+|\bb|$ given a lower bound on $|\bb|$. The third reduction step is the following lemma.

\begin{lem}\label{lemdiv} Consider a pair of cross-intersecting families $\aaa\subset {[2,n]\choose k-1},\ \bb\subset {[2,n]\choose k}$. Suppose that $\aaa = \mathcal L(S,k-1),\ \bb=\mathcal L(T,k)$ for some sets $S\subset [n]$, $T\subset [2,n]$ that strongly intersect in their last element $j$.

Assume that $S$ and $T$ do not form a resistant pair, that is, there exists $4\le i\le j$, such that $\big|[i]\setminus S\big|\le \big|S\cap [i]\big| $. Put $T':=[i]\setminus S$ and choose $S'$ so that it strongly intersects with $T'$ in its largest element. Then the families $\aaa'\subset {[2,n]\choose k-1},\ \bb\subset {[2,n]\choose k}$ with characteristic sets $S', T'$ are cross-intersecting and  satisfy $|\aaa'|+|\bb'|\ge |\aaa|+|\bb|$ and $|\bb'|> |\bb|$.

Moreover, if $\big|[i]\setminus S\big|< \big|S\cap [i]\big|$, then we may conclude that $|\aaa'|+|\bb'|> |\aaa|+|\bb|$.
\end{lem}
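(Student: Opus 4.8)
The goal is to replace a non-resistant pair $S,T$ by a new pair $S',T'$ that has at least as large total size $|\aaa|+|\bb|$ but strictly larger $|\bb|$, so that the "extremal point'' $T_\gamma$ of a given diversity level can always be taken resistant. The key combinatorial identities we rely on are exactly those recorded in the reduction of Theorem~\ref{thmfull1} to Theorem~\ref{thmfull2}: for a set $T\subset [2,n]$ with largest element $j$, $|\mathcal L(T,k)|$ is obtained by summing, over the "gaps'' of $T$ in $[2,j]$, binomial coefficients $\binom{n-b_i}{n-k-i}$, and symmetrically for $S$ (with $k$ replaced by $k-1$ and the index shift coming from $|S\cap[i]|$). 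So the whole proof is a bookkeeping argument about how these sums change when we pass from $(S,T)$ to $(S',T')$.

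\textbf{Setup of the step.}
First I would fix the minimal $i$ with $4\le i\le j$ for which $|[i]\setminus S|\le |S\cap[i]|$ (equivalently $|[i]\cap T| \le |S\cap [i]|$, using $S\cup T=[j]$), and set $T'=[i]\setminus S$, choosing $S'$ to strongly intersect $T'$ in its largest element $i$. One checks immediately that $S',T'$ strongly intersect in their last element, so by Proposition~\ref{prop9} the pair $\mathcal L(S',k-1),\mathcal L(T',k)$ is cross-intersecting; this takes care of the cross-intersecting claim. Next, since $T'=[i]\setminus S \supseteq [i]\cap T$ lies, as a set, "before'' $T$ in the sense that $T'\cap[i]$ has strictly fewer elements than $T\cap [i]$ while agreeing with $T$ below the first relevant gap — more precisely $T'\le T$ in the lex order of the excerpt because $T'$ is obtained from $T$ by deleting elements of $[i]$ — we get $T'< T$ and hence $|\bb'|=|\mathcal L(T',k)| > |\mathcal L(T,k)|=|\bb|$. (Strictness here is clear: $T\ne T'$ since $j\ge i$ and $j\in T\setminus T'$ in the case $j>i$, and if $j=i$ then the deletion is still nontrivial because $|T\cap[i]|>|T'\cap[i]|$; in all cases the families are distinct and nested.)

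\textbf{The size comparison --- the main obstacle.}
The substantive step is $|\aaa'|+|\bb'|\ge |\aaa|+|\bb|$, i.e. $|\mathcal L(S',k-1)|-|\mathcal L(S,k-1)| \ge |\mathcal L(T,k)|-|\mathcal L(T',k)|$. I would prove this by comparing the cascade expansions term by term on the interval $[2,i]$, which is the only place $S$ and $S'$ (resp. $T$ and $T'$) can differ from each other — note $S'$ and $T'$ are supported in $[i]$, and above $i$ both $S,T$ continue while $S',T'$ stop. The decrease $|\mathcal L(T,k)|-|\mathcal L(T',k)|$ is a sum of binomial coefficients $\binom{n-c}{n-k-r}$ indexed by the elements of $T\cap[2,i]$ that are "extra'' relative to $T'$ together with all of $T$ above $i$; the increase $|\mathcal L(S',k-1)|-|\mathcal L(S,k-1)|$ is a sum indexed by the gaps of $S$ in $[2,i]$, and the inequality $|[i]\setminus S|\le|S\cap[i]|$ is precisely what guarantees there are enough such gaps, with small enough indices, to dominate. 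Concretely I expect to match each decreasing term against an increasing term with an equal-or-smaller "numerator'' $n-c$ and equal-or-larger "binomial weight,'' using the monotonicity of $\binom{n-c}{n-k-r}$ in both parameters in the relevant range (which holds because $n>2k$). In the strict case $|[i]\setminus S|<|S\cap[i]|$ there is at least one unmatched increasing term, giving the strict conclusion. The bookkeeping — keeping the index shifts $r$ consistent between the $S$-side (shift by $|S\cap[i']|$) and the $T$-side (shift by $|[i']\setminus S|$) as $i'$ runs up to $i$ — is the delicate part and where I would spend the most care; conceptually, though, it is forced, since $S'\cup T'=[i]$ mirrors $S\cup T=[j]$ and the two sums are "complementary'' within $[2,i]$.
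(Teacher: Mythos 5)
Your proposal takes a genuinely different route from the paper, but it stops short of a proof precisely where the paper's argument does its work, so there is a real gap.

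The paper does not compare cascade expansions term by term. Instead it isolates the two ``jump'' families
\[
\mathcal P_a:=\bigl\{P\in \tbinom{[2,n]}{k-1}: P\cap[i]=[2,i]\cap S\bigr\},\qquad
\mathcal P_b:=\bigl\{P\in \tbinom{[2,n]}{k}: P\cap[i]=[i]\setminus S\bigr\},
\]
shows (using that $S'$ and $S\cap[i]$ are \emph{consecutive} in lex on $[i]$, and similarly for $T$) that $\aaa\setminus\mathcal P_a=\aaa'$ and $\bb\cup\mathcal P_b=\bb'$, and then forms the bipartite graph $G$ on $\mathcal P_a\cup\mathcal P_b$ with edges between disjoint pairs. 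Because $\aaa,\bb$ are cross-intersecting, $(\aaa\cap\mathcal P_a)\cup(\bb\cap\mathcal P_b)$ is independent in $G$; because $G$ is biregular, its largest independent set is one of the parts; and $|\mathcal P_b|=\binom{n-i}{s_b}\ge\binom{n-i}{s_a}=|\mathcal P_a|$ follows from $s_b\ge s_a$ (your hypothesis) together with $s_a+s_b=2k-i\le n-i$. This yields $|\aaa'|+|\bb'|-(|\aaa|+|\bb|)=|\mathcal P_b|-|\aaa\cap\mathcal P_a|-|\bb\cap\mathcal P_b|\ge 0$ in one step, with strictness when $s_b>s_a$ because then $\mathcal P_b$ is the \emph{unique} maximum independent set. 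The cross-intersecting hypothesis is used essentially, to get the independence; it is not merely decoration.

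Your plan replaces this with a term-by-term matching of binomial coefficients in the $(n-k-1)$-cascade forms and says you ``expect to match each decreasing term against an increasing term.'' That matching is not set up, and I don't see how it would go: on passing from $T$ to $T'$, you delete all elements of $T$ in $(i,j]$ as well, and those contribute terms to $|\mathcal L(T,k)|$ that have no natural partners among the ``gaps of $S$ in $[2,i]$''. The paper avoids this entirely by collapsing the whole difference into the two uniform slices $\mathcal P_a,\mathcal P_b$ and invoking the independent-set bound, so there is nothing to match. You would need to reproduce this reduction (or something equivalent) for the arithmetic to close, and as written it doesn't.

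Two smaller points. First, your lex-order bookkeeping is backwards: with the paper's convention ($A<B$ iff $A\supset B$ or $\min(A\setminus B)<\min(B\setminus A)$) and $T'\subsetneq T$, one has $T<T'$, not $T'<T$, which is exactly why $\mathcal L(T',k)\supsetneq\mathcal L(T,k)$ and $|\bb'|>|\bb|$; you reach the right conclusion but via the wrong inequality. Second, you describe $|\mathcal L(S',k-1)|-|\mathcal L(S,k-1)|$ as ``the increase'' — it is a decrease, since $\aaa'\subsetneq\aaa$; this sign slip propagates into your description of what has to be matched against what.
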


\begin{proof}[Proof of Lemma~\ref{lemdiv}] First, recall that $1\in S$. Since $S'$ and $T'$ are strongly intersecting, the families $\aaa'$, $\bb'$ are cross-intersecting. Next, clearly, $T'\subsetneq T$ and thus $\bb'\supsetneq \bb$. Therefore, we only have to prove $|\aaa|+|\bb|\le |\aaa'|+|\bb'|$.

Consider the following families:
\begin{align*}
\mathcal P_a:=&\big\{P\in {[2,n]\choose k-1}: P\cap [i] = [2,i]\cap S\big\},\\
\mathcal P_b:=&\big\{P\in {[2,n]\choose k}: P\cap [i] = [i]\setminus S\big\}.
\end{align*}
Most importantly, we have $\aaa\setminus \mathcal P_a = \aaa',$ $\bb\cup \mathcal P_b = \bb'$. Let us show, e.g., the first equality.  We have $S'< S<S\cap [i]$, therefore $\aaa'\subset \aaa\subset \mathcal L(S\cap [i],k-1)$. On the other hand, we claim that $S'$ and $S\cap [i]$ are two consecutive sets in the lexicographic order on $[i]$. Indeed, assume that the largest element of $T'$ is $j'$.
If $j' = i$, then $S'\supset S\cap [i]$, $(S'\setminus S)\cap [i] = \{i\}$, which proves it in this case. If $j'<i$, then $[j'+1,i]\subset S\cap [i]$, $j'\notin S\cap [i]$. It is easy to see that the set that precedes $S\cap [i]$ in the lexicographic order on $[i]$ ``replaces'' $[j'+1,i]$ with $\{j'\}$, that is, it is $S'$. Therefore,
$$\aaa \setminus \aaa' \subset \mathcal L(S\cap [i],k-1)\setminus \aaa' = \mathcal L(S\cap [i],k-1)\setminus \mathcal L(S',k-1) = \mathcal P_a,$$ which, together with the fact that $\mathcal P_a$ and $\aaa'$ are disjoint, is equivalent to the equality we aimed to prove.

Next, consider a bipartite graph $G$ with parts $\mathcal P_a,\mathcal P_b$ and edges connecting disjoint sets. Then, due to the fact that $\aaa$ and $\bb$ are cross-intersecting, $(\aaa\cap \mathcal P_a)\cup (\bb\cap \mathcal P_b)$ is an independent set in $G$.

The graph $G$ is biregular, and therefore the largest independent set in $G$ is one of its parts. We have $|\mathcal P_a| = {n-i\choose s_a}$, $|\mathcal P_b| = {n-i\choose s_b}$, where $s_a = k-|[i]\cap S|,\ s_b = k-|[i]\setminus S|$. By the condition from the lemma, we have $s_b\ge s_a$, and, since $n-i\ge s_a+s_b$, we have $|\mathcal P_b|\ge |\mathcal P_a|$. We conclude that $|\mathcal P_b|$ is the largest independent set in $G$, so $|\mathcal P_b|\ge (\aaa\cap \mathcal P_a)\cup (\bb\cap \mathcal P_b),$ and therefore
$$|\aaa'|+|\bb'|-(|\aaa|+|\bb|) = |\mathcal P_b|-|\aaa\cap \mathcal P_a|-|\bb\cap \mathcal P_b|\ge 0.$$
If  $\big|[i]\setminus S\big|< \big|S\cap [i]\big|$, then $|\mathcal P_b|> |\mathcal P_a|$ and $\mathcal P_b$ is the {\it unique} independent set of maximal size in $G$. Thus, we have strict inequality in the displayed formula above.
\end{proof}

Our next goal is to understand how do the resistant pairs behave. More specifically, we aim to show that \eqref{eqfull2} holds: that sizes of resistant cross-intersecting families are increasing as the size of the second family decreases.


\begin{lem}\label{lemres} Consider a resistant pair of cross-intersecting families $\aaa\subset{[2,n]\choose k-1},\ \bb\subset {[2,n]\choose k}$, with characteristic sets $S,T$, respectively, and another such resistant pair $\aaa'\subset{[2,n]\choose k-1},\ \bb'\subset {[2,n]\choose k}$ with characteristic sets $S',T'$. Assume also that $T\ne T'$.

If $T'<T$, then $|\bb'|< |\bb|$ and $|\aaa|+|\bb|<|\aaa'|+|\bb'|$.
\end{lem}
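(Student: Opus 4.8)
The plan is to compare the two resistant pairs by tracking how the characteristic sets $T$ and $T'$ differ, element by element, and to use the monotonicity of binomial coefficients guaranteed by condition~2 in the definition of a resistant pair. First I would recall, from the reduction of Theorem~\ref{thmfull1} to Theorem~\ref{thmfull2}, the explicit formula $|\bb| = |\mathcal L(T,k)| = {n-b_1\choose n-k-1}+\ldots+{n-b_{s_b}\choose n-s_b-1}$, where $\{b_1<\ldots<b_{s_b}\}=([2,j]\setminus S)\cup\{j\}$ with $j=\max T$, and similarly $|\aaa| = |\mathcal L(S,k-1)| = {n-a_1\choose n-k}+\ldots+{n-a_{s_a}\choose n-s_a}$ with $\{a_1<\ldots<a_{s_a}\}=S\cap[2,j]$. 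Since $S\cup T=[j]$ and $S\cap T=\{j\}$, each of $S$ and $T$ is determined by the other inside $[j]$; so a resistant pair is really encoded by the single set $T\subset[2,n]$ (together with the decreasing-degree condition~2). The first claim, $|\bb'|<|\bb|$ when $T'<T$, I would obtain directly: $\mathcal L(T',k)\subsetneq\mathcal L(T,k)$ as soon as $T'<T$ and $T'\ne T$ (lexicographically smaller characteristic set gives a strictly smaller initial segment), provided both are proper families, which holds here.

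The substance is the inequality $|\aaa|+|\bb|<|\aaa'|+|\bb'|$, i.e.\ that passing to a lexicographically smaller $T'$ strictly increases the total size. I would reduce to the case where $T$ and $T'$ are \emph{adjacent} resistant sets — but since the lemma is stated for arbitrary $T'<T$, it is cleaner to prove it for a single "elementary move" and then not even need adjacency: it suffices to show that for any two distinct resistant sets with $T'<T$ the total strictly increases, and this will follow if I show the total is a strictly decreasing function of $T$ along the lex order restricted to resistant sets. The key computation: let $i$ be the smallest index where $T$ and $T'$ "first differ" in the appropriate sense. Because $T'<T$ in the lex order on $2^{[2,n]}$ (where, recall, $A<B$ if $A\supset B$ or $\min(A\setminus B)<\min(B\setminus A)$), and both are resistant, I would argue that $T'$ agrees with $T$ on some prefix $[2,i-1]$, then $T'$ "turns down" — introducing a smaller element or extending — at position $i$. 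I then write $|\aaa|+|\bb|$ and $|\aaa'|+|\bb'|$ splitting each into the contribution from sets meeting $[i]$ in a fixed way versus the rest, exactly as in the proof of Lemma~\ref{lemdiv}: the sets whose trace on $[i]$ is already decided contribute equally, and the difference is governed by two complementary binomial coefficients ${n-i\choose k-|[i]\cap S|}$ versus ${n-i\choose k-|[i]\cap S'|}$ (and the analogous pair at level $k-1$). Condition~2 of the resistant-pair definition — that in every $[i']$ with $4\le i'\le j$ there are strictly more $T$-elements than $S$-elements — is precisely what forces these binomial coefficients to lie on the increasing side of Pascal's triangle (since $n-i\ge k+(k-1)\ge$ the relevant sum), so that the move to $T'$ strictly increases $|\bb|$-part by more than it possibly decreases the $|\aaa|$-part. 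Summing the elementary increments along any chain of resistant sets from $T'$ up to $T$ gives the strict inequality.

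I expect the main obstacle to be bookkeeping: making precise "the first place $T$ and $T'$ differ" in a way compatible with the peculiar lex order on $2^{[2,n]}$ (where supersets come first), and checking that at that position the relevant inequality $|[i]\cap S|<|[i]\setminus S|$ — i.e.\ the strict form of condition~2 — actually holds for at least one of the two pairs so as to get \emph{strict} monotonicity rather than just $\le$. One has to be careful that condition~2 only starts at $i=4$, so the small cases ($j\le 4$, i.e.\ the exceptional pair $T=\{2,3,4\}$) must be handled separately; but since $\{2,3,4\}$ is the lex-largest resistant set, it only ever plays the role of $T$ with $T'$ strictly below it, and there the claim $|\bb'|<|\bb|$ together with a direct two-term comparison finishes it. A clean way to organize the strictness is to invoke the strict part of Lemma~\ref{lemdiv}: although $S,T$ is a resistant pair, one can still exhibit an auxiliary non-resistant set sitting between $T'$ and $T$, apply the strict conclusion of Lemma~\ref{lemdiv} there, and then apply the non-strict conclusion of (a symmetric version of) the same lemma to the remaining steps — but I would first try the direct Pascal-triangle argument above, since the paper emphasizes that the proof is computation-free and the monotonicity of binomials is exactly the "conceptual" content being highlighted.
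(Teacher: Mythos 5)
Your high-level plan --- compare the two characteristic sets, localize the first disagreement, and use a biregular bipartite graph on the trace of $[i]$ to push the binomial inequality forced by condition~2 --- is exactly the method the paper uses, and the first claim ($|\bb'|<|\bb|$) is handled the same way. But there is a real gap in how you propose to organize the second claim. Your heuristic that ``$T'$ agrees with $T$ on a prefix, then turns down at $i$'' does not survive the two very different ways $T'<T$ can arise under this lex order, and the paper has to treat them separately precisely because the \emph{source} of the strict inequality is different in the two cases. If $T'\supseteq T$, the paper inducts on $|T'\setminus T|$, reducing to $|T'\setminus T|=1$ with $T'\setminus T=\{i\}$ being the \emph{last} element of $T'$; here the bipartite graph is built at $i=\max T'$ and the strictness $|[i]\setminus S'|>|[i]\cap S'|$ comes from the resistance of $(S',T')$. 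If $T'\not\supseteq T$, one takes $i$ to be the first disagreement (so $i\in T'\setminus T$), introduces the \emph{auxiliary resistant set} $T'':=T'\cap[i]$ (not a non-resistant one, as you suggest), proves the strict gain $|\aaa''|+|\bb''|>|\aaa|+|\bb|$ using the resistance of $(S,T)$ at $i<\max T$, and then falls into the first case since $T''\subset T'$. So which pair's resistance you invoke, and whether you peel from the top or from the first disagreement, depends on the case --- this is the bookkeeping you correctly suspected would be delicate, but your proposed resolution (invoking the strict part of Lemma~\ref{lemdiv} via a non-resistant intermediate) is not how it goes, and would not cleanly give you the strict inequality: Lemma~\ref{lemdiv} compares a non-resistant pair with its ``improvement'' and has the roles of $\mathcal P_a,\mathcal P_b$ arranged so that $\mathcal P_b$ wins, whereas in Case~1 here the roles are deliberately swapped so that $\mathcal P_a$ is the unique maximum independent set. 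You also do not need to ``restrict to adjacent resistant sets'' or worry about the non-resistant sets between them; the two-case reduction handles an arbitrary pair $T'<T$ directly.

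Your handling of the small exceptional pair $T=\{2,3,4\}$ is fine (it only occurs as the larger endpoint), and you are right that condition~2 starting at $i=4$ is what licenses strictness whenever $i\ge 4$; the paper's Case~1 even observes that the chosen $i$ is not the last element of $T$, so strict inequality in condition~2 is available. With the two cases separated as above and the intermediate set chosen to be $T'\cap[i]$ (which is automatically resistant), your binomial argument goes through; without that structure, the argument as you sketch it would stall in the $T'\supset T$ subcase.
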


Roughly speaking, while for general lexicographic pairs of families the size is not monotone w.r.t. the lexicographic order, it is monotone for resistant pairs, which are maximal with respect to the properties that interest us.

Remark that, since $T'<T$, then $T'\ne \{2,3,4\}$ and thus $S',T'$ must satisfy the second condition from the definition of a resistant pair. We also note that we do not use the property that $S,T$ form a resistant pair. The proof also works for $T' = T_0 (=[2,n])$.

The proof of this lemma is based on biregular bipartite graphs and is very similar to the proof of Lemma~\ref{lemdiv}, although is a bit trickier.

\begin{proof} First of all, it is clear that in the conditions of the lemma we have $|\bb'|< |\bb|$. The rest of the proof is concerned with the inequality on the sums of sizes. We will consider two cases depending on how do the sets $T$ and $T'$ relate.\\

\textbf{Case 1. \pmb{$T'\nsupseteq T$.} }  Find the smallest $i$, such that one of the sets contain $i$, while the other does not. Since $T'<T$, we clearly have $i\in T',\ i\notin T$. Consider the set $T'' = T'\cap [i]$. Then we clearly have $T'<T''<T$ and $T''\subset T'$. Accordingly, put $S''$ to be $\{i\}\cup ([i]\setminus T'')$, and consider the cross-intersecting families $\aaa''\subset {[2,n]\choose k-1},\ \bb''\subset {[2,n]\choose k}$, which have characteristic vectors $S''$ and $T''$, respectively. First, note that $\aaa''$ and $\bb''$ is a resistant pair: it follows from the definition of $T''$. We claim that $|\aaa''|+|\bb''|> |\aaa|+|\bb|.$

We prove the inequality above as in Lemma~\ref{lemdiv}, but the roles of $S$ and $T$ are now switched. Consider a bipartite graph $G$ with parts

\begin{align*}
\mathcal P_a:=&\big\{P\in {[2,n]\choose k}: P\cap [i] = [2,i]\setminus T\big\},\\
\mathcal P_b:=&\big\{P\in {[2,n]\choose k}: P\cap [i] = [i]\cap T\big\},
\end{align*}
and edges connecting disjoint sets.
Similarly, we have $\aaa\cup \mathcal P_a = \aaa'',$ $\bb\setminus \mathcal P_b = \bb''$. Indeed, let us verify, e.g., the second equality. All sets $P$ such that $P\cap [i] < T''$ are in $\bb$ and in $\bb\setminus \mathcal P_b$, as well as in $\bb''$, since $T''\lneq T\cap [i]$. On the other hand, if we restrict to $[i]$, the sets $T\cap [i]$ and $T''$ are consecutive in the lexicographic order, and so any set $B$ from $\bb$ such that $B>T''$ must satisfy $B\cap [i] = T\cap [i]$. Therefore, $\bb\setminus \bb''\subset \mathcal P_b$ and $\bb\setminus \mathcal P_b = \mathcal B''$.

Since $\aaa$ and $\bb$ are cross-intersecting, the set $(\aaa\cap \mathcal P_a)\cup (\bb\cap \mathcal P_b)$ is independent in $G$. On the other hand, the largest independent set in $G$ has size $\max\{|\mathcal P_a|, |\mathcal P_b|\}$. Since the pair $\aaa,$ $\bb$ is resistant (and that $i$ is not the last element of $T$), we have that $|[i]\cap T|=|[i]\setminus S|>|[i]\cap S|=|[i]\setminus T|$, which implies $|\mathcal P_a|={n-i\choose |[2,i]\setminus T|}> {n-i\choose |[i]\cap T|} = |\mathcal P_b|$, and thus $\mathcal P_a$ is the (unique) maximal independent set in $G$. We have

$$|\aaa''|+|\bb''|-(|\aaa|+|\bb|) = |\mathcal P_a|-|\aaa\cap \mathcal P_a|-|\bb\cap \mathcal P_b|>0,$$
and the desired inequality is proven. Therefore, when comparing $T'$ and $T$, we may replace $T$ with $T''$, or rather assume that $T\subset T'$. We have reduced Case 1 to the following case.\\

\textbf{Case 2. \pmb{$T'\supseteq T$.} } Arguing inductively, we may assume that $|T'\setminus T|=1$, and that $T'\setminus T=\{i\}$ for some $i\in [2,n]$. Note that in that case $i$ is the last element of $T', S'$, and that $T'\cap S'=\{i\}$. Consider a bipartite graph $G$ with parts
\begin{align*}
\mathcal P_a:=&\big\{P\in {[2,n]\choose k-1}: P\cap [i] = S'\cap [2,i]\big\},\\
\mathcal P_b:=&\big\{P\in {[2,n]\choose k}: P\cap [i] = [i]\setminus S'\big\},
\end{align*}
and edges connecting disjoint sets. As before, we have $\aaa\cup \mathcal P_a = \aaa',$ $\bb\setminus \mathcal P_b = \bb'$. Using the fact that $|[i]\setminus S'|>|[i]\cap S'|$, we again conclude that $|\aaa'|+|\bb'|>|\aaa|+|\bb|$.
\end{proof}

Now let us put the things together.

\begin{proof}[Proof of Theorem~\ref{thmfull2}] First of all, \eqref{eqfull2} follows from Lemma~\ref{lemres}. Next, given a pair of cross-intersecting families $\aaa,\bb$, we may assume using Proposition~\ref{cross2} that their characteristic sets $S,T$ strongly intersect in their last coordinate. Using Lemma~\ref{lemdiv}, we may further replace them with a resistant pair $\aaa',\ \bb'$ with characteristic vectors $S',\ T'$, such that $T<T'$. Therefore, if $T_{l-1}<T$ and $T_{l-1}\neq T$, then $T_l<T'$, and therefore the pair $\mathcal L(S_l,k-1),\ \mathcal L(T_l,k)$ has at least as big sum of cardinalities as $\aaa'$ and $\bb'$. This completes the proof of the theorem.

We only have to add that, although $T_m = \{2,3,4\}, S_m = \{1,4\}$ do not satisfy the second requirement of the definition of a resistant pair, it does not pose any problems. We still may apply Lemma~\ref{lemres} to this pair. Moreover, if initially the characteristic set $T$ of the family $\aaa$ satisfies $T_{m-1}<T<T_m$, then, using Lemma~\ref{lemdiv} it would be eventually reduced to $T_m$, and thus we may apply it as in other cases.
\end{proof}

Let us discuss some possible strengthenings and generalizations of Theorems~\ref{thmfull1} and~\ref{thmfull2}. First of all, let us determine, for which $T$, $T_{l-1}<T<T_l$, it is possible to have equality in \eqref{eqfull3}, given that $\mathcal A, \bb$ are determined by a strongly intersecting pair of sets $S,\ T$, $|S|, |T|\le k$, which intersect in their last element. We say that a pair of strongly intersecting sets $S,\ T$ as above is \underline{$T_l$-neutral}, if $T$ is obtained in the following recursive way:
\begin{enumerate}
\item $T_l$ is $T_l$-neutral;
\item If $T'$ is $T_l$-neutral, then the set $T:=T'\cup \{x\}$ is $T_l$-neutral, where $x = 2|T'|$.
\end{enumerate}
In practice, this means that, starting from a set $T_l$, we add the element $2|T_l|,$ and then continue adding every other element.

We remark that it is not difficult to see that any $T_l$-neutral pair $S, T$ actually satisfies $T_{l-1}<T\le T_l$. Let us also note that
$T\ne T'$: indeed, from the definition of a resistant pair, the largest element in $T_l$ is at most $2|T_l|$ (actually, it is at most $2|T_l|-3$ for all $l<m$ and equal to $2|T_l|-2$ in the case $l=m$), and every newly added element (via part 2 of the recursive definition) is bigger by 2 than the previously added element.

\begin{thm}\label{thmfulleq} Let $n>2k\ge6$. Consider a pair $\aaa \subset {[2,n]\choose k-1},\ \bb\subset{[2,n]\choose k}$ defined by strongly intersecting sets $S, T$ that intersect in their largest element. If $T_{l-1}<T \le T_l$ for some $l=0,\ldots, m$, then equality in \eqref{eqfull3} holds if and only if the pair $S, T$ is $T_l$-neutral.
\end{thm}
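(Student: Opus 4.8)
The plan is to analyze the proof of Theorem~\ref{thmfull2} (via Lemmas~\ref{lemdiv} and~\ref{lemres}) and track exactly when the inequalities used there are equalities rather than strict inequalities. Recall that, starting from a strongly intersecting pair $S,T$ with $T_{l-1}<T\le T_l$, the argument reduces $S,T$ step by step to the resistant pair $S_l,T_l$ by repeated application of Lemma~\ref{lemdiv}. Each application replaces the current pair with characteristic sets $S'',T''$, where $T''=[i]\setminus S$ for the smallest (or any) $i\ge 4$ violating the resistant condition, i.e.\ with $|[i]\setminus S|\le|S\cap[i]|$; and the sum $|\aaa|+|\bb|$ is preserved exactly when $|[i]\setminus S|=|S\cap[i]|$ and strictly increases when $|[i]\setminus S|<|S\cap[i]|$ (this is the last sentence of Lemma~\ref{lemdiv}, coming from the biregular bipartite graph having a \emph{unique} maximum independent set in the strict case). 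So the first step is: equality in \eqref{eqfull3} forces every reduction step along the way to be of the ``balanced'' type $|[i]\setminus S|=|S\cap[i]|$, with $i$ being the \emph{last} element of $T$ at each stage after which we reduce to $T''=[i]\setminus S$.

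Next I would show that, going backwards, the balanced-reduction steps are exactly the inverse of the recursive rule defining $T_l$-neutrality. Concretely: if $S,T$ is such that the last element $j$ of $T$ satisfies $|[j]\setminus S|=|S\cap[j]|$, and we set $T^{-}=T\setminus\{j\}$ (with the matching $S^{-}$ strongly intersecting $T^{-}$ in its last element $j^-$), then $|T|=|T^-|+1$ and the balance condition reads $j=2|S\cap[j]|=2(|[j]|-|[j]\setminus S|)$; a short count using $S\cup T=[j]$, $S\cap T=\{j\}$ gives $j=2|T|-2$, i.e.\ $j=2|T^-|$. That is precisely ``$x=2|T'|$'' in the definition of $T_l$-neutral. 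Conversely, if $T=T'\cup\{x\}$ with $x=2|T'|$ and $x$ the new last element, then the pair $S,T$ fails the resistant condition exactly at $i=x$ with equality $|[x]\setminus S|=|S\cap[x]|$, so Lemma~\ref{lemdiv} applies with equality and reduces $T$ to $T'$. Iterating, a pair $S,T$ with $T_{l-1}<T\le T_l$ reduces to $T_l$ through balanced steps \emph{if and only if} $S,T$ is $T_l$-neutral; and when it does, the final pair produced is literally $\mathcal L(S_l,k-1),\mathcal L(T_l,k)$, which by Proposition~\ref{cross2} is cross-intersecting and achieves the sum on the right of \eqref{eqfull3}. This gives the ``if'' direction directly (the neutral reduction chain preserves $|\aaa|+|\bb|$), and gives ``only if'' via the contrapositive: if $S,T$ is not $T_l$-neutral, then somewhere in its reduction chain there is a step with $|[i]\setminus S|<|S\cap[i]|$, hence a strict increase, so the original sum is strictly below $|\mathcal L(S_l,k-1)|+|\mathcal L(T_l,k)|$.

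There are two gaps to fill carefully. The first, and the one I expect to be the main obstacle, is that a priori the reduction procedure of Lemma~\ref{lemdiv} is not canonical: at a non-resistant pair there may be several indices $i$ with $|[i]\setminus S|\le|S\cap[i]|$, and the reduction chain could branch. I need to argue that if \emph{some} chain of balanced steps reaches $T_l$ then the pair is $T_l$-neutral, and that the possibility of a non-balanced step anywhere cannot be circumvented by choosing a different $i$. The clean way to do this is to not think of it procedurally: instead, read off directly from the $\{0,1\}$-vector of $S$ (equivalently $T=[j]\setminus S$ with a $1$ at $j$) a closed-form criterion — namely that $S,T$ is $T_l$-neutral iff, writing the elements of $T$ in increasing order as $t_1<\dots<t_r=j$, we have $t_{q}=2q$ for all $q$ with $t_q> (\text{last element of }T_l)$ — and then prove both that this criterion is equivalent to neutrality (immediate from the recursion) and that equality in \eqref{eqfull3} is equivalent to it, by a direct comparison of $|\mathcal L(T,k)|+|\mathcal L(S,k-1)|$ with the resistant maximum using the cascade computations already carried out in the reduction of Theorem~\ref{thmfull1} to Theorem~\ref{thmfull2}. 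The second, minor, gap is handling the boundary case $l=m$, where $T_m=\{2,3,4\}$, $S_m=\{1,4\}$ fails the second resistant axiom; as already noted after Lemma~\ref{lemres} this causes no problems, since Lemmas~\ref{lemdiv} and~\ref{lemres} still apply to this pair, and one checks the $T_m$-neutral family $T=\{2,3,4,6,8,\ldots\}$ behaves exactly as in the generic case. Finally I would remark that the parenthetical observation in the paragraph before the theorem — that the largest element of $T_l$ is at most $2|T_l|-3$ for $l<m$ and equals $2|T_l|-2$ for $l=m$ — guarantees $T_{l-1}<T\le T_l$ for every $T_l$-neutral $T$, so the indexing in the statement is consistent.
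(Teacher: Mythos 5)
Your overall plan and your ``if'' direction are essentially the paper's: follow the chain of additions $T'\mapsto T'\cup\{2|T'|\}$ from $T_l$ and observe, in the setup of Lemma~\ref{lemdiv}, that at each step the bipartite graph has $|\mathcal P_a|=|\mathcal P_b|$ (because $k-1-|[2,x]\cap S|=k-|[x]\setminus S|$ when $|[x]\cap S|=|[x]\setminus S|$) and that $\aaa\setminus\aaa'=\mathcal P_a$, $\bb\setminus\bb'=\mathcal P_b$ since the characteristic sets are consecutive in the lex order restricted to $[x]$; hence the sum is preserved. That is exactly what the paper does.

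Where you diverge is the ``only if'' direction, and you correctly flag the precise difficulty: the reduction from Lemma~\ref{lemdiv} is not canonical, so a priori a strict step might be avoidable by choosing a different index $i$. Your proposed fix --- a closed-form criterion on the elements of $T$ together with a direct binomial comparison of $|\mathcal L(S,k-1)|+|\mathcal L(T,k)|$ against the resistant maximum via the cascade formulas --- is left entirely as a sketch and would require a nontrivial computation you have not carried out. The paper sidesteps this with a structural observation you do not make: for $T_{l-1}<T\le T_l$ one in fact has $T\supseteq T_l$. Given this, one argues by induction on $|T|$: if $|[i]\cap S|\ge|[i]\setminus S|$ at some $4\le i\le x-1$ (with $x$ the last element of $T$), then the reduction to $T':=T\cap[i]$ gives a \emph{strict} increase, while $T'\supseteq T_l$ forces $T<T'\le T_l$, so the pair for $T'$ is still subject to the same bound $|\mathcal L(S_l,k-1)|+|\mathcal L(T_l,k)|$ --- contradiction. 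Hence strict inequality holds for all $4\le i\le x-1$, and since $S,T$ is not resistant the failure occurs at $i=x$, necessarily with equality (the case $>$ at $x$ would give $\ge$ at $x-1$). This kills the branching concern without any computation, so I would replace your direct-comparison route with this inductive contradiction. One small slip in your closed-form: the balance condition $|[j]\cap S|=|[j]\setminus S|$ together with $S\cup T=[j]$, $S\cap T=\{j\}$ gives $j=2|T|-2$, so the $q$-th element of $T$ beyond the last element of $T_l$ equals $2(q-1)$, not $2q$.
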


\begin{proof} First, let us show that any $T_l$-neutral pair would have equality in ~\eqref{eqfull3}. We do it inductively. It is clear for a pair involving $T_l$ itself. Assuming it holds for $T'$, let us prove that it holds for $T:=T'\cup \{2|T'|\}$.

Consider the pairs of cross-intersecting families $\aaa, \bb$, $\aaa', \bb'$, corresponding to the $T_l$-neutral pairs of sets $S, T$ and $S', T'$, respectively. Looking in the proof of Lemma~\ref{lemdiv}, given that $|\mathcal P_a|\le |\mathcal P_b|$, the equality in $|\aaa|+|\bb|\le |\aaa'|+|\bb'|$ occur if and only if, first $|\mathcal P_a| = |\mathcal P_b|$, and, second, $\aaa\setminus \aaa' = \mathcal P_a,$ $\bb\setminus \bb' = \mathcal P_b.$ Indeed, in a connected biregular bipartite graph there are only two possible independent sets of maximal size: its parts.

Consider the set $T = T'\cup \{x\}$ and the corresponding set $S$. By the definition of a neutral set, we have $$|[x]\cap S| = |[x]\setminus S|.$$
Therefore, applying the argument of Lemma~\ref{lemdiv} with
\begin{align*}
\mathcal P_a:=&\big\{P\in {[2,n]\choose k-1}: P\cap [x] = [2,x]\cap S\big\},\\
\mathcal P_b:=&\big\{P\in {[2,n]\choose k}: P\cap [x] = [x]\setminus S\big\},
\end{align*}
We get that $k-1-|[2,x]\cap S| = k-|[x]\setminus S|$, which implies $|\mathcal P_a| = |\mathcal P_b|$. Moreover, $\aaa\setminus \aaa' = \mathcal P_a,$ $\bb\setminus \bb' = \mathcal P_b,$ since the sets $S'$ and $S$ are consecutive in the lexicographical order on $[x]$, and the same for $T, T'$. Therefore, $|\aaa'|+|\bb'| = |\aaa|+|\bb|$.\\


In the other direction, take a set $T$, $T_{l-1}<T\le T_l$, and its pair $S$. Consider the corresponding pair of cross-intersecting families $\aaa, \bb$. Then it is easy to see that $T\supset T_l$. (Otherwise, either $T>T_l$, or $T$ must contain and thus precede some other resistant set, which precedes $T_l$, and this would contradict the position of $T$ in the ordering.) Assuming that $x$ is the last element in $T$, we must have
$$|[i]\cap S|<|[i]\setminus S| \ \ \ \ \ \ \ \text{for }4\le i\le x-1.$$
Indeed, otherwise, considering the bipartite graph $G$ with parts $\mathcal P_a$, $\mathcal P_b$ as displayed above for that $i$, we would get that $|\mathcal P_a|\le |\mathcal P_b|$ and both $\mathcal P_a\cap \aaa$ and $\mathcal P_b\cap \bb$ are non-empty. In this case $|\mathcal P_a\cap \aaa|+|\mathcal P_b\cap \bb|<|\mathcal P_a|$, which means that the pair $\aaa', \bb'$ defined by the characteristic sets $T':=T\cap [i]$ and its pair $S'$ would satisfy $|\aaa'|+|\bb'|>|\aaa|+|\bb|$. Moreover, $T'\supset T_l$, so $T<T'\le T_l$ and  $|\aaa'|+|\bb'|\le |\mathcal L(S_l,k-1)|+|\mathcal L(T_l,k)|$. This would contradict the equality in \eqref{eqfull3}.

Therefore, since $S, T$ are not resistant, we have
$$|[x]\cap S|=|[x]\setminus S|.$$
(We cannot have ``$>$'', since otherwise we would have ``$\ge$'' for $i=x-1$.) Removing $x$, we get a set $T'$, and conclude that $x = 2|T'|$. By induction on the size of the set $T$, we may assume that $T'$ is $T_l$-neutral. But then $T$ is $T_l$-neutral as well.
\end{proof}

A slight modification of this argument (with an extension of the definition of a neutral pair to the ones that start with $T_{m+1}:=\{2,3\}$) gives the following:

\begin{prop}\label{propfulleq} Let $n>2k\ge 6$. For any intersecting $\ff\subset {[n]\choose k}$ with ${n-4\choose k-3}<\gamma(\ff)<{n-3\choose k-2}$ we have $|\ff| < {n-2\choose k-2}+2{n-3\choose k-2}$.
\end{prop}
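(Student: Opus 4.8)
The plan is to mimic the structure of the proof of Theorem~\ref{thmfulleq}, but now allowing the recursive construction of ``neutral'' pairs to start from the set $T_{m+1}:=\{2,3\}$ (with its strongly-intersecting partner $S_{m+1}=\{1,3\}$), which corresponds to the diversity value $|\mathcal L(T_{m+1},k-1)|={n-3\choose n-k-1}={n-3\choose k-2}$. Indeed, passing from the intersecting family $\ff$ to the cross-intersecting pair $\aaa=\ff(1),\bb=\ff(\bar1)$ and invoking Theorem~\ref{thmHil}, Proposition~\ref{cross2} and Lemma~\ref{lemdiv} exactly as in Section~\ref{sec3}, we may assume $\aaa=\mathcal L(S,k-1)$, $\bb=\mathcal L(T,k)$ with $S,T$ strongly intersecting in their last element, and moreover that $S,T$ is a resistant pair or, using Lemma~\ref{lemdiv}, reduce to one. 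The hypothesis ${n-4\choose k-3}<\gamma(\ff)<{n-3\choose k-2}$ translates, via the cascade-form computation from the reduction of Theorem~\ref{thmfull1}, into $T_m=\{2,3,4\}<T<T_{m+1}=\{2,3\}$ in the lexicographic order on $2^{[2,n]}$ (recall $\{2,3\}$ is lexicographically below $\{2,3,4\}$ since $\{2,3\}\supset\{2,3\}$... more precisely, a set with fewer elements but the same prefix is larger, so $\{2,3\}$ sits above $\{2,3,4\}$; in any case $\gamma(\ff)$ strictly between the two corresponding sizes forces the characteristic set strictly between them).

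First I would establish that for such a pair $|\aaa|+|\bb|\le{n-2\choose k-2}+2{n-3\choose k-2}$, with equality only for $T_{m+1}$-neutral pairs. The upper bound is the statement of Theorem~\ref{thmfull2} applied with $l=m+1$: the pair $\mathcal L(S_{m+1},k-1),\mathcal L(T_{m+1},k)$ has $|\mathcal L(\{1,3\},k-1)|=\big|\{A:2\in A\}\big|$... — concretely one checks $|\mathcal L(S_{m+1},k-1)|+|\mathcal L(T_{m+1},k)|={n-2\choose k-2}+2{n-3\choose k-2}$ by the same cascade bookkeeping as in the reduction of Theorem~\ref{thmfull1} (the value $2{n-3\choose k-2}$ matching the diversity of $\{F:|F\cap[3]|\ge2\}$ noted in the text). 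Since Lemma~\ref{lemdiv} and the first case of the argument in Theorem~\ref{thmfulleq} push any strongly intersecting $S,T$ with $T<T_{m+1}$ towards $T_{m+1}$, the bound follows; note that here we genuinely use Theorem~\ref{thm1} in the form that already secures $|\ff|\le{n-2\choose k-2}+2{n-3\choose k-2}$ once $\gamma(\ff)\ge{n-4\choose k-3}$, so there is nothing to prove about the inequality itself — the content is the \emph{strictness}.

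The main work, then, is the equality analysis, and I would run it verbatim along the lines of the ``other direction'' in the proof of Theorem~\ref{thmfulleq}. Assume $|\aaa|+|\bb|={n-2\choose k-2}+2{n-3\choose k-2}$; I want to force $T\in\{T_m,T_{m+1}\}$, contradicting $T_m<T<T_{m+1}$. Letting $x$ be the last element of $T$, the equality propagates back through each Lemma~\ref{lemdiv}-style biregular bipartite graph $G$ on parts $\mathcal P_a=\{P\in{[2,n]\choose k-1}:P\cap[i]=[2,i]\cap S\}$, $\mathcal P_b=\{P\in{[2,n]\choose k}:P\cap[i]=[i]\setminus S\}$: equality in $|\aaa|+|\bb|\le|\aaa'|+|\bb'|$ forces $|[i]\cap S|<|[i]\setminus S|$ for $4\le i\le x-1$ and $|[x]\cap S|=|[x]\setminus S|$, i.e. $x=2|T\setminus\{x\}|=2(|T|-1)$; then by induction on $|T|$ the truncation $T\setminus\{x\}$ is $T_{m+1}$-neutral (the base of the induction being $T\setminus\{x\}=\{2,3\}=T_{m+1}$ itself, since the recursion has to bottom out at a resistant set and the only resistant sets at or below $T_{m+1}$ in the relevant range are $T_m$ and $T_{m+1}$, and $T_m=\{2,3,4\}$ would give $x=2|\{2,3,4\}|=8\notin\{2,3,4\}\cup\{8\}$ consistently — but then $T\supsetneq T_m$ with $T<T_{m+1}$ is impossible once we track the ordering, ruling out the $T_m$ branch). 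Hence $T$ is $T_{m+1}$-neutral, so in particular $T\supset\{2,3\}$, and one checks every $T_{m+1}$-neutral set satisfies $T\ge T_{m+1}$ — contradicting $T<T_{m+1}$. Therefore the inequality is strict for all $T_m<T<T_{m+1}$, which is exactly the claim.

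The step I expect to be the main obstacle is the careful verification that the only resistant pair ``available'' as a base case of the neutral recursion inside the open interval $({n-4\choose k-3},{n-3\choose k-2})$ is $T_{m+1}$ itself — i.e., ruling out that the back-induction terminates at $T_m=\{2,3,4\}$ (which \emph{is} resistant) and thereby accidentally produces a $T_m$-neutral set wedged strictly between $T_m$ and $T_{m+1}$. Concretely one must check that no $T$ with $T_m<T<T_{m+1}$ can be $T_m$-neutral: the first element the $T_m$-recursion appends is $2|T_m|=8$, giving $T=\{2,3,4,8\}$, and one shows $\{2,3,4,8\}>T_{m+1}=\{2,3\}$ in the lexicographic order on $2^{[2,n]}$ — since at the smallest index of disagreement $T_{m+1}$ has the larger ``tail'' in the $\{0,1\}$-vector sense. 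Once this ordering fact is pinned down, everything else is a direct transcription of the Lemma~\ref{lemdiv}/Theorem~\ref{thmfulleq} machinery, and the concluding strict inequality on $|\ff|=|\aaa|+|\bb|$ is immediate.
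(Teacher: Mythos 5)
Your plan is exactly what the paper intends — its ``proof'' of Proposition~\ref{propfulleq} is the single remark ``A slight modification of this argument (with an extension of the definition of a neutral pair to the ones that start with $T_{m+1}:=\{2,3\}$) gives the following,'' and you are carrying out precisely that modification by re-running the equality analysis of Theorem~\ref{thmfulleq} and showing that no $T_{m+1}$-neutral set can lie in the open interval. The structure is therefore right, but the endgame has a few slips worth fixing. First, $|T_m|=|\{2,3,4\}|=3$, so $2|T_m|=6$, not $8$; the $T_{m+1}$-neutral chain is $\{2,3\}>\{2,3,4\}>\{2,3,4,6\}>\{2,3,4,6,8\}>\cdots$ in the paper's lex order on $2^{[2,n]}$, since adding an element makes a set lex-\emph{earlier} (the convention is $A\supset B\Rightarrow A<B$). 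Second, your closing contradiction claims that every $T_{m+1}$-neutral set satisfies $T\ge T_{m+1}$; the correct inequality is $T\le T_{m+1}$, so as written this does not contradict $T<T_{m+1}$. What you actually want is that $2|T_{m+1}|=4$ makes the very first step of the recursion land exactly on $T_m=\{2,3,4\}$, and all subsequent members of the chain are $<T_m$; hence the $T_{m+1}$-neutral sets avoid the open interval $(T_m,T_{m+1})$ entirely, which is the desired contradiction. Relatedly, the separate worry about ``ruling out the $T_m$ branch'' is unnecessary: every $T_m$-neutral set is automatically $T_{m+1}$-neutral (the recursion from $T_{m+1}$ passes through $T_m$), and these all sit at or below $T_m$, so nothing can be wedged in the open interval from that side either.
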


Recall that there are intersecting families $\ff\subset {[n]\choose k}$ for both $\gamma(\ff) = {n-4\choose k-3}$ and ${n-3\choose k-2}$ that have size ${n-2\choose k-2}+2{n-3\choose k-2}$.

 Next, our techniques allow us to give a weighted version of Theorems~\ref{thmfull1} and~\ref{thmfull2}. Assume that, instead of maximising the expression $|\ff|=\Delta(\ff)+\gamma(\ff)$ with a given lower bound on $\gamma(\ff)$, we are maximising the expression $\Delta(\ff)+c\gamma(\ff)$ with some $c>1$. (In terms of cross-intersecting families, we are maximising the expression $|\aaa|+c|\bb|$.) Then the following
is true.

\begin{thm}\label{thmfullw} Let $n>2k\ge 6$. Consider all resistant pairs $S_l\subset [n],\ T_l\subset [2,n]$, where $l=0,\ldots,m$. Assume that $T_0<T_1<T_2<\ldots <T_m$. Put $C:= \frac{n-k-3}{k-3}>1$.  Then
\begin{equation}\label{eqfull4} |\mathcal L(S_{l-1},k-1)|+C|\mathcal L(T_{l-1},k)|\ge|\mathcal L(S_{l},k-1)|+C|\mathcal L(T_{l},k)| \ \ \ \text{ for each }l \in [m],\end{equation}
and any cross-intersecting pair of families $\mathcal A\subset {[2,n]\choose k-1},\ \mathcal B\subset {[2,n]\choose k}$ with $|\mathcal L(T_{l-1},k)|<|\bb|\le |\mathcal L(T_l,k)|$ satisfies $|\aaa|+C|\bb|\le |\mathcal L(S_l,k-1)|+C|\mathcal L(T_l, k)|$.\\

In terms of intersecting families, if $\ff\subset {[n]\choose k}$ is intersecting and $|\mathcal L(T_{l-1},k)|<\gamma(\ff)\le |\mathcal L(T_l,k)|$, then $\Delta(\ff)+C\gamma(\ff)\le |\mathcal L(S_l,k-1)|+C|\mathcal L(T_l, k)|$.
\end{thm}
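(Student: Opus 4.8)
The plan is to mimic the proof of Theorem~\ref{thmfull2}, replacing the two biregular-graph comparison lemmas (Lemma~\ref{lemdiv} and Lemma~\ref{lemres}) by their weighted analogues. First I would set up the reductions exactly as before: given a cross-intersecting pair $\aaa,\bb$, by Theorem~\ref{thmHil} we may assume $\aaa=\mathcal L(S,k-1),\ \bb=\mathcal L(T,k)$, and by Proposition~\ref{cross2} we may assume $S,T$ strongly intersect in their last element. The key point is that in the proofs of Lemmas~\ref{lemdiv} and~\ref{lemres} the quantities $|\aaa|+|\bb|$ and $|\aaa'|+|\bb'|$ differed by an expression of the shape $|\mathcal P_b|-|\aaa\cap\mathcal P_a|-|\bb\cap\mathcal P_b|$ (or the analogous one with $\mathcal P_a$), where $\mathcal P_a,\mathcal P_b$ are parts of a biregular bipartite graph whose sizes are consecutive binomial coefficients ${n-i\choose s_a},{n-i\choose s_b}$ with $s_a\le s_b$. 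In the weighted problem the analogous difference becomes, in the situation of Lemma~\ref{lemdiv} (where we decrease $T$-side and increase... rather, we pass to $T'\subsetneq T$, so $\bb$ grows and $\aaa$ shrinks),
\[
\bigl(|\aaa'|+C|\bb'|\bigr)-\bigl(|\aaa|+C|\bb|\bigr)=C\,|\mathcal P_b|-|\aaa\cap\mathcal P_a|-C\,|\bb\cap\mathcal P_b|\ \ge\ C|\mathcal P_b|-|\mathcal P_a|,
\]
since $|\aaa\cap\mathcal P_a|\le|\mathcal P_a|$ and $|\bb\cap\mathcal P_b|\le|\mathcal P_b|$. So the whole argument goes through provided $C|\mathcal P_b|\ge|\mathcal P_a|$, i.e. $C\ge|\mathcal P_a|/|\mathcal P_b|={n-i\choose s_a}\big/{n-i\choose s_b}$ for all relevant $i$; and in Lemma~\ref{lemres}, Case~1, where the roles of $\mathcal P_a,\mathcal P_b$ and the strict inequality $|\mathcal P_a|>|\mathcal P_b|$ were used in the opposite direction, we instead need the easy bound $C\ge 1$, which holds.

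Thus the real work is the inequality
\[
C=\frac{n-k-3}{k-3}\ \ge\ \max_{i}\ \frac{\binom{n-i}{s_a}}{\binom{n-i}{s_b}},\qquad s_a<s_b,\ s_a+s_b\le n-i,
\]
where $i$ ranges over $4\le i\le j$ (the last element of $T$) with $|[i]\setminus S|=s_b\ge s_a=|[i]\cap S|$ as in Lemma~\ref{lemdiv}. Since $\binom{n-i}{s_a}/\binom{n-i}{s_b}$ is maximized when $s_b=s_a+1$ and $n-i$ is as small as possible subject to $s_b\le n-i-s_a$, and since $s_a\ge 2$, $s_b\ge 3$ (recall $T\supset\{2,3,4\}$, so $i\ge 4$ and the $S$-side has at most $i-3$ elements while the complementary side has at least... one tracks $s_a=k-|[i]\cap S|$, $s_b=k-|[i]\setminus S|$, so actually $s_a,s_b\le k$ and $s_b-s_a=|[i]\cap S|-|[i]\setminus S|+(\text{from }k)$ — I would rewrite this carefully), the worst case is a ratio of the form $\binom{N}{k}/\binom{N}{k-1}=(N-k+1)/k$ with $N=n-i$ minimal. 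The constraint $i\le j\le 2|T|\le 2k$ (from the resistant-pair bound, which is also what makes $T'$ resistant after truncation) gives $N=n-i\ge n-2k$, but one needs a slightly sharper bookkeeping: tracking that $s_a=k-|[i]\cap S|$ and in the extremal configuration $|[i]\cap S|$ is about $i/2$, so $N-s_a$ and $s_a$ are controlled, yielding the stated $C=(n-k-3)/(k-3)$. I expect this computation — identifying precisely which $i$ and which $(s_a,s_b)$ maximize the binomial ratio, and verifying it is $\le (n-k-3)/(k-3)$ using $i\le 2k-3$ and the parity/counting relations between $S$ and $[i]$ — to be the main obstacle; everything else is a transcription of the unweighted proof.

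Concretely, the steps in order: (1) record the reductions (Kruskal–Katona, then Proposition~\ref{cross2}) verbatim from the proof of Theorem~\ref{thmfull2}; (2) state and prove the weighted Lemma~\ref{lemdiv}: under $|[i]\setminus S|\ge|[i]\cap S|$, passing to $T'=[i]\setminus S$ (with matching $S'$) does not decrease $|\aaa|+C|\bb|$, the proof being the displayed inequality above together with the binomial-ratio bound; (3) state and prove the weighted Lemma~\ref{lemres}: for resistant pairs, $T'<T$ forces $|\aaa|+C|\bb|\le|\aaa'|+C|\bb'|$ — Case~1 uses $C\ge1$ with the graph on the $T$-side, Case~2 is the same with the $S$-side and again only needs $C\ge1$ since there $|\mathcal P_a|\le|\mathcal P_b|$ works in our favor; (4) combine exactly as in the proof of Theorem~\ref{thmfull2}: inequality \eqref{eqfull4} is the weighted Lemma~\ref{lemres}, and for a general pair, reduce via the weighted Lemma~\ref{lemdiv} to a resistant pair with $T<T'$, hence $T_l$ (for the appropriate $l$) dominates, finishing the bound; (5) note the boundary pair $T_m=\{2,3,4\}$ as in the original proof. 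The one genuinely new ingredient is step (2)'s binomial inequality, which I would isolate as a short claim: for $2\le s_a<s_b\le k$ with $s_a+s_b\le n-i$ and $4\le i\le 2k-3$, one has $\binom{n-i}{s_a}\le C\binom{n-i}{s_b}$ with $C=(n-k-3)/(k-3)$; this reduces by the standard unimodality step ($\binom{N}{t}/\binom{N}{t-1}=(N-t+1)/t$ decreasing in $t$... increasing, so ratio of far-apart coefficients bounded by product of adjacent ones, worst at the smallest $N$) to checking it when $s_b=s_a+1$, $N=n-i$ as small as the resistant constraints allow, which is where the precise value of $C$ comes out.
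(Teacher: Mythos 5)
Your high-level plan — follow the proof of Theorem~\ref{thmfull2} and upgrade the two biregular-bipartite-graph lemmas to weighted versions — is the right one, and the binomial-ratio inequality $\binom{n-i}{s_a}/\binom{n-i}{s_b}\ge (n-k-3)/(k-3)$ is indeed the only genuinely new computation. But you have located it in the wrong lemma, and the step you wave off as ``only needs $C\ge 1$'' is exactly where the inequality must do its work. In Lemma~\ref{lemdiv} the replacement passes to $T'\subsetneq T$, so $\bb$ \emph{grows} and $\aaa$ shrinks, and there $|\mathcal P_b|\ge|\mathcal P_a|$. Writing $a=|\aaa\cap\mathcal P_a|$, $b=|\bb\cap\mathcal P_b|$, the independent-set bound gives $a+b\le|\mathcal P_b|$, whence
$\bigl(|\aaa'|+C|\bb'|\bigr)-\bigl(|\aaa|+C|\bb|\bigr)=C(|\mathcal P_b|-b)-a\ge Ca-a=(C-1)a\ge 0$,
so \emph{no} new binomial inequality is needed in this step; $C\ge 1$ and the (unweighted) independent-set bound already suffice. (This is how the paper handles it, in two lines.) Your displayed chain $\,C|\mathcal P_b|-a-Cb\ge C|\mathcal P_b|-|\mathcal P_a|\,$ does not follow from $a\le|\mathcal P_a|$, $b\le|\mathcal P_b|$ — those give only the useless $\ge -|\mathcal P_a|$ — and the condition $C|\mathcal P_b|\ge|\mathcal P_a|$ you then seek to verify is trivially true here because $|\mathcal P_b|\ge|\mathcal P_a|$.

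The actual obstacle sits in Lemma~\ref{lemres}. In both Case~1 and Case~2 of that lemma the replacement is $\aaa'=\aaa\cup\mathcal P_a$, $\bb'=\bb\setminus\mathcal P_b$: now $\bb$ \emph{shrinks}, and each set removed from $\bb$ costs $C>1$. There $|\mathcal P_a|>|\mathcal P_b|$, and the unweighted independent-set bound gives only $|\mathcal P_a|-a\ge b$, i.e. $(|\aaa'|+C|\bb'|)-(|\aaa|+C|\bb|)=(|\mathcal P_a|-a)-Cb\ge b-Cb$, which can be negative. To salvage this you must place weight $C$ on $\mathcal P_b$ and weight $1$ on $\mathcal P_a$ and argue $\mathcal P_a$ is still the maximum-\emph{weight} independent set, i.e. $|\mathcal P_a|\ge C|\mathcal P_b|$. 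With $s_a+s_b=2k-i$, $s_a>s_b$, and $s_b\le k-3$, one has
$\binom{n-i}{s_a}/\binom{n-i}{s_b}\ge (n-i-s_b)/s_a=(n-2k+s_a)/s_a\ge (n-k-3)/(k-3)=C$,
which is precisely the computation you isolated — but it belongs to (the weighted upgrade of) Lemma~\ref{lemres}, not Lemma~\ref{lemdiv}. Your step~(3), which claims Lemma~\ref{lemres} Case~2 ``only needs $C\ge1$ since there $|\mathcal P_a|\le|\mathcal P_b|$ works in our favor,'' misremembers the direction of that inequality; in both cases of Lemma~\ref{lemres} it is $|\mathcal P_a|>|\mathcal P_b|$. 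So, as written, the proposal would produce a gap in step~(3). The fix is simply to move the binomial-ratio claim from step~(2) to step~(3), keep the paper's one-line argument for step~(2), and carry out the $|\mathcal P_a|\ge C|\mathcal P_b|$ verification inside the resistant-pair monotonicity lemma.
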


\begin{proof} The proof of this theorem follows the same steps as the proof of Theorem~\ref{thmfull2}. We sketch the proof of the cross-intersecting version of the theorem.
Using Lemma~\ref{lemdiv}, we may assume that $\aaa$ and $\bb$ form a resistant pair (indeed, otherwise, replacing $\aaa,\ \bb$ with $\aaa'|,$ $|\bb'$ which satisfy $|\aaa'|+|\bb'|\ge |\aaa|+|\bb|$, $|\bb'|>|\bb|$ definitely increases the value of $|\aaa|+C|\bb|$). Then, looking at the proof of Lemma~\ref{lemres}, we see that in each of the cases the bipartite graph $G$ had parts of sizes ${n-i\choose s_a}$, ${n-i\choose s_b}$, where $s_a+s_b = 2k-i$ and $s_a>s_b$. We also know that $s_b\le k-3$. Therefore, even if we put weight ${n-i\choose s_a}/{n-i\choose s_b}$ on each vertex of the part $\mathcal P_b$ and weight 1 on each vertex of the part $\mathcal P_a$, we can still conclude that the independent set of the largest weight in $G$ is $\mathcal P_a$, and the rest of the argument works out as before.
We have
\begin{equation}\frac{{n-i\choose s_a}}{{n-i\choose s_b}}\ge\frac{(n-i-s_b)}{s_a} = \frac{(n-2k+s_a)}{s_a}\ge \frac{(n-k-3)}{k-3}.\end{equation}
Therefore, substituting $\frac{n-k-3}{k-3}$ as the weight in the $\mathcal P_b$ part will work.
\end{proof}



\begin{cor}\label{corweight}
 Let $n>2k\le 6$. For any intersecting family $\ff\subset{[n]\choose k},$ $\gamma(\ff)\le {n-4\choose k-3}$, we have $|\Delta(\ff)|+\frac{n-k-3}{k-3}\gamma(\ff)\le {n-1\choose k-1}$.

 If, additionally, $\ff$ is non-trivial, then $|\Delta(\ff)|+\frac{n-k-3}{k-3}\gamma(\ff)\le {n-1\choose k-1}-{n-k-1\choose k-1}+\frac{n-k-3}{k-3}$.
\end{cor}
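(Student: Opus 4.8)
The plan is to derive both inequalities directly from Theorem~\ref{thmfullw}, using the monotonicity in~\eqref{eqfull4} to push the bound to the correct end of the scale of resistant pairs. Write $C:=\frac{n-k-3}{k-3}$ and, for $l=0,1,\dots,m$, set $a_l:=|\mathcal L(S_l,k-1)|+C\,|\mathcal L(T_l,k)|$; then~\eqref{eqfull4} says precisely that $a_0\ge a_1\ge\dots\ge a_m$. First I would apply the intersecting-family form of Theorem~\ref{thmfullw} to obtain $\Delta(\ff)+C\gamma(\ff)\le a_l$ for the relevant index $l$, and then read off the two endpoint values $a_0$ and $a_1$.

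For the first inequality: if $\gamma(\ff)=0$ then $\ff$ is trivial and $\Delta(\ff)=|\ff|\le{n-1\choose k-1}$, so the bound holds. Otherwise $0=|\mathcal L(T_0,k)|<\gamma(\ff)\le{n-4\choose k-3}=|\mathcal L(T_m,k)|$, hence there is a unique $l\in[m]$ with $|\mathcal L(T_{l-1},k)|<\gamma(\ff)\le|\mathcal L(T_l,k)|$, and Theorem~\ref{thmfullw} yields $\Delta(\ff)+C\gamma(\ff)\le a_l\le a_0$. It remains to note $a_0={n-1\choose k-1}$, which is immediate: $T_0=[2,n]$ corresponds to the empty family $\bb$, so $|\mathcal L(T_0,k)|=0$, and the largest family $\aaa\subset{[2,n]\choose k-1}$ cross-intersecting with the empty family is all of ${[2,n]\choose k-1}$, i.e.\ $|\mathcal L(S_0,k-1)|={n-1\choose k-1}$.

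For the second inequality the additional input is that a non-trivial intersecting family has $\gamma(\ff)=|\ff|-\Delta(\ff)\ge1$; hence in the argument above the index satisfies $l\ge1$, and so $\Delta(\ff)+C\gamma(\ff)\le a_l\le a_1$. I would then identify $a_1$ as the value attached to the smallest resistant number $\gamma_1=1$, equivalently to the pair $S_1,T_1$ occurring in the case $u=k$ of Theorem~\ref{thm1}: the Hilton--Milner pair $T_1=[2,k+1]$, $S_1=\{1,k+1\}$. There $|\mathcal L(T_1,k)|=\gamma_1=1$, and since the case $u=k$ of~\eqref{eq01} is sharp we get $|\mathcal L(S_1,k-1)|={n-1\choose k-1}-{n-k-1\choose k-1}$; therefore $a_1={n-1\choose k-1}-{n-k-1\choose k-1}+\frac{n-k-3}{k-3}$, which is exactly the asserted bound.

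I do not expect a genuine obstacle: all of the substance already lies in Theorem~\ref{thmfullw} and~\eqref{eqfull4}. The only points requiring (minor) care are the bookkeeping at the two ends of the scale of resistant pairs --- checking that the $l=0$ endpoint $T_0=[2,n]$ gives $a_0={n-1\choose k-1}$, and that the $l=1$ endpoint is the Hilton--Milner pair with diversity $1$ --- together with the elementary remark that non-triviality forces $\gamma(\ff)\ge1$, which is precisely what upgrades the conclusion from $a_0$ to $a_1$.
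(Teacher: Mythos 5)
Your proof is correct and follows the natural (and surely intended) deduction from Theorem~\ref{thmfullw}: apply the weighted intersecting-family bound, use the monotonicity~\eqref{eqfull4} to reduce to the endpoint index, and evaluate $|\mathcal L(S_0,k-1)|={n-1\choose k-1}$ with $|\mathcal L(T_0,k)|=0$ for the first inequality, and $|\mathcal L(S_1,k-1)|={n-1\choose k-1}-{n-k-1\choose k-1}$ with $|\mathcal L(T_1,k)|=1$ (the Hilton--Milner pair) for the second. The paper states the corollary without proof, so there is nothing for your argument to diverge from; the bookkeeping at $l=0$ and $l=1$ and the observation that non-triviality forces $\gamma\ge1$ are exactly what is needed.
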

\vskip+0.2cm

Is not difficult extend the considerations of this section to the case of cross-intersecting families $\aaa\subset {[n]\choose a}, \bb\subset {[n]\choose b}$, $a<b$. The wording of Theorem~\ref{thmfull2} would stay practically the same. One just need to adjust the definition of a resistant pair. Note that, unlike before in this section, here we will work with cross-intersecting families on $[n]$ (and not $[2,n]$).

We say that two sets $S, T\subset [n]$  \underline{form an $(a,b)$-resistant pair}, if the following holds. Assuming that the largest element of $T$ is $j$, we have
\begin{enumerate}
  \item $S\cap T = \{j\},\ S\cup T = [j]$, $|S|\le a,$ $|T|\le b$;
  \item for each $b-a+2\le i\le j$ we have $|[i]\cap S|-a< |[i]\setminus S|-b$;
  \item The pair $T =\{1,\ldots,b-a+2\}, S = \{1,b-a+2\}$ is resistant.
\end{enumerate}
 Again, for convenience, we put $T_0 = [2,n]$ to correspond to the empty family and $T_{m+1}=\{1,b-a+1\}$ to be an analogue of the set $\{2,3\}$ in this case, where $m$ is the number of resistant pairs.
Here is the theorem, which is an analogue of Theorems~\ref{thmfull2},~\ref{thmfulleq},~\ref{thmfullw} and Proposition~\ref{propfulleq} in the case of general cross-intersecting families. Its proof is a straightforward generalization of the proofs of the respective theorems, thus we omit it.

\begin{thm}\label{thmfullcri} Let $a,b>0$, $n>a+b$. Consider all $(a,b)$-resistant pairs $S_l\subset [n],\ T_l\subset [2,n]$, where $l\in [m]$. Assume that $T_0<T_1<T_2<\ldots <T_m<T_{m+1}$.

1. Then
\begin{equation}\label{eqfull5} |\mathcal L(S_{l-1},a)|+|\mathcal L(T_{l-1},b)|>|\mathcal L(S_{l},a)|+|\mathcal L(T_{l},b)| \ \ \ \text{ for each }l \in [m],\end{equation}
and any cross-intersecting pair of families $\mathcal A\subset {[n]\choose a},\ \mathcal B\subset {[n]\choose b}$ with $|\mathcal L(T_{l-1},b)|<|\bb|\le |\mathcal L(T_l,b)|$ satisfies \begin{equation}\label{eqfull6} |\aaa|+|\bb|\le |\mathcal L(S_l,a)|+|\mathcal L(T_l, b)|.\end{equation}
With an obvious generalization of the notion of a neutral pair, if the families $\mathcal L(|\aaa|,a)$, $\mathcal L(|\bb|,b)$ have characteristic sets $S,T$, then we have equality in \eqref{eqfull6} if and only if $S,T$ is a $T_l$-neutral pair.

2. The same conclusion could be made with $|\mathcal B|$, $|\mathcal L(T_{l-1},b)|$ $|\mathcal L(T_l,b)|$ replaced with $C|\mathcal B|$, $C|\mathcal L(T_{l-1},b)|$ $C|\mathcal L(T_l,b)|$, where $C$ is a constant, $C<\frac{n-b-2}{a-2}.$

3. Denote $t:=a-b-1$. For ${n+t-1\choose a-2}<|\bb|<{n+t\choose a-1}$ we have \begin{small}$$|\aaa|+|\bb|<{n\choose a}-{n+t\choose a}+{n+t\choose a-1} = {n\choose a}-{n+t-1\choose a}+{n+t-1\choose a-2}.$$\end{small}
For both $T_{m+1}=[b+1-a]$, $T_m=[b+2-a]$ and the corresponding $S_{m+1},\ S_m$ we have equality in the inequality above for $\mathcal L(S_i,a), \mathcal L(T_i,b)$, $i=m,m+1$ (note that the second family has cardinality ${n+t\choose a-1}$ and ${n+t-1\choose a-2}$, respectively).
\end{thm}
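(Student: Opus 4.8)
The plan is to run exactly the same machinery as in the proof of Theorems~\ref{thmfull1}, \ref{thmfull2}, \ref{thmfulleq}, \ref{thmfullw} and Proposition~\ref{propfulleq}, only keeping track of the two parameters $a<b$ instead of $k-1<k$. The role of the ``shift by one in the uniformity'' is now played by the shift $t=a-b-1$, and wherever a binomial ratio ${n-i\choose s_a}/{n-i\choose s_b}$ with $s_a+s_b=2k-i$ appeared we now have $s_a+s_b=a+b-i$ with $s_a>s_b$; the condition $n>a+b$ plays the role of $|[2,n]|\ge k+(k-1)$ and guarantees that all the biregular bipartite graphs in Lemmas~\ref{lemdiv} and~\ref{lemres} have their larger part on the ``$\mathcal A$-side'' (the part indexed by the $a$-sets), so the unique maximal independent set is that part. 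Concretely, I would: (i) reduce to lexicographic families by Theorem~\ref{thmHil}; (ii) reduce to a strongly intersecting pair of characteristic sets via Proposition~\ref{cross2}; (iii) state and prove the verbatim analogue of Lemma~\ref{lemdiv} — if $S,T$ is not $(a,b)$-resistant, witnessed by some $b-a+2\le i\le j$ with $|[i]\cap S|-a\ge|[i]\setminus S|-b$ (equivalently $|[i]\setminus T|\le|[i]\cap T|$ after accounting for the offset), then the families $\mathcal P_a,\mathcal P_b$ of sizes ${n-i\choose a-|[i]\cap S|}$, ${n-i\choose b-|[i]\setminus S|}$ satisfy $|\mathcal P_a|\ge|\mathcal P_b|$, so one may push $T$ down to $T\cap[i]$ without decreasing $|\aaa|+|\bb|$ and strictly increasing $|\bb|$; (iv) state and prove the analogue of Lemma~\ref{lemres} for $(a,b)$-resistant pairs, with the same Case~1 / Case~2 split ($T'\nsupseteq T$ versus $T'\supseteq T$), obtaining the strict monotonicity \eqref{eqfull5}; (v) assemble these as in the proof of Theorem~\ref{thmfull2} to get \eqref{eqfull6}; (vi) repeat the argument of Theorem~\ref{thmfulleq} to identify the equality cases as the $T_l$-neutral pairs, where the recursive rule ``add the element $2|T'|$'' is replaced by ``add the element making $|[x]\cap S|-a=|[x]\setminus S|-b$'', i.e.\ $x=|S|+|T|-2$ at the current step, which is the natural $(a,b)$-analogue of $x=2|T'|$.

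For part~2, I would reread the proof of Theorem~\ref{thmfullw}: in each graph $G$ occurring in Lemma~\ref{lemres} the part sizes are ${n-i\choose s_a}$ and ${n-i\choose s_b}$ with $s_a+s_b=a+b-i$, $s_a>s_b$, and $s_b\le a-2$; hence
\[
\frac{{n-i\choose s_a}}{{n-i\choose s_b}}\ \ge\ \frac{n-i-s_b}{s_a}\ =\ \frac{n-a-b+2s_a}{s_a}\ \ge\ \frac{n-b-2}{a-2}.
\]
So any constant $C<\frac{n-b-2}{a-2}$ may be placed as a weight on the $\mathcal P_b$-side without changing which part is the heaviest independent set, and the monotonicity \eqref{eqfull5} and the bound \eqref{eqfull6} go through with $|\mathcal B|$ replaced by $C|\mathcal B|$ everywhere. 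For part~3, this is the $(a,b)$-analogue of Proposition~\ref{propfulleq}: I would extend the list of neutral pairs by one more step, starting from $T_{m+1}=[b+1-a]$, and observe that for $|\bb|$ strictly between ${n+t-1\choose a-2}=|\mathcal L(T_{m+1},b)|$ (wait — this equals $|\mathcal L(T_m,b)|$; I mean between the cardinalities corresponding to $T_{m+1}$ and $T_{m}$) the only way to have equality in the bound would be via a neutral pair, which forces $T$ to equal $T_m$ or $T_{m+1}$, contradicting strict inclusion between the two; hence the inequality is strict, and the two displayed closed forms are just two cascade expansions of the same quantity, obtained by the same ``Pascal's-rule'' rewriting ${n+t\choose a}-{n+t\choose a-1}={n+t-1\choose a}-{n+t-1\choose a-2}$ (after moving ${n\choose a}$ to the other side) used in Proposition~\ref{prop9}'s discussion.

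The genuinely new bookkeeping — and the main obstacle — is getting the inequalities in the definition of an $(a,b)$-resistant pair to line up correctly with the biregularity conditions. In the $a=k-1,b=k$ case the two offsets differ by exactly one, so ``more elements of $[i]$ in $T$ than in $S$'' and ``$s_b\ge s_a$'' were literally the same statement; with a general gap $b-a$ one must be careful that condition~2, $|[i]\cap S|-a<|[i]\setminus S|-b$, is precisely the condition $s_a>s_b$ (with $s_a=a-|[i]\cap S|$, $s_b=b-|[i]\setminus S|$ and $|[i]\cap S|+|[i]\setminus S|=i$), and that the threshold $i\ge b-a+2$ is exactly what is needed for the pair $T=[b-a+2],S=\{1,b-a+2\}$ to be the ``last'' resistant pair (the analogue of $\{2,3,4\},\{1,4\}$) and for $T_{m+1}=[b-a+1]$ to be the analogue of $\{2,3\}$. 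Once this dictionary is fixed, every lemma, every bipartite-graph argument, and every inductive equality analysis from the $a=k-1$ case transcribes without change, which is why the theorem can be stated with the proof omitted.
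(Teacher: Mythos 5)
The paper itself omits the proof of Theorem~\ref{thmfullcri}, asserting only that it is a ``straightforward generalization'' of Theorems~\ref{thmfull2},~\ref{thmfulleq},~\ref{thmfullw} and Proposition~\ref{propfulleq}; your plan is a correct realization of exactly that generalization, with the right $(a,b)$-dictionary replacing the $(k-1,k)$-dictionary, the right sequence of reductions (Theorem~\ref{thmHil}, Proposition~\ref{cross2}, then $(a,b)$-analogues of Lemmas~\ref{lemdiv} and~\ref{lemres}), and the right identification of the equality cases and of the endpoint pairs $T_m,T_{m+1}$ in part~3. Two small slips in the part~2 computation, neither of which affects the logic: since $s_a+s_b=a+b-i$ one has $n-i-s_b=n-a-b+s_a$ (not $n-a-b+2s_a$), and the quantity whose upper bound is actually needed to conclude $\frac{n-a-b+s_a}{s_a}\ge\frac{n-b-2}{a-2}$ is $s_a\le a-2$ (not $s_b\le a-2$); the latter holds because in every graph arising in the Lemma~\ref{lemres} analogue the index $i$ satisfies $1,i\in S$, so $|[i]\cap S|\ge 2$ and hence $s_a=a-|[i]\cap S|\le a-2$.
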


This theorem generalizes and strengthens many results on cross-intersecting families, in particular, the theorem for cross-intersecting families proven in \cite{KZ} and the following theorem due to Frankl and Tokushige \cite{FT}

\begin{thm}[Frankl, Tokushige, \cite{FT}]\label{lemft} Let $n > a+b$, $a\le b$, and suppose that families $\mathcal F\subset{[n]\choose a},\mathcal G\subset{[n]\choose b}$ are cross-intersecting. Suppose that for some real number $\alpha \ge 1$ we have  ${n-\alpha \choose n-a}\le |\mathcal F|\le {n-1\choose n-a}$. Then \begin{equation}\label{eqft}|\mathcal F|+|\mathcal G|\le {n\choose b}+{n-\alpha \choose n-a}-{n-\alpha \choose b}.\end{equation}
\end{thm}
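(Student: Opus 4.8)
The plan is to derive the Frankl--Tokushige bound from part 1 (and part 2) of Theorem~\ref{thmfullcri}. First I would normalize the setup: given cross-intersecting $\mathcal F\subset\binom{[n]}{a}$, $\mathcal G\subset\binom{[n]}{b}$ with $a\le b$, apply Theorem~\ref{thmHil} to replace $\mathcal F,\mathcal G$ by the lexicographical families $\mathcal L(|\mathcal F|,a)$, $\mathcal L(|\mathcal G|,b)$ of the same sizes; this preserves cross-intersection and both cardinalities, so it suffices to bound $|\mathcal F|+|\mathcal G|$ for lex families. By Proposition~\ref{cross2} I may further assume the characteristic sets $S$ (of $\mathcal F$) and $T$ (of $\mathcal G$) strongly intersect in their last element. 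Here the roles are swapped relative to the body of the section: now $\mathcal F$ is the ``small-size'' family of $a$-sets playing the role of $\aaa$, and $\mathcal G$ the family of $b$-sets playing the role of $\bb$, and we are given a \emph{lower} bound ${n-\alpha\choose n-a}\le|\mathcal F|$ rather than on $|\mathcal G|$. So the first genuine step is to translate the hypothesis ``$|\mathcal F|\ge{n-\alpha\choose n-a}$'' into a statement about where $T$ sits in the lex order, equivalently an \emph{upper} bound on $|\mathcal G|$: since $S,T$ strongly intersect in their last coordinate, $|\mathcal F|=|\mathcal L(S,a)|$ large forces $S$ large in lex, hence $T$ small, hence $|\mathcal G|=|\mathcal L(T,b)|$ is bounded above by the size of the lex family whose characteristic set is the $T$ paired with $S=[n-\alpha+1,\,n]\cap\!$(appropriate tail) — concretely, when $\alpha=u$ is an integer, $|\mathcal F|={n-u\choose n-a}$ corresponds to $S$ with $S\cap[2,n]$ equal to the last $a-1$ elements together with one earlier element, and the paired $T$ gives $|\mathcal G|$ exactly ${n\choose b}-{n-u\choose n-a}$ type quantities; I would verify that the extremal $(a,b)$-resistant pair $T_l,S_l$ with $|\mathcal L(S_l,a)|$ just below ${n-\alpha\choose n-a}$ is precisely the one realizing the right-hand side of \eqref{eqft}.

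Next I would invoke Theorem~\ref{thmfullcri}: among all cross-intersecting lex pairs with $|\mathcal G|$ in the relevant range $|\mathcal L(T_{l-1},b)|<|\mathcal G|\le|\mathcal L(T_l,b)|$, the sum $|\mathcal F|+|\mathcal G|$ is maximized by the $(a,b)$-resistant pair $\mathcal L(S_l,a),\mathcal L(T_l,b)$, and \eqref{eqfull5} says these extremal sums are monotone in $l$. Therefore, to prove \eqref{eqft} it suffices to check the inequality for the single extremal resistant pair $(S_l,T_l)$ that contains the threshold $|\mathcal F|={n-\alpha\choose n-a}$ in its window, and then monotonicity handles all smaller $|\mathcal F|$ (equivalently larger $|\mathcal G|$). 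For integer $\alpha=u$ the extremal pair is exactly the characteristic-set pair computed earlier (the one whose sum is ${n\choose b}+{n-u\choose n-a}-{n-u\choose b}$); here I would do the short binomial-coefficient identity, exactly in the style of the Reduction-of-Theorem~\ref{thmfull1}-to-\ref{thmfull2} computation, expanding $|\mathcal L(S_l,a)|$ and $|\mathcal L(T_l,b)|$ in cascade form and collapsing the telescoping sum to $\binom{n}{b}+\binom{n-u}{n-a}-\binom{n-u}{b}$. For non-integer $\alpha$, I would use that the right-hand side of \eqref{eqft} is a monotone decreasing function of $|\mathcal F|$ (equivalently of $\alpha$): since the bound holds at every ``breakpoint'' $\alpha=u$ where it is sharp, and the true maximum of $|\mathcal F|+|\mathcal G|$ as a function of the lower bound on $|\mathcal F|$ is a step function changing only at resistant values, any monotone decreasing upper bound that is valid at the breakpoints is valid everywhere in between — this is precisely the remark made right after the proof of the Proposition comparing Theorems~\ref{thmfull1} and~\ref{thm1}.

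The main obstacle I anticipate is the bookkeeping of the ``role swap'': in Theorem~\ref{thmfullcri} the controlled family is $\mathcal B$ (the $b$-sets) with a \emph{lower} bound driving the extremal pair, whereas Frankl--Tokushige fixes a \emph{lower} bound on the \emph{smaller} family $\mathcal F$ of $a$-sets. I need to argue carefully that, under strong intersection in the last element, a lower bound on $|\mathcal L(S,a)|$ is equivalent to an upper bound on $|\mathcal L(T,b)|$, and that the $(a,b)$-resistant pair selected by ``$|\mathcal G|\le|\mathcal L(T_l,b)|$'' is the same one selected by ``$|\mathcal F|\ge{n-\alpha\choose n-a}$''; this amounts to checking that the map $l\mapsto(|\mathcal L(S_l,a)|,|\mathcal L(T_l,b)|)$ is strictly monotone in both coordinates (increasing in the first, decreasing in the second), which is exactly the content of \eqref{eqfull5} together with $|\mathcal L(T_{l-1},b)|<|\mathcal L(T_l,b)|$. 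Once that dictionary is set up, the remaining work is the routine cascade-form identity and the monotonicity-at-breakpoints argument, both of which are already exemplified in the section. A secondary, minor point is to confirm the side condition: Theorem~\ref{thmfullcri} requires $n>a+b$, which is exactly the hypothesis of Theorem~\ref{lemft}, so no extra range restriction is lost.
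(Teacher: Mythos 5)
The paper itself gives no proof of this statement beyond the one-line remark that ``the deduction is similar to the one we made for Theorem~\ref{thm1} from Theorem~\ref{thmfull1}'', so you are necessarily fleshing out a terse pointer. Your high-level framework — reduce to lexicographic pairs, reduce to a maximal pair via Proposition~\ref{cross2}, invoke the monotonicity~\eqref{eqfull5} of Theorem~\ref{thmfullcri} and then the ``valid at breakpoints plus monotone'' argument from the proof of the Proposition — is exactly what the paper is gesturing at, and the binomial-coefficient bookkeeping you describe for integer $\alpha$ is routine.

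However, the way you propose to handle the role swap has a genuine gap. You keep $\mathcal F$ in the role of $\aaa$ and $\mathcal G$ in the role of $\bb$, convert the lower bound on $|\mathcal F|$ into an upper bound on $|\mathcal G|$, and then try to read off a bound from Theorem~\ref{thmfullcri}. This cannot work as stated, for two related reasons. First, Theorem~\ref{thmfullcri} only controls the range $|\bb|\le|\mathcal L(T_{m+1},b)|={n+a-b-1\choose a-1}$; in the Frankl--Tokushige extremal configuration $\mathcal G=\{B:B\cap[u]\ne\emptyset\}$ one has $|\mathcal G|={n\choose b}-{n-u\choose b}$, which is far above that ceiling, so the relevant pairs simply fall outside the windows the theorem covers. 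Second, your claim that ``the $(a,b)$-resistant pair selected by $|\mathcal G|\le|\mathcal L(T_l,b)|$ is the same one selected by $|\mathcal F|\ge{n-\alpha\choose n-a}$'' is false: the Frankl--Tokushige extremal has characteristic sets $S=[u]$, $T=\{u\}$, and for $u\ge b-a+2$ condition~2 of the $(a,b)$-resistance definition ($|[i]\cap S|-a<|[i]\setminus S|-b$) fails outright since $|[i]\cap S|=i$ and $|[i]\setminus S|=0$. Those pairs are the \emph{wrong kind} of maximal pair for the unswapped Theorem~\ref{thmfullcri}: its resistant pairs are the ones where $T=[j]$, $S=\{j\}$ (the $b$-family is the one containing a fixed $j$-set), which is precisely the content of the Corollary following it, not of Frankl--Tokushige. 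The correct fix is to apply Theorem~\ref{thmfullcri} with the roles of $a$ and $b$ exchanged, so that the $a$-family $\mathcal F$ plays the role of $\bb$ and the constraint lands on it; under that exchange the sets $T=[u]$, $S=\{u\}$ do satisfy the (now $(b,a)$-) resistance condition and the monotonicity-at-breakpoints argument goes through exactly as you describe. So your write-up identifies the obstacle but resolves it the wrong way; redo the step with $a\leftrightarrow b$ swapped inside Theorem~\ref{thmfullcri}, verify the resistance of $T=[u]$, $S=\{u\}$ under the swapped definition, and the rest of your outline is fine.
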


The deduction is similar to the one we made for Theorems~\ref{thm1} from Theorem~\ref{thmfull1}.\\

One easy corollary of (\eqref{eqfull5} and part 3 of) Theorem~\ref{thmfullcri}, which also appeared in \cite{KZ} and other places, is as follows:
\begin{cor}[\cite{KZ}]
  Let $a,b>0$, $n>a+b$. Let $\aaa\subset {[n]\choose a},\ \bb\subset {[n]\choose b}$ be a pair of cross-intersecting families. Then, if $|\bb|\le {n+a-b-1\choose a-1}$, then
\begin{equation}\label{eqcreasy} |\aaa|+|\bb|\le {n\choose a}.\end{equation}
Moreover, the displayed inequality is strict unless $|\bb|=0$.

If ${n-j\choose b-j}\le |\bb|\le {n+a-b-1\choose a-1}$ for $j\in [b-a+1, b]$, then
\begin{equation}\label{eqcreasy2} |\aaa|+|\bb|\le {n\choose a}-{n-j\choose a}+{n-j\choose b-j}.\end{equation}
Moreover, if the left inequality on $\bb$ is strict, then the inequality in the displayed formula above is also strict.
\end{cor}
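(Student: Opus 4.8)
The plan is to derive this corollary directly from Theorem~\ref{thmfullcri}, so essentially no new ideas are needed --- it is a matter of plugging the right values into parts 1 and 3 of that theorem and checking the two displayed formulas. First I would handle \eqref{eqcreasy}. The quantity ${n+a-b-1\choose a-1}$ is exactly the size of the lexicographic family $\mathcal L(T_{m+1},b)$, where $T_{m+1}=[b+1-a]$ is the extremal set introduced just before Theorem~\ref{thmfullcri} (note $S_{m+1}=\{1,b+1-a\}$, so $\mathcal L(S_{m+1},a)=\emptyset$). Indeed, counting the $b$-sets $B$ with $B\cap[b+1-a]<\{1,b+1-a\}\cap[2,n]$ in lex gives precisely ${n-(b+1-a)\choose a-1}={n+a-b-1\choose a-1}$, which matches the bound on $|\bb|$. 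Hence $|\bb|\le |\mathcal L(T_{m+1},b)|$, and by \eqref{eqfull6} (applied with $l=m+1$, using that $T_{m+1}$ plays the role of the final resistant-type set, exactly as $\{2,3\}$ did in Proposition~\ref{propfulleq}) we get $|\aaa|+|\bb|\le |\mathcal L(S_{m+1},a)|+|\mathcal L(T_{m+1},b)| = 0 + {n\choose a}$, since $\mathcal L(T_{m+1},b)$ is cross-intersecting with $\mathcal L(\{1,\ldots\},a)=\mathcal L(\emptyset\text{-type set},a)$, i.e.\ with all of ${[n]\choose a}$. Wait --- more carefully: the strongly-intersecting partner of $T_{m+1}=[b+1-a]$ that saturates is $S_{m+1}$ with $\mathcal L(S_{m+1},a)={[n]\choose a}$ when $S_{m+1}=\{1\}$; in any case the pair achieving equality has $|\aaa|+|\bb|={n\choose a}$, so \eqref{eqcreasy} follows. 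For strictness when $|\bb|>0$: unless $|\bb|$ equals exactly $|\mathcal L(T_{m+1},b)|$, the inequality \eqref{eqfull6} combined with the strict monotonicity \eqref{eqfull5} forces a strict inequality; and when $|\bb|=|\mathcal L(T_{m+1},b)|$ one checks the equality case of Theorem~\ref{thmfullcri} part 1 shows $\aaa$ must then have $|\aaa|<{n\choose a}$ unless $\bb$ is empty. This last point is where I expect to need a small amount of care --- pinning down that the only equality configuration in \eqref{eqcreasy} with $\aaa={[n]\choose a}$ is $\bb=\emptyset$.

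Next I would prove \eqref{eqcreasy2}. Here the hypothesis ${n-j\choose b-j}\le|\bb|\le{n+a-b-1\choose a-1}$ places $|\bb|$ in the range $[\,|\mathcal L(T_m,b)|,\,|\mathcal L(T_{m+1},b)|\,]$ for appropriate $j$ --- indeed, writing the $(n-b\text{-cascade or analogous})$ expansion, the $(a,b)$-resistant sets $T_l$ with small index near $m$ correspond precisely to families of size ${n-j\choose b-j}$ for $j$ ranging over $[b-a+1,b]$ (this mirrors the computation ``$\gamma_{k-2}=n-k$'' etc.\ done for the diversity version, and the explicit sizes of $\mathcal L(T_m,b)$, $\mathcal L(T_{m+1},b)$ listed in the final sentence of part 3). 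Then \eqref{eqfull6}, or directly the inequality in part 3 of Theorem~\ref{thmfullcri}, yields $|\aaa|+|\bb|\le {n\choose a}-{n-j\choose a}+{n-j\choose b-j}$: one has to identify ${n+t\choose a}$ with ${n-j\choose a}$ etc.\ via $t=a-b-1$ and the substitution $j=b-(\text{something})$, which is a routine binomial-coefficient identity match. Strictness when ${n-j\choose b-j}<|\bb|$ again comes from \eqref{eqfull5}: if $|\bb|$ is strictly above the lower endpoint $|\mathcal L(T_l,b)|$ but within the $l$-th window, the maximal sum is attained at $T_l$ with a strictly larger $|\bb|$-value, and the monotone decrease of the right-hand side as the window index increases forces the strict inequality.

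The main obstacle, such as it is, is purely bookkeeping: matching the clean binomial expressions ${n+a-b-1\choose a-1}$ and ${n-j\choose b-j}$ appearing in the corollary with the lexicographic family sizes $|\mathcal L(T_l,b)|$ and the boundary sets $T_m=[b+2-a]$, $T_{m+1}=[b+1-a]$ from Theorem~\ref{thmfullcri}, and being careful with the off-by-one in the range $j\in[b-a+1,b]$ versus the indexing of resistant pairs. I would carry this out by first writing down $|\mathcal L([r],b)|={n-r\choose b-r+1}$ for the initial-segment characteristic sets (the same computation used in the ``Reduction of Theorem~\ref{thmfull1}'' paragraph), then reconciling exponents. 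No genuinely new argument is required --- everything is a specialization of the already-proven Theorem~\ref{thmfullcri}, exactly as the excerpt indicates (``One easy corollary...'').
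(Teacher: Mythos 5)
Your high-level plan — specialize parts 1 and 3 of Theorem~\ref{thmfullcri} together with the monotonicity \eqref{eqfull5}, identifying the relevant resistant pairs with initial segments $[j]$ — is the approach the paper gestures at (it offers no explicit proof, only the remark that the values $\binom{n-j}{b-j}$ correspond to resistant pairs). However, several concrete claims in your write-up are wrong in ways that would derail the proof.

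The central error is your identification of the extremal pair at $T_{m+1}=[b+1-a]$. You write that $\mathcal L(S_{m+1},a)=\emptyset$ and that $|\mathcal L(S_{m+1},a)|+|\mathcal L(T_{m+1},b)|=0+\binom{n}{a}$, while a few lines earlier you correctly noted $|\mathcal L(T_{m+1},b)|=\binom{n+a-b-1}{a-1}$; these cannot both hold. In fact $\mathcal L(T_{m+1},b)=\{B:[b+1-a]\subset B\}$ of size $\binom{n+a-b-1}{a-1}$, the strongly-intersecting partner is $S_{m+1}=\{b+1-a\}$ with $\mathcal L(S_{m+1},a)=\{A:A\cap[b+1-a]\ne\emptyset\}$ of size $\binom{n}{a}-\binom{n+a-b-1}{a}\ne 0$, and the sum is $\binom{n}{a}-\binom{n+a-b-1}{a}+\binom{n+a-b-1}{a-1}$, which is \emph{strictly less} than $\binom{n}{a}$ because $n>a+b$ forces $\binom{n+a-b-1}{a}>\binom{n+a-b-1}{a-1}$. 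So the "pair achieving equality" in \eqref{eqcreasy} you describe does not exist: the bound $\binom{n}{a}$ is never attained by any maximal cross-intersecting lex pair with $\bb\ne\emptyset$. The correct and simpler route to strictness is that $|\mathcal L(S_0,a)|+|\mathcal L(T_0,b)|=\binom{n}{a}+0$, so \eqref{eqfull5} (chained down to $l=0$, and using that the $l=m$ and $l=m+1$ sums coincide by part 3) gives $|\mathcal L(S_l,a)|+|\mathcal L(T_l,b)|<\binom{n}{a}$ for every $l\ge1$; any $\bb\ne\emptyset$ lands in one of these windows and inherits a strict bound $<\binom{n}{a}$. Your concern about "pinning down the only equality configuration with $\aaa=\binom{[n]}{a}$" is therefore a red herring created by the miscomputation.

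For \eqref{eqcreasy2}, your claim that the hypothesis places $|\bb|$ in $[\,|\mathcal L(T_m,b)|,\,|\mathcal L(T_{m+1},b)|\,]$ is also wrong: for $j$ near $b$, $\binom{n-j}{b-j}$ is close to $1$, far below $|\mathcal L(T_m,b)|=\binom{n+a-b-2}{a-2}$, so the range can span many windows. Likewise "the $(a,b)$-resistant sets $T_l$ with small index near $m$ correspond precisely to families of size $\binom{n-j}{b-j}$" is imprecise — the initial-segment pairs $T=[j]$, $S=\{j\}$ are resistant (or the special $T_m$, $T_{m+1}$) for $j\in[b-a+1,b]$, but there are other resistant pairs interleaved between them. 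The correct argument is: the window index $l$ determined by $|\bb|$ satisfies $l\ge l_j$ where $T_{l_j}=[j]$, because $|\bb|\ge\binom{n-j}{b-j}=|\mathcal L(T_{l_j},b)|$; then \eqref{eqfull6} (or part 3 if $l=m+1$) plus \eqref{eqfull5} from $l$ down to $l_j$ gives $|\aaa|+|\bb|\le|\mathcal L(S_{l_j},a)|+|\mathcal L(T_{l_j},b)|=\binom{n}{a}-\binom{n-j}{a}+\binom{n-j}{b-j}$, with strictness if $|\bb|>\binom{n-j}{b-j}$. Your sketch has the right shape but the bookkeeping you deferred is precisely where it currently breaks.
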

Note that the values $|\bb| = {n-j\choose b-j}$ correspond to resistant pairs in Theorem~\ref{thmfullcri}.

\section{Beyond Hilton-Milner theorem}\label{sec4}

Several authors aimed to determine {\it precisely}, what are the largest intersecting families in ${[n]\choose k}$ with certain restrictions. One of such questions was studied by Han and Kohayakawa, who determined precisely, what is the largest family of intersecting families that is neither contained in the Erd\H os-Ko-Rado family, nor in the Hilton-Milner family. In our terms, the question is simply as follows: what is the largest intersecting family with $\gamma(\ff)\ge 2$? The proof of Han and Kohayakawa is quite technical and long. Kruskal-Katona-type arguments allow for a very short and simple proof in the case $k\ge 5$. For $i\in[k]$ let us put
$$\mathcal J_i:=[2,k+1]\cup [i+1,k+i]\cup \big\{F\in{[n]\choose k}:1\in F, F\cap [2,k+1]\ne \emptyset, F\cap [i+1,k+i]\ne \emptyset\big\}.$$
We note that $\mathcal J_i\subset{[n]\choose k}$ and that $\mathcal J_i$ are intersecting. Moreover, $\gamma(\mathcal J_i) = 2$ for $i>1$ and $\mathcal J_1$ is the Hilton-Milner family.

\begin{thm}[\cite{HK}]\label{thmhk} Let $n>2k$, $k\ge 4$. Then any intersecting family $\ff$ with $\gamma(\ff)\ge 2$ satisfies
\begin{equation}\label{eqhk}|\ff|\le {n-1\choose k-1}-{n-k-1\choose k-1}-{n-k-2\choose k-2}+2,\end{equation}
moreover, for $k\ge 5$ the equality is attained only on the families isomorphic to $\mathcal J_2$.
\end{thm}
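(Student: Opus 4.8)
The plan is to reduce Theorem~\ref{thmhk} to the cross-intersecting/diversity framework already developed in Section~\ref{sec3}, and then read off the bound from (a special case of) Theorem~\ref{thmfull2}, together with a separate argument for the equality case. First I would perform the standard reduction: given an intersecting $\ff\subset{[n]\choose k}$ with $\gamma(\ff)\ge 2$, pick an element of maximum degree, call it $1$, and split $\ff$ into $\aaa:=\ff(1)\subset{[2,n]\choose k-1}$ and $\bb:=\ff(\bar 1)\subset{[2,n]\choose k}$. These are cross-intersecting, $|\bb|=\gamma(\ff)\ge 2$, and $|\ff|=|\aaa|+|\bb|$. By Theorem~\ref{thmHil} we may replace $\aaa,\bb$ by the lexicographically-largest families $\mathcal L(|\aaa|,k-1),\mathcal L(|\bb|,k)$ of the same sizes, which only can increase $|\aaa|$ while keeping cross-intersection and $|\bb|$ fixed, so it suffices to bound $|\aaa|+|\bb|$ for lex families with $|\bb|\ge 2$.

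Next I would identify the relevant resistant pair. For $\gamma(\ff)\ge 2$ the smallest admissible $|\bb|$ is $2$, and the resistant number $\gamma=2$ corresponds, via the dictionary in the ``Reduction of Theorem~\ref{thmfull1}'' paragraph, to the characteristic set $T=S_{\gamma}$ with $|\mathcal L(T,k-1)|=2$; explicitly the pair is the one with $T$ equal to $[2,k+1]$ with one element (namely $k+1$) replaced appropriately — concretely the family $\mathcal L(T,k)$ of size $2$ consists of $[2,k+1]$ and the next set in lex order, which is $[2,k]\cup\{k+2\}$, i.e. $\mathcal J_2$ restricted to sets avoiding $1$ has exactly these two sets. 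Applying Theorem~\ref{thmfull2} with $l$ the index of this resistant pair (using that for $2\le\gamma(\ff)$ we are in the range $|\mathcal L(T_{l-1},k)|<\gamma(\ff)\le|\mathcal L(T_l,k)|$ with $|\mathcal L(T_l,k)|=2$), we get $|\ff|\le|\mathcal L(S_l,k-1)|+|\mathcal L(T_l,k)|$. It then remains to check that this quantity equals the right-hand side of \eqref{eqhk}, i.e. that $|\mathcal L(S_l,k-1)|={n-1\choose k-1}-{n-k-1\choose k-1}-{n-k-2\choose k-2}$ and the trailing $+2$ is $|\mathcal L(T_l,k)|$; this is a direct binomial computation counting the $(k-1)$-sets in $[2,n]$ lex-below $S\cap[2,n]$, which one does by splitting on the first coordinate (sets containing $2$, sets containing $3$ but not $2$, etc.). The condition $k\ge4$ is exactly what is needed for $\gamma=2$ to be resistant (recall $\gamma_i=i$ for $i\le k-3$, so $2\le k-3$ iff $k\ge5$, with $k=4$ handled since $\gamma_{k-2}=\gamma_2$ is still covered by the theorem's range, or by the exceptional conditions).

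For the equality case when $k\ge5$, I would invoke Theorem~\ref{thmfulleq}: equality in \eqref{eqfull3} for a lex cross-intersecting pair with $T_{l-1}<T\le T_l$ holds precisely for $T_l$-neutral pairs $S,T$. So I must enumerate the $T_l$-neutral pairs for this particular $l$ (the one with $|\mathcal L(T_l,k)|=2$) and translate each back into an intersecting family via the inverse of the $\ff\mapsto(\aaa,\bb)$ correspondence, then argue that all of them are isomorphic (as unlabelled set systems) to $\mathcal J_2$. The neutral pairs are obtained from $T_l$ by appending the element $2|T'|$ repeatedly; since $|T_l|$ is already close to $k$ and $n>2k$, only finitely many such extensions keep $|T|\le k$, and I would check they correspond to the same family up to relabelling — here the role of $k\ge5$ (vs. $k=4$) is that for $k=4$ an extra sporadic extremal configuration appears, which is why the uniqueness statement is restricted. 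Finally, I should not forget families $\ff$ for which the maximum-degree element is not unique or where $\aaa$ is not a genuine lex family before the shift — but the reduction via Theorem~\ref{thmHil} shows any extremal $\ff$ must, after compression, coincide with the lex pair, and un-compressing an extremal lex configuration back to an intersecting family can only produce something isomorphic to $\mathcal J_2$.

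The main obstacle I anticipate is the equality analysis: Theorem~\ref{thmfulleq} is stated for lex cross-intersecting pairs, so pinning down that the \emph{original} intersecting family (before applying the Kruskal--Katona shift) must itself be isomorphic to $\mathcal J_2$ — rather than merely having the same cardinality profile as the shifted extremal pair — requires the standard but slightly delicate ``reverse compression'' argument showing that equality in Theorem~\ref{thmHil} forces the family to already be shifted, and handling the choice of the max-degree vertex. The cardinality computation and the verification that $\gamma=2$ sits in the claimed resistant range are routine by comparison.
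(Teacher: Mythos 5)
Your bound argument is essentially the paper's: reduce to a lex cross-intersecting pair and apply the diversity machinery of Section~\ref{sec3} with the resistant pair corresponding to $\gamma=2$. Two points of sloppiness there, though. First, the range condition you quote, $|\mathcal L(T_{l-1},k)|<\gamma(\ff)\le|\mathcal L(T_l,k)|=2$, only places you in the right window when $\gamma(\ff)=2$; for $\gamma(\ff)>2$ you must separately invoke the strict monotonicity \eqref{eqfull2} (equivalently, that the RHS of \eqref{eqfull1} strictly decreases in $l$), which the paper does explicitly and which is also what feeds the ``equality forces $\gamma=2$'' part of the uniqueness claim. Second, for $k=4$ the number $2$ is not resistant ($\gamma_1=1$, $\gamma_2=n-4$), so the theorem you cite gives a bound at $\gamma_2=n-4$ and you still owe the computation that the corresponding lex family has the same cardinality as $\mathcal J_2$; your parenthetical hand-waves past this.

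The equality case is where your route genuinely departs from the paper's, and it is where the real gap lies. You propose to characterize extremal lex pairs via Theorem~\ref{thmfulleq} ($T_l$-neutral pairs) and then ``reverse-compress'' to conclude the original family must be $\mathcal J_2$. But Theorem~\ref{thmHil} is an implication, not an inequality; there is no meaningful notion of ``equality in Theorem~\ref{thmHil}'' from which to deduce that the family was already lex or shifted, and in general a family can achieve the numerical extremum without being a shift of a lex family. Pinning down all extremizers by inverting Kruskal--Katona is a substantive problem in its own right, not a standard step you can wave at. The paper avoids all of this: once one knows equality forces $\gamma(\ff)=2$, any maximal intersecting family with $\gamma=2$ is determined up to isomorphism by the intersection size of its two non-star sets, hence is some $\mathcal J_i$ with $i\in[2,k]$, and a one-line inclusion--exclusion shows $|\mathcal J_i|$ is strictly maximized at $i=2$. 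That elementary enumeration is both simpler and actually closes the uniqueness argument; as written, your Theorem~\ref{thmfulleq}-based route does not.
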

We note that Han and Kohayakawa proved their theorem for $k=3$, and also resolved the uniqueness case for $k=4$. Unfortunately, this cannot be done in a simple way using our methods. A slightly weaker version of the theorem above (without uniqueness) is a consequence of one of the results obtained by Hilton and Milner \cite{HM} (see \cite{HK}).

It is not so difficult to deduce Theorem~\ref{thmhk} from Theorem~\ref{thmHil} directly, but using Theorems~\ref{thmfull1}, \ref{thmfull2} makes it even easier.
\begin{proof} In terms of Theorem~\ref{thmfull1}, we know that $\gamma_i = i$ for $i\in[k-3]$, and $\gamma_{k-2} = n-k$. Substituting $l=2$ for $k\ge 5$ in \eqref{eqfull1}, we get
$$|\ff|\le {n-2\choose n-k}+{n-3\choose n-k-1}+\ldots+{n-k\choose n-2k+2}+{n-k-2\choose n-2k+1}+2 = $$
$${n-1\choose n-k}-{n-k\choose n-2k+1}+{n-k-2\choose n-2k+1}+2 = {n-1\choose n-k}-{n-k-1\choose n-2k}-{n-k-2\choose n-2k}+2,$$
which is exactly the right hand side of \eqref{eqhk}. We know that this is sharp, due to an example isomorphic to $\mathcal J_2$ (it is also isomorphic to the corresponding example from Theorem~\ref{thmfull1}).

Since the right hand side of \eqref{eqfull1} strictly decreases as $l$ increases, we may conclude that for $k\ge 5$ any family $\ff$ with $\gamma(\ff)>\gamma_2 = 2$ will have strict inequality in \eqref{eqfull1} with $l=2$, and thus a strict inequality in \eqref{eqhk}. Moreover, for $k=4$ the lexicographic family with diversity $\gamma_2=n-4$ has the same cardinality as $\mathcal J_2$, displayed above (due to Theorem~\ref{thmfulleq}, or via direct calculation), and no family with $\gamma>\gamma_1$ can have larger cardinality.

Therefore, the bound \eqref{eqhk} is proven, and to complete the proof, we should only show that for $k\ge 5$ among the intersecting families $\ff\subset {[n]\choose k}$ with $\gamma(\ff)=2$ all families achieving equality in \eqref{eqhk} are isomorphic to $\mathcal J_2$. This could be done by a simple computation.

Any (maximal) intersecting family $\ff\subset {[n]\choose k}$ with $\gamma(\ff)=2$ is uniquely determined by the intersection of the two sets $A, B$, which are not containing the most popular element, and thus is isomorphic to one of the $\mathcal J_i$, $i\in[k]$.

 Using exclusion-inclusion formula for $\mathcal J_i$, we conclude that (the number of $k$-sets that contain 1 and do not intersect either $A$ or $B$) $=$ (the number of sets that contain 1 and miss $A$) $+$ (the number of sets that contain 1 and miss $B$) $-$ (the number of sets that contain 1 and miss both $A$ and $B$) $=2{n-k-1\choose k-1}-{n-2k+|A\cap B|-1\choose k-1}$, and this function clearly strictly increases as the intersection size of $A$ and $B$ decreases, and so we ``loose'' more and more sets containing $1$ as $i$ become smaller. Therefore, the unique (up to isomorphism) maximal intersecting family $\ff$ with $\gamma(\ff)=2$ is $\mathcal J_2$. Theorem is proven.
\end{proof}

Let us denote $\ff_{l}$ the maximal intersecting family with $\ff_l(\bar 1) = \mathcal L(l, k)$. Note that $\mathcal J_2$ is isomorphic to $\ff_2$. It is not difficult to see that, in terms of Theorem~\ref{thmfull2}, for $l = 0,\ldots, k-3$ we have $\ff_l(\bar 1) = \mathcal L(T_l, k)$ and, therefore, $\ff_l(1) = \mathcal L(S_l, k)$  (see the analysis after Theorem~\ref{thmfull2}). We also have $\ff_{n-k} = \mathcal L(T_{k-2},k)$. Moreover, it is not difficult to see that for $k-1<l<n-k$ we have $\ff_l\subset \ff_{n-k}$ (indeed, we have $\ff_l(1) = \ff_{n-k}(1)$ for this range). Also, using Theorems~\ref{thmfull2} and~\ref{thmfulleq} (or by direct calculation), we can conclude that $|\ff_{k-1}|<|\ff_{k-2}| = |\ff_{n-k}|$.

The next theorem is another application of our method.
\begin{thm}\label{thmclass} Assume that $k\ge 5$ and $n>2k$. Consider an intersecting family $\ff\subset{[n]\choose k}$, such that $\Delta(\ff) = \delta(1)$. Assume that  $|\cap_{F\in \ff(\bar 1)}F|\le k-2$. Then
\begin{equation} |\ff|\le |\mathcal J_3|,
\end{equation}
with the equality only possible if $\ff$ is isomorphic to $\mathcal J_3$.
\end{thm}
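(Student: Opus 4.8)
The plan is to pass to the cross-intersecting world and then analyse $\ff(\bar 1)$ directly; unlike in Section~\ref{sec3}, one \emph{cannot} shift it to a lexicographic family, since left-compression may \emph{enlarge} $\bigcap_{B}B$ and destroy the hypothesis. Set $\aaa:=\ff(1)\subset\binom{[2,n]}{k-1}$ and $\bb:=\ff(\bar1)\subset\binom{[2,n]}{k}$; these are cross-intersecting, $\bb$ is intersecting, $|\ff|=|\aaa|+|\bb|$, and since $\Delta(\ff)=\delta(1)=|\aaa|$ we have $\gamma(\ff)=|\bb|$. The hypothesis reads $\big|\bigcap_{B\in\bb}B\big|\le k-2$, so in particular $|\bb|\ge 2$. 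We may assume $\aaa$ is the largest family cross-intersecting $\bb$, i.e.\ $\aaa=\{G\in\binom{[2,n]}{k-1}:G\cap B\ne\emptyset\ \forall B\in\bb\}$, so that with $\nu(\bb):=\big|\bigcup_{B\in\bb}\binom{[2,n]\setminus B}{k-1}\big|$ one has $|\aaa|=\binom{n-1}{k-1}-\nu(\bb)$. A direct computation gives $|\mathcal J_3|=\binom{n-1}{k-1}-2\binom{n-k-1}{k-1}+\binom{n-k-3}{k-1}+2$, so the whole statement is equivalent to
$$\nu(\bb)-|\bb|\ \ge\ 2\binom{n-k-1}{k-1}-\binom{n-k-3}{k-1}-2\qquad(\star)$$
together with the identification of when equality holds.

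If $|\bb|=2$, say $\bb=\{B_1,B_2\}$, inclusion--exclusion over the two holes gives $\nu(\bb)=2\binom{n-k-1}{k-1}-\binom{n-1-|B_1\cup B_2|}{k-1}$; since $|B_1\cup B_2|=2k-|B_1\cap B_2|\ge k+2$, $(\star)$ holds, with equality exactly when $|B_1\cap B_2|=k-2$, in which case $\ff$ is isomorphic to $\mathcal J_3$. The bulk of the work, and the main obstacle, is the case $|\bb|\ge 3$, where $(\star)$ must be made \emph{strict}. I would split it by $s:=\big|\bigcap_{B\in\bb}B\big|$. If $s=k-2$, then every member of $\bb$ contains a fixed $(k-2)$-set, so $\bb$ is encoded by a graph $H$ on the $m:=n-k+1$ points outside $\bigcap\bb$, with no vertex lying in all of its $e(H)=|\bb|\ge 3$ edges, and $\nu(\bb)=\binom{m}{k-1}-c(H)$ where $c(H)$ is the number of $(k-1)$-element vertex covers of $H$ inside the $m$-set. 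The combinatorial heart of this sub-case is the lemma that, over graphs without a universal vertex, the quantity $c(H)+e(H)$ is maximised \emph{uniquely} by $H=2K_2$: adding any edge to $2K_2$ deletes a polynomially large number of covers, far outweighing the unit gain in $|\bb|$, while the only other edge-minimal graph without a universal vertex — the triangle — has strictly fewer covers; carrying this out for \emph{all} admissible $m$ is exactly where $k\ge5$ is needed. Thus $\nu(\bb)-|\bb|>\nu(2K_2)-2$, i.e.\ $|\ff|<|\mathcal J_3|$.

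If $s\le k-3$, then $\bb$ is genuinely spread out. First, not all of its pairwise intersections can be $\ge k-1$: a short case analysis shows that forces $\bb$ to be a star-type or a triangle-type family, hence $\big|\bigcap\bb\big|\ge k-2$, contrary to assumption. So some pair $B_1,B_2\in\bb$ has $|B_1\cap B_2|\le k-2$. If in fact $|B_1\cap B_2|\le k-3$, then already $\nu(\{B_1,B_2\})\ge 2\binom{n-k-1}{k-1}-\binom{n-k-4}{k-1}$, a head start of $\binom{n-k-4}{k-2}$ over the right side of $(\star)$; this absorbs $|\bb|$ when $|\bb|\le\binom{n-k-4}{k-2}+1$, and for larger $|\bb|=\gamma(\ff)$ Theorem~\ref{thm1} already pushes $|\ff|$ down to about $\binom{n-2}{k-2}+2\binom{n-3}{k-2}\ll|\mathcal J_3|$. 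The one remaining configuration — $s\le k-3$ with all pairwise intersections equal to $k-1$ or $k-2$ — reduces, after pulling out the common part $\bigcap\bb$, to a family of $(k-s)$-sets with pairwise intersections in $\{(k-s)-1,(k-s)-2\}$ and empty common intersection; such a family has few $(k-1)$-covers and comparatively few members, so $\nu(\bb)$ stays above the right side of $(\star)$ with room to spare, precisely because $s\le k-3$ keeps the universe $[2,n]\setminus\bigcap\bb$ large. I expect the delicate points to be (a) making all the spread-out estimates uniform for every $n>2k$ rather than only for large $n$ — the borderline $n=2k+1$ needs separate care, since there the terms in $(\star)$ become tight — and (b) the uniqueness bookkeeping in the graph case, which is what pins down $\mathcal J_3$ and where the hypothesis $k\ge5$ is genuinely used.
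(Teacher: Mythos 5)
Your route is genuinely different from the paper's. The paper proceeds by shifting (applied carefully so as to preserve the feature "two sets in $\ff(\bar 1)$ intersect in $\le k-2$ elements", not to compress to a lexicographic family) and then by the bipartite-graph/independent-set argument of Lemmas~\ref{lemdiv} and~\ref{lemres}; you instead avoid shifting altogether and reduce everything to the quantity $\nu(\bb)$ and the single inequality $(\star)$. The $s=k-2$ reformulation as a vertex-cover counting problem on a graph $H$ is a nice reframing, and the $|\bb|=2$ base case is correct. However, there are several genuine gaps that keep this from being a proof.

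The most serious is the claim that, when $s\le k-3$, ``not all of its pairwise intersections can be $\ge k-1$,'' because that would force $\bb$ to be a star or a triangle and hence $|\bigcap\bb|\ge k-2$. This is false. Take $\bb$ to consist of four or more $k$-subsets of a single $(k+1)$-set $U$, say $B_i=U\setminus\{u_i\}$: then $|B_i\cap B_j|=k-1$ for all $i\ne j$, yet $|\bigcap\bb|=k+1-|\bb|\le k-3$. So the hypotheses $s\le k-3$ and ``all pairwise intersections $\ge k-1$'' are simultaneously achievable, and your dichotomy does not exhaust the cases. The paper explicitly isolates this configuration as its Case~2 ($\ff(\bar1)\subset\binom{[2,k+2]}{k}$) and handles it by a direct count; your write-up skips it entirely. (The inequality $(\star)$ does in fact hold for this configuration, so the theorem is safe, but your proposal gives no argument for it.)

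A second gap is in the sub-case where some pair has intersection $\le k-3$. The head-start from $\nu(\{B_1,B_2\})$ covers $|\bb|\le\binom{n-k-4}{k-2}+1$, and you propose to fall back on Theorem~\ref{thm1} for larger $\gamma=|\bb|$. But Theorem~\ref{thm1} with $u=3$ (the only value for which the resulting bound drops below $|\mathcal J_3|$ for all admissible $n$) requires $\gamma\ge\binom{n-4}{k-3}$, and for $n$ close to $2k$ the two ranges do not meet. For example, with $k=5$, $n=11$ one has $\binom{n-k-4}{k-2}=\binom{2}{3}=0$ while $\binom{n-4}{k-3}=\binom{7}{2}=21$, leaving $|\bb|\in[3,20]$ entirely uncovered. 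You flag ``borderline $n=2k+1$ needs separate care,'' but this is not a boundary issue: it is a whole regime for which the proposal contains no argument. Finally, the two remaining combinatorial cores --- the lemma that $c(H)+e(H)$ is uniquely maximised by $2K_2$ among graphs with no universal vertex, and the treatment of the ``all pairwise intersections in $\{k-1,k-2\}$ with some pair at $k-2$'' case --- are stated without proof, and the latter in particular (``few $(k-1)$-covers and comparatively few members, so $\nu(\bb)$ stays above the right side of $(\star)$ with room to spare'') is too vague to assess. Each of these would need to be carried out uniformly in $n>2k$ before the proposal could be considered complete.
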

Note that $\mathcal J_3$ is in a sense the family with the smallest $\mathcal J_3(\bar 1)$ among the ones that satisfy the condition of the theorem.
Before we prove it, let us put it into context. The following theorem is one of the main results in \cite{KostM}:

\begin{thm}[\cite{KostM}]\label{thmko} Let $k\ge 5$ and $n=n(k)$ be sufficiently large. If $\ff\subset {[n]\choose k}$ is intersecting, and $|\ff|> |\mathcal J_3|$, then  $\ff\subset \ff_l$ for $l\in \{0,\ldots k-1, n-k\}$.
\end{thm}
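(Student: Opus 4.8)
The plan is to derive this from Theorem~\ref{thmclass} together with an elementary case analysis on the common ``core'' of the non-star part, so that almost no computation is needed. First I would reduce to the case that $\ff$ is a \emph{maximal} intersecting family and that $1$ is a most popular element of $\ff$, so that $\Delta(\ff)=\delta(1)$ and $\gamma(\ff)=|\ff(\bar 1)|$. This is harmless: passing from $\ff$ to a maximal intersecting $\ff^*\supseteq\ff$ only increases the size (so still $|\ff^*|>|\mathcal J_3|$), and a containment $\ff^*\subseteq\sigma(\ff_l)$ for a permutation $\sigma$ of the ground set descends to $\ff$. If $\gamma(\ff)=0$ then $\ff$ is a star and $\ff\subseteq\ff_0$, so from now on assume $\gamma(\ff)\ge 1$ and set $t:=|\cap_{F\in\ff(\bar 1)}F|$, the size of the common core of the sets avoiding $1$. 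Since $\ff(\bar 1)$ is an intersecting family of $k$-sets on $[2,n]$, we have $0\le t\le k$.

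The only substantive step is the case $t\le k-2$. Here all hypotheses of Theorem~\ref{thmclass} are met ($k\ge 5$, $n>2k$, $\Delta(\ff)=\delta(1)$, and $|\cap_{F\in\ff(\bar 1)}F|\le k-2$), so that theorem gives $|\ff|\le|\mathcal J_3|$, contradicting $|\ff|>|\mathcal J_3|$. Hence $t\in\{k-1,k\}$, and it remains to show that a maximal intersecting family with core of size $\ge k-1$ sits inside some $\ff_l$ with $l\in\{1,\dots,k-1,n-k\}$.

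If $t=k$ then $\ff(\bar 1)=\{F\}$ for a single $k$-set $F\subseteq[2,n]$, and by maximality $\ff(1)$ is exactly the set of $(k-1)$-subsets of $[2,n]$ meeting $F$; relabeling $F$ to $[2,k+1]$ identifies $\ff$ with $\ff_1$, the Hilton--Milner family. If $t=k-1$, write the core as a fixed $(k-1)$-set $C$, so $\ff(\bar 1)=\{C\cup\{x\}:x\in X\}$ for some $X\subseteq[2,n]\setminus C$; note $\gamma(\ff)=|X|\le (n-1)-(k-1)=n-k$. If $|X|\le k-1$, then since for $l\le n-k$ the family $\mathcal L(l,k)$ consists precisely of the $k$-sets sharing the common $(k-1)$-element core $\{2,\dots,k\}$, there is a permutation of $[2,n]$ fixing $1$ and carrying $(C,X)$ onto the pair defining $\mathcal L(|X|,k)$; as $\ff(1)$ must be cross-intersecting with $\ff(\bar 1)$ and $\ff$ is maximal, this yields $\ff\subseteq\ff_{|X|}$ with $|X|\le k-1$. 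If instead $|X|\ge k$, then any $A\in\ff(1)$ disjoint from $C$ would have to contain all $\ge k$ elements of $X$, impossible for a $(k-1)$-set; hence $\ff(1)\subseteq\{A\in{[2,n]\choose k-1}:A\cap C\ne\emptyset\}$, and combined with $\ff(\bar 1)\subseteq\{C\cup\{x\}:x\in[2,n]\setminus C\}$ and maximality this gives, after relabeling $C$, that $\ff\subseteq\ff_{n-k}$.

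All the real work is hidden in Theorem~\ref{thmclass}, whose Kruskal--Katona-type proof is the genuine bottleneck; the rest is bookkeeping. The payoff is that, since Theorem~\ref{thmclass} already holds for all $n>2k$, the classification above holds for all $n>2k$ as well, which is what lets us dispense with the ``$n=n(k)$ sufficiently large'' assumption in the original statement. The only elementary point needing care is the case $t=k-1$: one must check that the diversity cannot exceed $n-k$, that $\mathcal L(l,k)$ for $l\le n-k$ genuinely consists of $k$-sets with a common $(k-1)$-element core (so the relabeling identifications with $\ff_{|X|}$ and $\ff_{n-k}$ are legitimate), and that in the large-diversity subcase maximality forces $\ff(1)$ to be the full ``star on $C$'', so that no set outside $\ff_{n-k}$ can appear.
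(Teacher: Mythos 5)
Your proposal is correct and follows exactly the route the paper intends: the paper simply asserts that Theorem~\ref{thmko} (in the strengthened form of Theorem~\ref{thmkonew}) ``follows immediately from Theorem~\ref{thmclass},'' and your argument is precisely that deduction with the details filled in — Theorem~\ref{thmclass} kills the case where the core of $\ff(\bar 1)$ has size at most $k-2$, and the remaining cases (core of size $k$ or $k-1$, with the subcases $|X|\le k-1$ and $|X|\ge k$) are handled by the same maximality bookkeeping the paper leaves implicit. Your case analysis is sound, including the observation that for $|X|\ge k$ every set of $\ff(1)$ must meet $C$, forcing $\ff\subseteq\ff_{n-k}$.
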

Many results in extremal set theory are much easier to get once one assumes that $n$ is sufficiently large in comparison to $k$.
In this paper we prove the theorem above {\it without the restriction on $n$}.
It follows immediately from Theorem~\ref{thmclass}.
\begin{thm}\label{thmkonew} Let $k\ge 5$ and $n>2k$. If $\ff\subset {[n]\choose k}$ is intersecting, and $|\ff|\ge |\mathcal J_3|$, then either $\ff$ is isomorphic to $\mathcal J_3$, or to a subset of $\ff_l$ for $l\in \{0,\ldots k-1, n-k\}$.
\end{thm}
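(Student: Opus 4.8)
The plan is to deduce this theorem from Theorem~\ref{thmclass} by a short case analysis based on the structure of a maximal intersecting family. Given an intersecting family $\ff\subset{[n]\choose k}$ with $|\ff|\ge|\mathcal J_3|$, we may assume $\ff$ is maximal (enlarging $\ff$ only helps) and that $1$ is a most popular element, so $\Delta(\ff)=\delta(1)$. Write $\ff(\bar 1)$ for the sets of $\ff$ avoiding $1$, and set $D:=\bigcap_{F\in\ff(\bar 1)}F$ (with $D:=[n]$ if $\ff(\bar 1)=\emptyset$). The argument splits on $|D|$: either $|D|\le k-2$, or $|D|\ge k-1$. In the first case Theorem~\ref{thmclass} applies directly and yields $|\ff|\le|\mathcal J_3|$, with equality only if $\ff\cong\mathcal J_3$; combined with the hypothesis $|\ff|\ge|\mathcal J_3|$ this forces $\ff\cong\mathcal J_3$.

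It remains to handle $|D|\ge k-1$, i.e. the sets of $\ff$ avoiding $1$ share at least $k-1$ common elements. If $\ff(\bar 1)=\emptyset$ then $\ff\subset\ff_0$ (the EKR family of all $k$-sets through $1$) and we are done. Otherwise $D$ is nonempty; since all $F\in\ff(\bar1)$ are $k$-sets containing the $(k-1)$- or $k$-set $D$, either $\ff(\bar1)=\{D\}$ with $|D|=k$, or $|D|=k-1$ and $\ff(\bar1)$ consists of $k$-sets of the form $D\cup\{x\}$, $x\notin D\cup\{1\}$. In either situation, after relabelling we may take $D\subset[2,n]$ and, because $\ff$ is maximal and intersecting, every set $F\in\ff(\bar 1)$ must meet every set $F'\in\ff(\bar 1)$ (automatic, as they all contain $D$) and every set of $\ff$ through $1$ must meet $D$. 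Thus $\ff(1):=\{F\setminus\{1\}:1\in F\in\ff\}$ is exactly $\{G\in{[2,n]\choose k-1}: G\cap D\ne\emptyset\}$ by maximality. One then checks, using the lexicographic description after Theorem~\ref{thmfull2}, that for $|D|=k-1$ the family $\ff$ equals (an isomorphic copy of) $\ff_l$ for some $l\in\{1,\ldots,k-1,n-k\}$ — the value of $l$ being governed by how $D\cup\{x\}$, ranging over the admissible $x$, sits in the lex order — while $|D|=k$ gives $\ff\cong\ff_1$ (the Hilton–Milner family, up to isomorphism, since a single extra set avoiding $1$ is the $l=1$ case). In all these subcases $\ff$ is a subset of $\ff_l$ for $l\in\{0,\ldots,k-1,n-k\}$, which is the desired conclusion.

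The main obstacle, and the only step requiring care, is the bookkeeping in the case $|D|=k-1$: one must verify that every maximal intersecting family whose non-$1$ part is "all $k$-sets containing a fixed $(k-1)$-set" is isomorphic to one of the listed $\ff_l$, and in particular that no such family with $k-1<l<n-k$ escapes the list — but this is exactly the observation recorded just before the statement, namely that $\ff_l\subset\ff_{n-k}$ for $k-1<l<n-k$ because $\ff_l(1)=\ff_{n-k}(1)$ in that range. So the heavy lifting is entirely contained in Theorem~\ref{thmclass}, and what remains is a routine structural dichotomy. I would therefore present the proof as: reduce to $\Delta(\ff)=\delta(1)$ and $\ff$ maximal; if $|D|\le k-2$ invoke Theorem~\ref{thmclass}; if $|D|\ge k-1$ identify $\ff$ explicitly as a subfamily of some $\ff_l$ with $l\in\{0,\ldots,k-1,n-k\}$.
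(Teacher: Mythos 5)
Your proposal is correct and follows exactly the route the paper intends (the paper simply asserts that Theorem~\ref{thmkonew} follows immediately from Theorem~\ref{thmclass}, leaving the dichotomy on $|D|=|\bigcap_{F\in\ff(\bar1)}F|$ implicit). One small inaccuracy: when $|D|=k-1$ and $|\ff(\bar1)|\le k-1$, a set $G$ through $1$ can intersect every member of $\ff(\bar1)$ while missing $D$ (by containing all of the chosen $x_i$'s), so $\ff(1)$ need not be exactly $\{G: G\cap D\neq\emptyset\}$; but this does not affect the conclusion, since after relabelling $\ff(\bar1)=\mathcal L(l',k)$ and the maximal cross-intersecting partner is $\ff_{l'}(1)$ by definition, giving $\ff\subset\ff_{l'}$ with $l'\in\{0,\dots,k-1\}$ or (via $\ff_{l'}\subset\ff_{n-k}$ for $l'\ge k$) $\ff\subset\ff_{n-k}$, as required.
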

We note that $|\mathcal J_3|<|\ff_{n-k}|$ for $n = ck$: e.g., taking $n>4k$ is sufficient.\\

The following theorem is one of the main results of this paper. It generalizes Theorem~\ref{thmclass} and gives a reasonable classification of all large intersecting families (actually, all families with not too large diversity). We prove it using yet another variation of our methods. Let us call a family $\G\subset {[2,n]\choose k}$ \underline{typical minimal}, if, first, for any $G_l\in \G$ we have $|\cap_{G\in \G\setminus\{G_l\}}G|>|\cap_{G\in \G}G|$, and, second, either $|\G|=2$ or the number of elements contained in at least $2$ sets from $\G$ is strictly bigger than $|\G|$. (Note that the first condition implies that there are at least $|\G|$ elements contained in exactly $|\G|-1$ sets from $\G$. Therefore, the second condition is, in particular, satisfied when $|\cap_{G\in\G}G|>0$.)

\begin{thm}\label{thmclass2} Assume that $n>2k\ge 8$. Consider an intersecting family $\ff\subset{[n]\choose k}$, such that $\Delta(\ff) = \delta(1)$. Assume that $|\cap_{F\in \ff(\bar 1)}F|=r$, where $r\in [0,k-1]$. Choose any $k-1\ge t\ge r$ and any typical minimal subfamily $\mathcal G\subset \ff(\bar 1)$, such that $|\cap_{F\in \G}F|= t$. Take the (unique) maximal intersecting family $\ff'$, such that $\ff'(\bar 1) = \G$. Then, if $t\ge 4$ or $\gamma(\ff)\le {n-5\choose k-4}$, we have
\begin{equation}\label{eqclass1} |\ff|\le |\ff'|,
\end{equation}
and equality is possible if and only if $t=r$ and $\ff$ is isomorphic to $\ff'$.
In particular, in conditions above, if there are two sets in $A,B\in \ff(\bar 1)$, such that $|A\cap B|= k-s+1$ for some $s=2,\ldots, k$, then
\begin{equation} |\ff|\le |\mathcal J_s|
\end{equation}
with the equality only possible if $\ff$ is isomorphic to $\mathcal J_s$.
\end{thm}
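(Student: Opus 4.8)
The plan is to reduce the problem to the cross-intersecting setting on $[2,n]$ exactly as was done for Theorems~\ref{thmfull1} and~\ref{thmfull2}, and then to run a shifting/compression argument on $\ff(\bar1)$ that monotonically drives it towards $\G$ while never decreasing $|\ff|=|\ff(1)|+|\ff(\bar1)|$. More precisely, writing $\aaa=\ff(1)$, $\bb=\ff(\bar1)$, these are cross-intersecting families in $2^{[2,n]}$ with $\bb\subset{[2,n]\choose k}$, $\aaa\subset{[2,n]\choose k-1}$, and $|\ff|=|\aaa|+|\bb|$. The key point is that $\G\subset\bb$ is typical minimal with $|\cap_{G\in\G}G|=t$; I would like to show that among all cross-intersecting pairs $(\aaa,\bb)$ with $\bb\supset\G$ (equivalently, with $|\cap_{F\in\bb}F|\le t$ and $\G$ realizable inside), the sum $|\aaa|+|\bb|$ is maximized by the unique maximal pair whose $\bb$-part equals $\G$. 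After that, translating back: the maximal intersecting family $\ff'$ with $\ff'(\bar1)=\G$ is exactly the one realizing this optimum, giving \eqref{eqclass1}, and the equality discussion follows from tracking when the monotone steps are strict.

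The core of the argument is a biregular bipartite graph exchange, in the spirit of Lemmas~\ref{lemdiv} and~\ref{lemres}. Suppose $\bb\supsetneq\G$ is not yet equal to $\G$. Using that $\G$ is typical minimal — so the sets of $\G$ have a genuinely small common intersection and there are more than $|\G|$ elements lying in at least two sets of $\G$ — I can identify a coordinate $i$ (playing the role of the "$4\le i\le j$" index in Lemma~\ref{lemdiv}) such that restricting $\bb$ below $i$ shrinks it towards $\G$ while the corresponding enlargement of $\aaa$ involves a graph part $\mathcal P_a$ no larger than $\mathcal P_b$; biregularity then forces the independent set $(\aaa\cap\mathcal P_a)\cup(\bb\cap\mathcal P_b)$ to have size at most $|\mathcal P_b|$, so $|\aaa'|+|\bb'|\ge|\aaa|+|\bb|$, with equality controlled exactly as in the neutral-pair analysis of Theorem~\ref{thmfulleq}. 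The hypothesis "$t\ge4$ or $\gamma(\ff)\le{n-5\choose k-4}$" is precisely what guarantees the inequality $|\mathcal P_a|\le|\mathcal P_b|$ at every step: when $t\ge4$ we are in the regime where the binomial comparison ${n-i\choose s_a}\le{n-i\choose s_b}$ holds automatically (as in Lemma~\ref{lemdiv}, condition 2 of a resistant pair), and when $t<4$ the extra bound on $\gamma(\ff)$ keeps $|\bb|$ small enough that the reduction still goes through — this edge case is where the "$\ge{n-5\choose k-4}$" threshold is calibrated. Iterating these exchanges terminates at $\bb=\G$, and since $\aaa$ is then forced (as $\ff'$ is the maximal intersecting family with that $\bar1$-part), we get $|\ff|\le|\ff'|$.

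For the equality case I would argue that equality in \eqref{eqclass1} forces every exchange step above to be an equality, which (as in the proof of Theorem~\ref{thmfulleq}) happens only when the relevant graph parts are equal and the families split cleanly along them — this pins down the characteristic set of $\aaa$ and $\bb$ at each stage and ultimately forces $t=r$ (otherwise there is slack: shrinking $\cap_{F\in\ff(\bar1)}F$ from $r$ up to $t$ strictly loses sets containing $1$, the same exclusion–inclusion phenomenon used in the proof of Theorem~\ref{thmhk}) and $\ff\cong\ff'$. The final "in particular" clause is then immediate: if $A,B\in\ff(\bar1)$ with $|A\cap B|=k-s+1$, then $\{A,B\}$ is a typical minimal subfamily with $|\cap|=k-s+1=:t$, and the maximal intersecting family $\ff'$ with $\ff'(\bar1)=\{A,B\}$ is isomorphic to $\mathcal J_s$, so \eqref{eqclass1} reads $|\ff|\le|\mathcal J_s|$ with equality iff $\ff\cong\mathcal J_s$ — provided $t=k-s+1\ge4$, i.e. $s\le k-3$, or $\gamma(\ff)$ is small, matching the stated hypotheses.

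The main obstacle I anticipate is the bookkeeping around the typical-minimal condition: verifying that one can always find the required reduction coordinate $i$ with $|\mathcal P_a|\le|\mathcal P_b|$ when $\bb\ne\G$ needs a careful case analysis of how the lex-compressed $\bb$ sits relative to $\G$ (the two cases $T'\not\supseteq T$ and $T'\supseteq T$ of Lemma~\ref{lemres} will reappear, likely with a third degenerate case for $|\G|=2$), and the low-$t$ regime requires separately checking that the $\gamma(\ff)\le{n-5\choose k-4}$ bound survives each reduction step (since $|\bb|$ only grows under the exchange, one must confirm the bound is still enough at the end — this is the delicate inequality). Compared to Section~\ref{sec3}, the new difficulty is that $\G$ need not be lex-compressed itself, so one cannot simply quote the resistant-pair machinery verbatim; instead one compresses $\ff(\bar1)$ first and argues that the image of $\G$ under compression still controls the optimum, using the typical-minimal hypothesis to ensure no collapse occurs.
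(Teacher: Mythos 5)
Your proposal hits the right meta-idea (reduce to cross-intersecting $(\aaa,\bb)$, drive $\bb$ towards $\G$ by bipartite exchanges, quote the inequalities of Section~\ref{sec3} at each step), but the specific route you sketch has a genuine gap that the paper avoids entirely. You propose to lex-compress $\ff(\bar1)$ first and then argue that ``the image of $\G$ under compression still controls the optimum, using the typical-minimal hypothesis to ensure no collapse occurs.'' This is precisely where the argument breaks: compression does not act on $\G$ alone but on all of $\ff(\bar1)$, so the image of $\G$ depends on the ambient family and is not a well-defined object you can compare to; moreover, the typical-minimal property is not preserved by shifting/compression, so there is no reason the hypotheses survive even one compression step. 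You correctly flag this as the main obstacle, but you do not resolve it, and I don't see how your outline could.

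The paper's actual proof sidesteps compression completely. It keeps $\ff(\bar1)$ as it is and instead applies the corollary derived from Theorem~\ref{thmfullcri} (inequalities \eqref{eqcreasy}, \eqref{eqcreasy2}) \emph{locally}, to restricted sub-pairs on shrinking ground sets. Concretely, for each $i\in\bigcap_{G\in\G}G$ it looks at $\mathcal P_a=\{P\setminus\{i\}: P\in\ff(1),\ i\in P\}\subset\binom{X}{k-2}$ and $\mathcal P_b=\{P\in\ff(\bar1): i\notin P\}\subset\binom{X}{k}$ with $X=[2,n]\setminus\{i\}$ — note the size gap is $2$, not $1$ as in Lemmas~\ref{lemdiv}/\ref{lemres} — and uses the diversity bound $\gamma(\ff)\le\binom{n-5}{k-4}$ exactly to certify that $|\bb|$ is below the threshold $\binom{|X|-3}{k-3}$ needed for \eqref{eqcreasy}. (Your explanation of what the hypothesis buys — ``$t\ge4$ means $\binom{n-i}{s_a}\le\binom{n-i}{s_b}$ automatically'' — is not the mechanism; the $t\ge4$ case is folded into the diversity bound via the observation that $\mathcal H_5$ itself satisfies the theorem's hypotheses, so one may assume the small-diversity regime throughout.) The replacement operation is then to delete $\ff(\bar1)\cap\mathcal P_b$ and insert $\mathcal P_a$ into $\ff(1)$, preserving cross-intersection because the surviving $\ff(\bar1)$-sets all contain $i$. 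The iteration then moves to elements contained in all but one set of $\G$, and finally to transversals $I_S$ of all but one $G_l$; the typical-minimal hypothesis is used to guarantee that this iteration has enough distinguished elements to terminate with $\ff(\bar1)=\G$, and the strictness of \eqref{eqcreasy}, \eqref{eqcreasy2} gives the equality characterization. This local, compression-free strategy is the key technical idea your proposal is missing.
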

We note that $r\ge 3$ implies both $t\ge 3$ and $\gamma(\ff)>{n-5\choose k-4}$, which, in turn, implies $|\ff|\le {n-1\choose k-1}-{n-5\choose k-1}+{n-5\choose k-4}$  due to Theorem~\ref{thm1}. Therefore, in particular, all families of size bigger than that are covered by the theorem.

\subsection{Proof of Theorem~\ref{thmclass}}
We are not going to use Theorem~\ref{thmfull2}, but rather give a self-contained proof in the spirit of the proofs in \cite{KZ}. The proof is rather simple and consists of two important steps. The first is, via shifting and rearranging the elements, to reduce the family in question to a family that has certain structure (that is, sets of particular form). The second step is to use the method in the spirit of Lemmas~\ref{lemdiv}, \ref{lemres}.

Let us first give the definitions related to {\it shifting}.\\

For a given pair of indices $1\le i<j\le n$ and a set $A \subset [n]$ define its \underline{$(i,j)$-shift} $S_{i,j}(A)$ as follows. If $i\in A$ or $j\notin A$, then $S_{i,j}(A) = A$. If $j\in A, i\notin A$, then $S_{i,j}(A) := (A-\{j\})\cup \{i\}$. That is, $S_{i,j}(A)$ is obtained from $A$  by replacing $j$ with $i$.

The  $(i,j)$-shift $S_{i,j}(\mathcal F)$ of a family $\mathcal F$ is as follows:

$$S_{i,j}(\mathcal F) := \{S_{i,j}(A): A\in \mathcal F\}\cup \{A: A,S_{i,j}(A)\in \mathcal F\}.$$

We call a family $\mathcal F$ \underline{shifted}, if $S_{i,j}(\mathcal F) = \mathcal F$ for all $1\le i<j\le n$.\\

We note that shifts do not destroy the cross-intersecting property, therefore, any pair of cross-intersecting families may be transformed into a pair of shifted cross-intersecting families.

Take the family $\ff$ satisfying the requirements of the statement. If $\gamma(\ff)=2$, then $\ff$ is isomorphic to one of the $\mathcal J_i$, $i=3,\ldots,k+1$, and we know that $\mathcal J_3$ is the largest out of them. Therefore, we may assume that $\gamma(\ff)\ge 3$.

Consider the families $\aaa:=\ff(1),\ \bb:=\ff(\bar 1)$. They are cross-intersecting. There are two cases to consider.\\

\textbf{Case 1. } Assume that we have two sets $A,B\in \ff(\bar 1)$, such that $|A\cap B|\le k-2$.

First of all, by doing shifts, we may assume that there are two sets $B_1,B_2\in \bb$, such that $|B_1\cap B_2|=k-2$. W.l.o.g., we may assume that $B_1 = [2,k-1]\cup \{k+1,k+2\}$, and that $B_2 = [2,k]\cup \{k+3\}$.

We have at least one more set $B_3$ in $\bb$. Doing $(i,j)$-shifts, where $i\in [2,k-1]$ and $j\in [k,n]$, we can assume that $B_3\supset [2,k-1]$. Apart from these elements, there are two more elements in $B_3$, say, $j_1$ and $j_2$, $j_1<j_2$. Either $j_1 = k$, or $j_2\ge k+3$, and we can do a $(k,j_2)$-shift (or a $(k,j_1)$-shift, if $j_1=k+3$), thus assuring that $[2,k]\in B_3$. Denote by $j$ the only remaining element of $B_3$. Clearly, $j\ne k+3$, since otherwise $B_3=B_2$, and therefore we can safely do a $(k+1,j)$-shift, thus transforming $B_3$ into $[2,k+1]$ and not changing $B_1,\ B_2$. Moreover, if $\gamma(\ff)\ge 4$, then there is yet another set $B_4$, which, in a similar way, we can transform into $[2,k]\cup \{k+2\}$. In what follows, we assume that $B_3 = [2,k+1]$ and $B_4 = [2,k]\cup \{k+2\}.$\\

Assume first that $\gamma(\ff)=|\bb|=3$ and $\bb = \{B_1,B_2,B_3\}$ as above. Then we can use the following simple argument. Remove the set $B_3$ from $\bb$, thus decreasing the sum of cardinalities of $\aaa$ and $\bb$ by $1$, and then add to $\aaa$ all the $(k-1)$-sets from the family
\begin{equation}\label{eqaaa}\aaa_1:= \big\{A\in {[2,n]\choose k-1}: A\cap [k+1]=\emptyset, \{k+2,k+3\}\in A\big\}.\end{equation}
Clearly, each set from $\aaa_1$ will intersect both $B_1$ and $B_2$, and thus the pair $\aaa, \bb$ will remain cross-intersecting. However, none of the sets from $\aaa_1$ was present in $\aaa$ before, since each set from $\aaa_1$ is disjoint with $B_3$. Therefore, we increase the sum of cardinalities of $\aaa$ and $\bb$ by $|\aaa_1| = {n-k-3\choose k-3}$, which is more than $1$ for $n> 2k$, $k\ge 4$. Therefore, the resulting cross-intersecting pair, which corresponds to the family $\mathcal J_3$, is bigger than the initial one.\\

If $\gamma(\ff)=|\bb|\ge 4$, then, due to the discussion two paragraphs above, we may assume that $B_i\in \bb$, $i\in[4]$, and that $\aaa, \bb$ are shifted.  Let $T$ be the lexicographically last element in $\bb$. since the family $\bb$ is itself intersecting, for each set $B\in \bb$ there must be an $i$ such that \begin{equation}\label{eq61}|[2,i]\cap B|>|[2,i]\setminus B|,\end{equation} because otherwise one of the sets in $[2,n]$ that may be obtained from $B$ by shifts will be disjoint from $B$, but at the same time it will lie in $\bb$.

Let us find the {\it biggest} such $i$ for $T$ and consider a bipartite graph $G$ with parts
\begin{align*}
\mathcal P_a:=&\big\{P\in {[2,n]\choose k-1}: P\cap [i] = [2,i]\setminus B\big\},\\
\mathcal P_b:=&\big\{P\in {[2,n]\choose k}: P\cap [i] = [2,i]\cap B\big\}
\end{align*}
and edges connecting disjoint sets. Then we see that, due to \eqref{eq61} and $n-1>2k-1$, $|\mathcal P_a|\ge |\mathcal P_b|$. At the same time, $(\aaa\cap \mathcal P_a)\cup(\bb\cap \mathcal P_b)$ is an independent set in $G$ and thus has size at most $|\mathcal P_a|$.
 Therefore, replacing $\aaa,\ \bb$ with $\aaa':=\aaa\cup \mathcal P_a$, $\bb':=\bb\setminus \mathcal P_b$, we do not decrease the sum of sizes. Moreover, it is easy to see that $\bb'$ is intersecting (since $\bb'\subset \bb$) and that $\aaa', \bb'$ are cross-intersecting. Indeed, any set in $\bb'$ must intersect $[2,i]$ in a set which lexicographically precedes $[2,i]\cap B$, and thus which intersects $[2,i]\setminus B$. Moreover, the family $\bb'$ is shifted.

 We repeat this with the lexicographically last element in $\bb'$, etc., until the lexicographically last set of the second family is $B_1$. We note that we will arrive at such a moment. Indeed, the only obstacle is that we somehow remove $B_1$ from the second family due to the fact that $B_1\in \mathcal P_b$, where $\mathcal P_b$ was defined by some other set $B'$ in the second family. Let us show that this is impossible. Recall that we chose $i$ as the largest index, for which the inequality \eqref{eq61} is satisfied. We must have $B'\cap [2,i] = B_1\cap [2,i]$. On the other hand, by the maximality of $i$, we have $1=|B'\cap [2,i]|-|B'\setminus [2,i]|=|B\cap [2,i]|-|B\setminus [2,i]|$. But $|B\cap [2,i]|-|B\setminus [2,i]|\ge 2$ for any $i\le k+2$.


Therefore, we may assume that $B_1$ is the lexicographically last element in $\bb$. Thus, each $B\in \bb,$ $B\ne B_1$, satisfies $[2,k]\subset B$. The number of such sets is $n-k$, and this, in particular, implies that $|\bb|\le n-k+1$. At the same time, $B_i\in \bb$, $i=[4]$, since they all precede $B_1$ lexicographically (and thus could not have been removed before $B_1$).

Removing all the sets from $\bb$ apart from $B_1,B_2$ will decrease $|\aaa|+|\bb|$ by at most $n-k-1$. At the same time, we may add to $\aaa$ the family $\aaa_1$ as well as the following family:

\begin{equation}\label{eqaaa2}\aaa_2:= \big\{A\in {[2,n]\choose k-1}: A\cap ([k]\cup\{k+2\})=\emptyset, \{k+1,k+3\}\in A\big\}.\end{equation}
We have $|\aaa_1|=|\aaa_2| = {n-k-3\choose k-3}\ge n-k-3$, and both $\aaa_1$ and $\aaa_2$ are disjoint with $\aaa$, since $B_3, B_4\in\bb$. Thus we are getting $2(n-k-3)$ new sets, and $2(n-k-3)-(n-k-1) = n-k-5> 0$ for $n>2k, k\ge 5$. Therefore, we conclude that in this case as well, the size of the family $\ff$ is smaller than the size of $\mathcal J_2$.\\

\textbf{Case 2. } Assume that any two sets in $\ff(\bar 1)$ intersect in at least $k-1$ elements. Then, knowing that $|\cap_{F\in \ff(\bar 1)}F|\le k-2$, we may assume that $\ff(\bar 1)\subset {[2,k+2]\choose k}$ (indeed, it is easy to check that all sets in $\ff(\bar 1)$ must be contained in a certain $k+1$-set). We may w.l.o.g. assume that the sets $C_1 = [2,k+1],\ C_2 = [2,k]\cup \{k+2\},\ C_3 = [2,k-1]\cup\{k+1,k+2\}$ are contained in $\ff(\bar 1)$. Let us look at the sets that $C_i$ forbid in $\ff(1)$, as compared to the sets that are forbidden by $B_1,B_2$. In both cases $\ff(1)$ contain all sets intersecting $[k-1]$, as well sets containing $\{k\}$ and one of $\{k+1,k+2\}$. Apart from the one listed above, the sets $C_i$ allow only for sets that intersect $[k+2]$ in $\{k+1,k+2\}$ (their number is ${n-k-2\choose k-3}$), while the sets $B_i$ allow for the sets that contain $\{k+3\}$, disjoint with $[k]$,  and intersect $[k+1,k+2]$ in at least one element (their number is ${n-k-2\choose k-3}+{n-k-3\choose k-3}$). Therefore, we can have ${n-k-3\choose k-3}$ sets more with $B_1,\ B_2$ than with $C_1,C_2,C_3$. On the other hand, in this case $|\ff(\bar 1)|-2\le k-1<{n-k-3\choose k-3}$ for any $n>2k\ge 10$. So we conclude that the families in this case are also smaller than $\mathcal J_2$.

\subsection{Proof of Theorem~\ref{thmclass2}}

Choose $\ff$, $t$, $\G$, and $\ff'$ satisfying the requirements of the theorem. We aim to prove that any family $\ff$ satisfying the requirements of the theorem has size strictly smaller than $\ff'$, unless it is isomorphic to $\ff'$. As a condition, in some of the cases we have $\gamma(\ff)\le {n-5\choose k-4}$. If it does not hold, but we have $t\ge 4$ instead, then we still may assume that $\gamma(\ff)\le {n-5\choose k-4}$. Indeed, the largest family with diversity $>{n-5\choose k-4}$ is at most as large as $\mathcal \mathcal H_5:=\{A\in {[n]\choose k}:[2,5]\subset A\}\cup\{A\in{[n]\choose k}: 1\in A, [2,5]\cap A\ne \emptyset\}$, and the latter family satisfies the requirements of the theorem (it contains a copy of any $\G$ as in the statement). Therefore, if we show that $\ff'$ is bigger than $\mathcal H_5$, it implies that $\ff'$ is bigger than any family with diversity $>{n-5\choose k-4}$.

Therefore, from now on we assume that $\gamma(\ff)\le {n-5\choose k-4}$. Assume that $\cap_{F\in \G}F = [2,t+1]$ (note that it may be empty).
For each $i\in [2,t+1]$ consider the following bipartite graph. The parts are
\begin{align*}
\mathcal P_a:=&\big\{P\setminus \{i\}: P\in \ff(1), i\in P\},\\
\mathcal P_b:=&\big\{P\in \ff(\bar 1): i\notin P\big\},
\end{align*}
and edges connect disjoint sets. Note that $\mathcal P_a\subset {X\choose k-2}$, $\mathcal P_b\subset {X\choose k}$, where $X = [2,n]\setminus \{i\}$, $|X|=n-2>k+k-2$. Due to the condition on $\gamma(\ff)$, we have $|\mathcal P_b\cap \ff(\bar 1)|\le {n-5\choose k-4}<{|X|-3\choose (k-2)-1}$. Thus, we can apply \eqref{eqcreasy} to
\begin{align*}
\aaa:=&\ff(1)\cap \mathcal P_a \ \ \ \ \text{and}\\
\bb:=&\ff(\bar 1)\cap \mathcal P_b,
\end{align*}
and conclude that $|\aaa|+|\bb|\le {|X|\choose k-2}$. Therefore, removing $\ff(\bar 1)\cap \mathcal P_b$ from $\ff(\bar 1)$ and adding sets from $\mathcal P_a$ to $\ff(1)$, we get a pair of families with larger sum of cardinalities. Moreover, the new pair is cross-intersecting: all sets in $\ff(\bar 1)\setminus \mathcal P_b$ contain $i$, as well as the sets from $\mathcal P_a$.
Therefore, we may assume that all sets in $\ff(\bar 1)$ contain $[2,t+1]$. \\

Put $\G = \{G_1,\ldots, G_z\}$. Due to minimality of $\G$, for each $G_l\in \G$, $l\in [z]$, there is $$i_l\in \Big(\bigcap_{G\in \G\setminus \{G_l\}}G\Big)\setminus \Big(\bigcap_{G\in \G}G\Big).$$
We assume that $i_l = t+l+1$, $l\in[z]$. In particular, $\{i_1,\ldots, i_z\} = [t+2,t+z+1]$.

Next, for each set $i\in [t+2, t+z+1]$ consider the same bipartite graph as before. We can apply \eqref{eqcreasy2} with $j:=k$. Indeed, we know that $|\bb|\ge {|X|\choose k-k}=1$ since $G_{i-t-1}\in \mathcal P_b$ (note that $G_l\notin \mathcal P_b$ for $l\ne i-t-1$, since all of them contain $i_l$ due to the definition of $i_l$). Therefore, $|\mathcal A|+|\mathcal B|\le {n-2\choose k-2}-{n-k-2\choose k-2}+1$, and we may replace $\aaa,\ \bb$ with $\bb':=\{B\}$ and $\aaa':=\{A\in {X\choose k-2}: A\cap B\ne \emptyset\}$. The resulting family is cross-intersecting. Repeating this procedure, we may conclude that any set in $\ff(\bar 1)$ not from $\G$ must contain the set $[2, t+z+1]$. If there are any other elements contained in all but 1 set from $\G$, we repeat the same procedure with them. Assume that they together with $[2,t+z+1]$ form a segment $[2,t']$. Note that for each $l \in [z]$ the set $G_l\cap [2,t']$ has the same size and must be non-empty. Otherwise, we have already proved the inequality \eqref{eqclass1}, since the current $\ff(\bar 1)$ is equal to $\G$.

Recall that $\G$ is typical minimal, that is, the number of elements contained in at least 2 sets from $\G$ is at least $z+1$. If $|[2,t']|\ge z+1$ (which is the case, e.g., when $t\ge 1$), then we proceed in the following way.\\

For each $S\subset[z]$ of size $z-1$ select one element $i_l\in G_l\cap [t'+1,n]$ for each $l\in S$. Note that $i_l$ may coincide. Put $I_S:=\{i_l:l\in S\}$. Consider a bipartite graph with parts
\begin{align*}
\mathcal P_a:=&\big\{P\setminus : P\in \ff(1), I_S\subset P, [2,t']\cap P=\emptyset\},\\
\mathcal P_b:=&\big\{P\setminus [2,t']: P\in \ff(\bar 1), I_S\cap P=\emptyset, [2,t']\subset P\big\},
\end{align*}
and edges connecting disjoint sets. Note that $\mathcal P_a\subset {Y\choose k-z'}$, $z'\le z$, and $\mathcal P_b\subset {Y\choose k-z''}$, $z''\ge z+1$, where $Y = [t'+1,n]\setminus I_S$, $|Y|=n-z''-z'$. In particular, $|Y|>k-z'+k-z''$. We have $|\G\cap \mathcal P_b| \in \{0,1\}$. Indeed, only the set $G_l,\ \{l\}=[z]\setminus S$ may be left. Denote
\begin{align*}
\aaa:=&\ff(1)\cap \mathcal P_a \ \ \ \ \text{and}\\
\bb:=&\ff(\bar 1)\cap \mathcal P_b.
\end{align*}
We have $k-z'>k-z''$, and, therefore, we may apply either \eqref{eqcreasy} or \eqref{eqcreasy2} with $a:=k-z',\ b:=k-z'',\ j:=k-z''$ (the upper bound on $|\bb|$ becomes trivial in that case) and conclude that either $|\aaa|+|\bb|\le {|Y|\choose k-z'}$ or $|\aaa|+|\bb|\le {|Y|\choose k-z'}-{|Y|-k+z''\choose k-z'}+1$. In both cases  we may replace $\bb$ with $\mathcal P_b\cap \G$ and $\aaa$ with all sets from $\mathcal P_a$ that intersect the set from
$\mathcal P_b\cap \G$ (if it is non-empty). As before, the size does not decrease and the cross-intersecting property is preserved.

Repeating this for all possible choices of $S$ and $I_S$, we arrive at a point when any set from $\ff(\bar 1)\setminus\G$ must intersect {\it any} set $I_S$. Tt is clear that it only holds for a set $F$ if $F\supset G_l\cap[t'+1,n]$. But then $F = G_l$, so $\ff(\bar 1) = \G$, and the proof of \eqref{eqclass1} is complete in this case. \\

Assume now that $|[2,t']|= z+1$. By the typical minimality of $\G$, we have an element $i\in$, which is contained in at least 2 sets, say, $G_1$ and $G_2$. We do something very similar to the previous case, but we have to be a bit more careful. We select a $z-1$-set $S\subset [z]$, which includes $1$ and $2$, a set $I_S:=\{i\}\cup\{i_l:l\in [3,z]\cap S\}$. It is clear that $|I_S|\le z-2$. Next, we consider the same bipartite graph as before. The sets in part $\mathcal P_a$ now have size at least $k-z+1$, while the sets in $\mathcal P_b$ have size at most $k-z$. Therefore, we can apply \eqref{eqcreasy}, \eqref{eqcreasy2} in the same way, and arrive at a family $\ff(\bar 1)$, such that any set in $\ff(\bar 1)\setminus \G$ intersects any set $I_S$ of the form described above. In practice, this means that any set in $\ff(\bar 1)\setminus \G$ contains $i$! Thus, we may now add $i$ to the set $[2,t']$ and proceed as in the first case: we now have at least $z+1$ common elements for all $F\in \ff(\bar 1)\setminus \G$. The inequality \eqref{eqclass1} is proven.

Finally, the uniqueness follows from the fact that the inequalities \eqref{eqcreasy}, \eqref{eqcreasy2} are strict unless the family $\bb$ has size $0$ and $1$, respectively. Therefore, if $\ff(\bar 1)\ne \G$, then at some point we would have had a strict inequality in the application of \eqref{eqcreasy}, \eqref{eqcreasy2}.

\section{Degree versions}\label{sec5}

Recently, Huang and Zhao \cite{HZ} gave an elegant proof of the following theorem using a linear-algebraic approach:

\begin{thm}[\cite{HZ}]\label{thmhz} Let $n>2k>0$. Then any intersecting family has
minimum degree at most ${n-2\choose k-2}$.
\end{thm}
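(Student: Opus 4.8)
The plan is to bound the minimum degree by an average of degrees taken over the neighbourhood of the most popular element. Let $1$ be an element of maximum degree, so that $\delta(1)=\Delta(\ff)$ and $|\ff(\bar 1)|=\gamma(\ff)$; if $\Delta(\ff)\le\binom{n-2}{k-2}$ there is nothing to prove, so we may assume $\Delta(\ff)>\binom{n-2}{k-2}$ (and $\ff\ne\emptyset$). For every $j\in[2,n]$ split the sets through $j$ according to whether they also contain $1$, getting $\delta(j)=|\{F\in\ff:1,j\in F\}|+|\{F\in\ff(\bar 1):j\in F\}|$. Summing over $j\ne 1$, and noting that every set through $1$ has $k-1$ further elements while every set of $\ff(\bar 1)$ has $k$ elements, we obtain $\sum_{j\ne 1}\delta(j)=(k-1)\Delta(\ff)+k\gamma(\ff)$. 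Hence some $j\ne 1$ satisfies $\delta(j)\le\frac{(k-1)\Delta(\ff)+k\gamma(\ff)}{n-1}$, and since $(n-1)\binom{n-2}{k-2}=(k-1)\binom{n-1}{k-1}$, the theorem reduces to the weighted inequality
\[\Delta(\ff)+\frac{k}{k-1}\,\gamma(\ff)\ \le\ \binom{n-1}{k-1}.\]

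This is a weighted Erd\H os--Ko--Rado bound of exactly the kind produced in Section~\ref{sec3}. In the range $\gamma(\ff)\le\binom{n-4}{k-3}$ I would simply invoke Corollary~\ref{corweight}, which gives $\Delta(\ff)+\frac{n-k-3}{k-3}\gamma(\ff)\le\binom{n-1}{k-1}$; a one-line computation shows $\frac{n-k-3}{k-3}\ge\frac{k}{k-1}$ is equivalent to $n(k-1)\ge(2k-3)(k+1)$, which holds for all $n\ge 2k+1$ when $k\ge 4$. In the range $\gamma(\ff)>\binom{n-4}{k-3}$, Theorem~\ref{thm1} with $u=3$ gives $|\ff|\le\binom{n-2}{k-2}+2\binom{n-3}{k-2}$; writing $\Delta(\ff)+\frac{k}{k-1}\gamma(\ff)=|\ff|+\frac{\gamma(\ff)}{k-1}$ and $\binom{n-1}{k-1}-\binom{n-2}{k-2}-2\binom{n-3}{k-2}=\binom{n-3}{k-1}-\binom{n-3}{k-2}$, it remains to check $\gamma(\ff)\le(k-1)\big(\binom{n-3}{k-1}-\binom{n-3}{k-2}\big)$, which follows from the extremal diversity bound $\gamma(\ff)\le\binom{n-3}{k-2}$ together with $\frac{k}{k-1}\le\frac{n-k-1}{k-1}$, i.e.\ once more with $n\ge 2k+1$. (For $n\ge 2k+2$ this second range can alternatively be disposed of by averaging $\sum_j\delta(j)=k|\ff|$ over all $n$ elements.)

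A cleaner way to package the whole diversity range is to rerun the proof of Theorem~\ref{thmfullw} with the weight $C=\frac{k}{k-1}$ instead of $\frac{n-k-3}{k-3}$: the biregular-bipartite-graph comparisons in Lemmas~\ref{lemdiv} and~\ref{lemres} only require $C$ to be at most the smallest of the ratios $\binom{n-i}{s_a}/\binom{n-i}{s_b}$ arising there, and these are all at least $\frac{n-k-1}{k-1}\ge\frac{k}{k-1}$ when $n>2k$. This yields the displayed weighted inequality directly and uniformly, with the equality analysis coming from Theorem~\ref{thmfulleq}.

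The main obstacle is that the weight $\frac{k}{k-1}$ lies precisely at the threshold of what the cross-intersecting estimates allow: the displayed inequality is attained with no slack at $n=2k+1$ by lexicographic versions of $\{F:|F\cap[3]|\ge 2\}$ and of the $\mathcal H_u$, so every comparison must be made sharply. The genuinely delicate points are: (i) the case $k=3$, where $\frac{n-k-3}{k-3}$ degenerates and Corollary~\ref{corweight} does not apply, requiring a separate direct argument (for instance from the Hilton--Milner theorem); (ii) controlling $\gamma(\ff)$ in the high-diversity regime, i.e.\ securing $\gamma(\ff)\le\binom{n-3}{k-2}$ without a hypothesis that $n$ be large compared with $k$; and (iii) the boundary $n=2k+1$, where all the inequalities hold but only by an $O(1)$ margin. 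Everything outside these corners is routine.
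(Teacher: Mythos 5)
Your reduction to the weighted inequality $\Delta(\ff)+\frac{k}{k-1}\gamma(\ff)\le\binom{n-1}{k-1}$ by averaging degrees over $[2,n]$ is precisely Proposition~\ref{stat1} of the paper specialized to $t=1$, and your subsequent split by diversity (Corollary~\ref{corweight} below $\binom{n-4}{k-3}$, Theorem~\ref{thm1} with $u=3$ above it) is the same case analysis the paper carries out in the proof of Theorem~\ref{thm01}. So for $n\ge 2k+2$ your argument agrees with the paper's, including the observation that the high-diversity case can be closed by averaging $\sum_j\delta(j)=k|\ff|$ over all $n$ vertices. It is worth noting, though, that the paper does \emph{not} claim a combinatorial proof of Theorem~\ref{thmhz} for all $n>2k$: it cites the linear-algebraic proof of Huang and Zhao, records that $n=2k+1$ and $n=2k+2$ were previously open combinatorially, and its own Theorem~\ref{thm01} (the $t=1$ case) is proved only for $n\ge 2k+2$.

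The genuine gap is in your treatment of $n=2k+1$. In the high-diversity range you write ``which follows from the extremal diversity bound $\gamma(\ff)\le\binom{n-3}{k-2}$,'' but that bound is not available here. The paper states explicitly that this diversity bound is only established (in \cite{Kup21}) for $n>Ck$ with some absolute constant $C$, conjectured for $n>3k-2$, and \emph{false} for $2k<n<3k$: in that window there exist intersecting families with diversity exceeding $\binom{n-3}{k-2}$. Since $n=2k+1$ lies in exactly that window for every $k\ge 2$, the step is invalid at the one value of $n$ your argument was supposed to add. The alternative ``cleaner way'' does not repair this: Theorem~\ref{thmfullw}/Corollary~\ref{corweight} only control $\gamma(\ff)\le\binom{n-4}{k-3}$, and beyond the last resistant pair the cross-intersecting machinery alone is too weak, because an extremal cross-intersecting pair $(\aaa,\bb)$ with large $|\bb|$ need not have $\bb$ intersecting, so it need not come from an intersecting family at all. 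You correctly flag (i)--(iii) as the delicate points, but (ii) is not merely delicate --- without a new idea (or the linear-algebraic proof of \cite{HZ}) the case $n=2k+1$ remains unproved, so your proposal, like the paper's combinatorial argument, only establishes the statement for $n\ge 2k+2$.
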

The bound in the theorem is tight because of the trivial intersecting family, and the condition $n>2k$ is necessary: in \cite{HZ} the authors provide an example of a family for $n=2k$ which has larger minimal degree. In fact, for most values of $k$ there are {\it regular} intersecting families in ${[2k]\choose k}$ of maximal possible size: ${2k-1\choose k}$ (see \cite{IK}).  In the follow-up paper, Frankl, Han, Huang, and Zhao \cite{FHHZ} proved

\begin{thm}[\cite{FHHZ}]\label{thmfhhz} Let $k\ge 4$ and $n\ge ck^2$, where $c=30$ for $k=4,5$, and $c=4$ for $k\ge 6$. Then any non-trivial intersecting family has minimum degree at most ${n-2\choose k-2}-{n-k-2\choose k-2}$.
\end{thm}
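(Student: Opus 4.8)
The plan is to split on the diversity $\gamma(\ff)$ into a small, a middle, and a large range, with the middle range doing the real work via a direct degree count, and the large range handled by Theorem~\ref{thm1} plus averaging.

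\emph{Set-up.} As the minimum degree is unchanged by relabelling the ground set, and enlarging $\ff$ to a maximal intersecting family can only raise degrees while preserving non-triviality, I may assume $\ff$ is maximal and $\delta(1)=\Delta(\ff)$. Put $\aaa=\ff(1),\ \bb=\ff(\bar 1)$; these are cross-intersecting on $[2,n]$ and $\gamma:=|\bb|=\gamma(\ff)\ge 1$. By maximality, for each $j\in[2,n]$ the degree $\delta(j)$ equals the number of $B\in\bb$ with $j\in B$, plus the number of $(k-2)$-sets $C\subset[2,n]\setminus\{j\}$ meeting every $B\in\bb$ with $j\notin B$ (a set $\{1,j\}\cup C$ lies in $\ff$ exactly when it meets all of $\bb$).

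\emph{Middle range $2\le\gamma\le{n-4\choose k-3}$.} By averaging over $[2,n]$ there is an $x$ lying in at most $k\gamma/(n-1)$ members of $\bb$; since $n\ge ck^2$ this leaves at least two members of $\bb$ avoiding $x$ (if $\gamma\le 2$ one instead takes $x\in[2,n]\setminus\bigcup_{B\in\bb}B$, which is non-empty as that union has fewer than $n-1$ elements, and then all of $\bb$ avoids $x$). Fix distinct $B_1,B_2\in\bb$ with $x\notin B_1\cup B_2$. Inclusion--exclusion among the ${n-2\choose k-2}$ sets $C\in{[2,n]\setminus\{x\}\choose k-2}$ shows at least $2{n-k-2\choose k-2}-{n-2-|B_1\cup B_2|\choose k-2}$ of them miss $B_1$ or $B_2$, and for such $C$ the set $\{1,x\}\cup C$ is not in $\ff$; since $|B_1\cup B_2|\ge k+1$ this number is at least $2{n-k-2\choose k-2}-{n-k-3\choose k-2}={n-k-2\choose k-2}+{n-k-3\choose k-3}$. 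Combining with $|\{B\in\bb:x\in B\}|\le k\gamma/(n-1)$ and the identity above,
\begin{equation*}
\delta(x)\ \le\ {n-2\choose k-2}-{n-k-2\choose k-2}-{n-k-3\choose k-3}+\frac{k\gamma}{n-1}\ \le\ {n-2\choose k-2}-{n-k-2\choose k-2},
\end{equation*}
the last inequality because $n\ge ck^2$ forces $\tfrac{k}{n-1}{n-4\choose k-3}\le{n-k-3\choose k-3}$. (For $\gamma=1$, $\bb=\{B\}$, any $x\notin B\cup\{1\}$ already has $\delta(x)={n-2\choose k-2}-{n-k-2\choose k-2}$ from the one-set version of the same count.)

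\emph{Large range $\gamma>{n-4\choose k-3}$.} Here Theorem~\ref{thm1} with $u=3$ (so ${n-u-1\choose n-k-1}={n-4\choose k-3}$) gives $|\ff|\le{n-2\choose k-2}+2{n-3\choose k-2}$, so the minimum degree is at most $\tfrac{k}{n}|\ff|\le\tfrac{k}{n}\big({n-2\choose k-2}+2{n-3\choose k-2}\big)$, which lies below ${n-2\choose k-2}-{n-k-2\choose k-2}$ once $n\ge ck^2$ by a routine comparison of binomial coefficients.

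\emph{Main obstacle.} Everything hinges on the middle range, and this is the only place the quadratic bound $n\ge ck^2$ is genuinely needed: the slack to absorb is $k\gamma/(n-1)$, the gain from using two avoiding sets instead of one is exactly ${n-k-3\choose k-3}$, so throughout $\gamma\le{n-4\choose k-3}$ one needs ${n-4\choose k-3}\big/{n-k-3\choose k-3}\lesssim n/k$. That ratio is of order $(1+k/n)^{k}$, which is polynomially bounded precisely when $n$ has order $k^2$; replacing the asymptotic estimates above by exact inequalities and checking $k=4,5$ and $k=6$ separately is what yields the constants $c=30$ and $c=4$.
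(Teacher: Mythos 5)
The paper does not actually prove Theorem~\ref{thmfhhz}; it is a cited result from \cite{FHHZ}, and the paper's own contribution in this direction is Theorem~\ref{thm02}, proved in Section~\ref{sec52} by a three-range split on $\gamma$ and requiring $k\ge 35$ when $t=1$. Your proposal takes a related but lighter route: a two-range split with a direct inclusion–exclusion degree count in the middle range. That middle-range argument is correct and clean (it parallels the averaging idea of Proposition~\ref{stat1}), and the choice of $x$ and the bound $2{n-k-2\choose k-2}-{n-k-3\choose k-2}={n-k-2\choose k-2}+{n-k-3\choose k-3}$ are right.

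However, the large range $\gamma>{n-4\choose k-3}$ has a genuine gap for $k=4,5$. You claim $\frac kn\big({n-2\choose k-2}+2{n-3\choose k-2}\big)\le{n-2\choose k-2}-{n-k-2\choose k-2}$ holds "once $n\ge ck^2$ by a routine comparison," but it is false. For $k=4$ the left side is $\frac{2(n-3)(3n-10)}{n}$ and the right side is $4n-18$, and their difference is $\frac{2(n^2-10n+30)}{n}=\frac{2((n-5)^2+5)}{n}>0$ for all $n$; so no constant $c$ rescues the crude bound $\delta_1\le\frac kn|\ff|$. The analogous computation for $k=5$ gives a positive gap $\frac{20(n-6)}{n}$ for every $n>6$. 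This matches the behaviour in the paper's own Case 3 of Theorem~\ref{thm02}, where the analogous inequality \eqref{eq08} is only verified for $k\ge 12$. The underlying issue is that when $\gamma$ is near ${n-3\choose k-2}$ the extremal-looking family is $\{F:|F\cap[3]|\ge 2\}$, whose minimum degree (e.g.\ $3n-11$ for $k=4$) is far below both the average $\frac kn|\ff|$ and the target $4n-18$, so averaging over the whole ground set is simply too lossy in this regime. To close the gap you would need either to push the middle-range argument into the high-diversity regime (use many sets $B_1,\dots,B_m$ avoiding $x$, not just two), or to exploit quantitative structure of high-diversity families along the lines of Theorem~\ref{thmfull2}/Proposition~\ref{propfulleq}. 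As written, the proof is incomplete for $k=4,5$.
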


 Several questions and problems arose in this context, that were asked in \cite{HZ}, \cite{FHHZ}, as well as in personal communication with Hao Huang and his presentation on the Recent Advances in Extremal Combinatorics Workshop at TSIMF, Sanya. Some of them are as follows:
\setlist{leftmargin=1cm}
\begin{enumerate}
\item Can one find a combinatorial proof of Theorem~\ref{thmhz}? This question was partially answered by Frankl and Tokushige \cite{FT6}, who proved it under the additional assumption $n\ge 3k$. Huang claims that their proof can be made to work for $n\ge 2k+3$, provided that one applies their approach more carefully. However, the cases $n=2k+2$ and $n=2k+1$ remained open.
\item Extend Theorem~\ref{thmfhhz} to the case $n\ge ck$ for large $k$. Ultimately, prove Theorem~\ref{thmfhhz} for all values $n\ge 2k+1$ for which it is valid.

\item Extend Theorems~\ref{thmhz} and \ref{thmfhhz} to degrees of $t$-tuples of vertices. The \underline{degree of a subset} $S\subset [n]$ is the number of sets from the family containing $S$. We denote by $\delta_t(\ff)$ the minimal degree of an $t$-element subset $S\subset [n]$.
\end{enumerate}

In this section we address these questions, partially answering all three of them. In the first theorem, we prove a $t$-degree version of Theorem~\ref{thmhz}. Its proof is combinatorial and works, in particular, for $s=1$ and $n\ge 2k+2$.

\begin{thm}\label{thm01} If $n\ge 2k+2>2$, then for any intersecting family $\ff$ of $k$-subsets of $[n]$ we have $\delta_1(\ff)\le {n-2\choose k-2}$. More generally, if $n\ge 2k+\frac{3t}{1-\frac tk}$ and $1\le t<k$, then $\delta_t(\ff)\le {n-t-1\choose k-t-1}$.\end{thm}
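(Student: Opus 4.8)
The plan is to reduce the $t$-degree statement to the known diversity bounds (Theorems~\ref{thm1} and~\ref{thmfull1}) by a standard ``link'' construction. Fix an intersecting family $\ff\subset{[n]\choose k}$ and a $t$-set $S$; we want to bound $\delta_t(S)$ from above, so we may assume $S$ minimizes the degree, i.e. $\delta_t(S)=\delta_t(\ff)$. Write $\ff_S:=\{F\setminus S: S\subset F\in\ff\}\subset{[n]\setminus S\choose k-t}$, which has $|\ff_S|=\delta_t(S)$ and is intersecting on the $(n-t)$-element ground set $[n]\setminus S$. If $\ff_S$ had a common element, then all $F\in\ff$ containing $S$ would contain a fixed $(t+1)$-set, giving $\delta_t(S)\le{n-t-1\choose k-t-1}$ immediately; so we may assume $\ff_S$ is \emph{non-trivial}. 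Since $n-t\ge 2(k-t)+1$ is needed, one checks this follows from $n\ge 2k+2$ when $t<k$ (indeed $n-t\ge 2k+2-t\ge 2(k-t)+2$). So Theorem~\ref{thm1} (or the Hilton--Milner bound) already yields $\delta_t(S)\le{n-t-1\choose k-t-1}-{n-k-1\choose k-t-1}+1$, which is \emph{smaller} than ${n-t-1\choose k-t-1}$. That looks like it proves the theorem outright, so the real content of the hypothesis $n\ge 2k+\frac{3t}{1-t/k}$ must be something else — presumably the case analysis is not on a single $S$ but must hold simultaneously, or the minimizing $S$ need not be a subset of any member of $\ff$, or we want a cleaner self-contained argument.

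The approach I would actually take, to get the stated range, is a direct double-counting / shifting argument in the spirit of Frankl--Tokushige~\cite{FT6}. First, apply $(i,j)$-shifts: shifting preserves the intersecting property and can only decrease $\delta_t(\ff)$ for a suitably chosen $t$-set (one must check that after shifting, the minimum $t$-degree over all $t$-sets does not increase; the safe statement is that the minimum degree of the specific $t$-set $[n-t+1,n]$, the ``last'' set, does not increase under shifts, which is the standard monotonicity). So assume $\ff$ is shifted and estimate $\delta_t([n-t+1,n])$, the number of $F\in\ff$ with $[n-t+1,n]\subset F$. Equivalently, look at $\ff':=\{F\cap[n-t]: F\in\ff, [n-t+1,n]\subset F\}$, a family of $(k-t)$-sets on $[n-t]$; the key point of shiftedness is that $\ff'$ is again intersecting and moreover any member of $\ff$ omitting $[n-t+1,n]$ entirely forces structure. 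Then count pairs $(F,G)$ with $F$ a counted set, $G\in\ff$ with $F\cap G=\emptyset$ on the complement — there are none since $\ff$ is intersecting — and compare with the count obtained by moving the elements $n-t+1,\dots,n$ out. This is where the arithmetic $n\ge 2k+\frac{3t}{1-t/k}$ enters: one needs $n-t$ large enough relative to $k-t$ that an averaging inequality of the form ${n-t-1\choose k-t-1}\ge (\text{stuff})$ holds, and $\frac{3t}{1-t/k}=\frac{3tk}{k-t}$ is exactly the slack that makes ``$n-t\ge 2(k-t)+\frac{3(k-t)}{\,\cdot\,}$''-type bounds go through uniformly.

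The main obstacle, and the step I'd expect to be delicate, is the \emph{reduction to a shifted family while controlling the $t$-degree}: shifting is compatible with ``minimum degree of a single fixed element/set'' only for the extremal set, and one has to argue that it suffices to bound $\delta_t$ of that one canonical $t$-set. A second subtlety is the boundary of the range: for $t$ close to $k$ the quantity $\frac{3t}{1-t/k}$ blows up, and the trivial/non-trivial dichotomy above (via Theorem~\ref{thm1}) must be invoked to cover those cases, so the final proof is really a two-regime argument — small $t$ handled by the shifting/counting estimate, and the general case backstopped by the Hilton--Milner-type bound on $\ff_S$. I would organize the write-up as: (1) the link reduction and the trivial-case bound; (2) non-triviality forces $n-t\ge 2(k-t)+2$ hence Theorem~\ref{thm1} applies and already beats ${n-t-1\choose k-t-1}$; (3) a remark that the cruder range $n\ge 2k+\frac{3t}{1-t/k}$ suffices for a self-contained elementary proof avoiding Theorem~\ref{thm1}, via shifting plus a one-line averaging bound.
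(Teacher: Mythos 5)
Your first approach rests on a false claim: the link $\ff_S=\{F\setminus S:\ S\subset F\in\ff\}$ is \emph{not} intersecting in general. Two members $F_1,F_2\in\ff$ both containing $S$ are guaranteed to meet, but they may meet only inside $S$, in which case $F_1\setminus S$ and $F_2\setminus S$ are disjoint. Already for $t=1$ and $S=\{i\}$, the family $\ff_S$ consists of arbitrary $(k-1)$-sets in $[n]\setminus\{i\}$ with no intersection constraint among them. So neither Hilton--Milner nor Theorem~\ref{thm1} applies to $\ff_S$, and the bound $\delta_t(S)\le\binom{n-t-1}{k-t-1}-\binom{n-k-1}{k-t-1}+1$ is unjustified. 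You correctly sensed that the argument ``looks like it proves the theorem outright'' and that something must be wrong; this is what is wrong. It also explains why equality $\delta_1(\ff)=\binom{n-2}{k-2}$ actually occurs (for the star), whereas your first argument would force strict inequality whenever $\ff_S$ is nontrivial.

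Your second, sketched route (shift, then bound the degree of $[n-t+1,n]$) is a genuinely different direction but is too underdeveloped to assess: you yourself flag the delicacy of shifting versus minimum $t$-degree, and you do not supply the counting step that would actually produce the threshold $n\ge 2k+\frac{3t}{1-t/k}$. The paper's proof avoids shifting entirely. It writes $|\ff|=\Delta+\gamma$ (maximum degree at element $1$, plus diversity) and averages the $t$-degree over the $\binom{n-1}{t}$ $t$-subsets of $[2,n]$ only, i.e.\ avoiding the high-degree element; this yields some $T\subset[2,n]$ with $\delta_t(T)\le \prod_{i=1}^{t}\frac{k-i}{n-i}\big(\Delta+\frac{k}{k-t}\gamma\big)$, so it suffices to prove $\Delta+\frac{k}{k-t}\gamma\le\binom{n-1}{k-1}$. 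That inequality is then checked by splitting on $\gamma$: for $\gamma\le\binom{n-4}{k-3}$ the weighted Frankl-type bound of Corollary~\ref{corweight} gives $\Delta+\frac{n-k-3}{k-3}\gamma\le\binom{n-1}{k-1}$, and the hypothesis on $n$ exactly ensures $\frac{k}{k-t}\le\frac{n-k-3}{k-3}$; for $\gamma\ge\binom{n-4}{k-3}$ the cruder bound $|\ff|\le\binom{n-2}{k-2}+2\binom{n-3}{k-2}$ from Theorem~\ref{thm1} together with a global average over all $t$-sets suffices. The key idea missing from both of your routes is the restriction of the averaging to $t$-sets disjoint from the most popular element, which is what converts the diversity bound into a $t$-degree bound with the correct weight $\frac{k}{k-t}$.
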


In the second theorem we give a $t$-degree version of Theorem~\ref{thmfhhz} with much weaker restrictions on $n$ for large $k$.
\begin{thm}\label{thm02} If $t=1$, $n\ge 2k+5$, and $k\ge 35$, or $1<t\le \frac k4-2$, $n\ge 2k+14t$, then for any non-trivial intersecting family $\ff$ of $k$-subsets of $[n]$ we have $\delta_t(\ff)\le {n-t-1\choose k-t-1}-{n-t-k-1\choose k-t-1}$. \end{thm}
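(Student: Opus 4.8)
The plan is to reduce the degree statement to an intersecting-family-size statement about the link of a $t$-set, and then feed this into the diversity machinery of Section~\ref{sec3} (specifically Theorem~\ref{thm1}, or rather its cross-intersecting refinements). Suppose $\ff\subset\binom{[n]}{k}$ is non-trivial intersecting and, for contradiction, every $t$-set has degree exceeding $\binom{n-t-1}{k-t-1}-\binom{n-t-k-1}{k-t-1}$. Pick a $t$-set $S$; the link $\ff_S:=\{F\setminus S: S\subset F\in\ff\}$ is an intersecting family in $\binom{[n]\setminus S}{k-t}$ (intersecting because all its members share nothing forced, but any two $F,F'\supset S$ meet, and if they met only inside $S$ that is fine — wait, we need them to meet outside $S$: they need not. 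So $\ff_S$ need not be intersecting). This is the first subtlety, and the right fix is standard: one does not look at a single link but rather exploits that $\ff$ itself, being non-trivial intersecting with large minimum $t$-degree, must be "spread out," and one runs a counting/averaging argument over all $t$-sets simultaneously, or picks $S$ to lie inside a single member and uses the Hilton--Milner-type structure.

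The cleaner route, and the one I would actually pursue, mirrors the $t=1$ proof of Theorem~\ref{thmfhhz}: first establish that a non-trivial intersecting $\ff$ with minimum $t$-degree close to $\binom{n-t-1}{k-t-1}$ must have small diversity, and in fact (after the reductions of Section~\ref{sec5}, which presumably already handle Theorem~\ref{thm01}) one can assume $\ff$ is shifted. For a shifted non-trivial intersecting family the set $[k+1,k+t+1]$ or a similar "late" $t$-set $S$ has controllably small degree: the sets through $S$ correspond, via shiftedness, to a cross-intersecting configuration on $[n]\setminus S$ between $\ff(1\text{-part})$ and the part avoiding small coordinates. Then I would apply the cross-intersecting bound \eqref{eqcreasy2} (the Corollary after Theorem~\ref{thmfullcri}) with $a:=k-1-?$, $b:=?$ chosen so the binomials match $\binom{n-t-1}{k-t-1}$ and $\binom{n-t-k-1}{k-t-1}$, getting exactly the claimed upper bound on $\delta_t(S)$ for this particular $S$, hence on $\delta_t(\ff)$. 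The numerical hypotheses $n\ge 2k+5,\ k\ge 35$ (resp.\ $n\ge 2k+14t$, $t\le k/4-2$) are precisely what is needed to guarantee: (i) the error terms lost in shifting/symmetrization are dominated, and (ii) the relevant binomial ratios $\binom{n-i}{s_a}/\binom{n-i}{s_b}$ controlling which part of the biregular bipartite graph is the larger independent set go the right way — this is the same inequality $\frac{n-2k+s_a}{s_a}\ge\frac{n-k-3}{k-3}$ appearing in the proof of Theorem~\ref{thmfullw}, now with parameters shifted by $t$.

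Concretely the steps are: (1) reduce to $\ff$ shifted and non-trivial, noting shifts preserve non-triviality unless $\ff$ becomes a star, in which case we are done; (2) identify a specific $t$-set $S$ (e.g.\ $S=[k-t+2,k+1]$ or a set meeting the "core" $[2,k+1]$ in a controlled way) whose link decomposes into a cross-intersecting pair $\aaa\subset\binom{X}{a},\bb\subset\binom{X}{b}$ on $X=[n]\setminus(S\cup\{1\})$ with $|X|=n-t-1$, $a=k-t-1$, $b=k-t$ (the "$1\in F$" vs "$1\notin F$" split applied inside the link); (3) bound $|\bb|$: because $\ff$ is non-trivial and shifted, $\bb$ is a nonempty intersecting-type family, so $|\bb|\ge1$, and then \eqref{eqcreasy2} with $j=k$ gives $|\aaa|+|\bb|\le\binom{n-t-1}{k-t-1}-\binom{n-t-k-1}{k-t-1}+1$; absorbing the $+1$ and any shifting loss into the slack provided by the hypotheses on $n,k,t$ finishes it; (4) read off $\delta_t(S)=|\aaa|+|\bb|$ up to the trivial sets through $S$, contradiction.

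The main obstacle I expect is step~(2): justifying that \emph{some} $t$-set genuinely realizes the extremal cross-intersecting split, rather than merely an upper bound that is off by a lower-order but not-quite-negligible amount. For $t=1$ one uses that a shifted non-trivial intersecting family contains $[2,k+1]$ and hence $\delta(j)$ for $j$ large is governed by sets meeting $[2,k+1]$; the $t>1$ analogue needs the link to still be "non-trivial enough" that $\bb\ne\emptyset$, which is where the condition $t\le k/4-2$ enters — it guarantees the link of a generic $t$-set is still wide enough ($k-t$ large relative to the ground set $n-t$) for the Hilton--Milner phenomenon to persist. The threshold $k\ge35$ for $t=1$ is almost certainly an artifact of making a clean binomial inequality hold for \emph{all} admissible $n\ge2k+5$ at once; I would expect it to come out of a single explicit estimate comparing $\binom{n-k-3}{k-3}$ (the gain from the symmetrization in the spirit of the Case~1/Case~2 analysis in the proof of Theorem~\ref{thmclass}) against the $O(n-k)$ loss incurred when pruning the second family down to its essential core.
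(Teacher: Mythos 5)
Your proposal takes a genuinely different route from the paper, and it has concrete gaps that would block it. The paper never picks a single $t$-set, never shifts, and never looks at links. Instead it runs an \emph{averaging} argument on the diversity decomposition: writing $|\ff|=\Delta+\gamma$ with the most popular element equal to $1$, it observes that $\sum_{T\in\binom{[2,n]}{t}}\delta(T)=\gamma\binom{k}{t}+\Delta\binom{k-1}{t}$, so some $t$-set $T\subset[2,n]$ has $\delta_t(T)\le\prod_{i=1}^{t}\frac{k-i}{n-i}\bigl(\Delta+\frac{k}{k-t}\gamma\bigr)$ (this is \eqref{eq5}). Everything then reduces to bounding $\Delta+c_t\gamma$, which is done by Theorem~\ref{thm1} (the Frankl/Kupavskii--Zakharov diversity bound) in a three-way case analysis on $\gamma$: small $\gamma$ (where Kruskal--Katona shows many Hilton--Milner sets through $1$ are missing and one averages the resulting deficit over $t$-sets in $[k+2,n]$), medium $\gamma$ (the weighted inequality \eqref{eq12}), and large $\gamma$ (the crude bound \eqref{eq06}). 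The numerical hypotheses on $n,k,t$ fall out of closing these inequalities, not from a symmetrization loss as you speculate.

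Two of the gaps in your sketch are not repairable as stated. First, the shifting reduction in your step~(1) does not go through: an $(i,j)$-shift with $i<j$ moves sets off of $j$, so it can strictly \emph{decrease} the $t$-degree of any $t$-set containing $j$, and hence can decrease $\delta_t(\ff)$. A putative counterexample with $\delta_t>B$ therefore need not remain a counterexample after shifting, and ``reduce to $\ff$ shifted'' is unjustified; this is precisely why the degree-version problems resisted the standard compression machinery and why Huang--Zhao went linear-algebraic. Second, you correctly flag that the link $\ff_S$ is not intersecting, but you never resolve it -- you pivot to ``a counting/averaging argument over all $t$-sets simultaneously'' without producing one, and then abandon that for the shifted-$t$-set plan. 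Your step~(2) -- identifying a particular $t$-set $S$ whose degree realizes the bound and splitting its link into a cross-intersecting pair on which \eqref{eqcreasy2} applies -- is exactly the part you admit you cannot carry out, and it is in fact the whole difficulty. If you want to salvage the sketch, the averaging identity above is the missing idea that replaces both the shifting step and the ``choose a specific $S$'' step.
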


After writing a preliminary version of the paper, we read the paper \cite{FT6}, where Theorem~\ref{thm01} is proven for $s=1$ and $n\ge 3k$. It turned out that the approach the authors took is very similar to the approach we use to prove Theorem~\ref{thm01}. However, it seems that their proof, unlike ours, does not work for $n=2k+2$, which is probably due to the fact that they use the original Frankl's degree theorem (see Section~\ref{sec2}).
\subsection{Calculations}\label{sec50}

In this section we do some of the calculations used in the proofs of Theorems~\ref{thm01} and~\ref{thm02}. Note that, substituting $u=3$ in \eqref{eq01}, we get that $$|\ff|\le {n-1\choose k-1}+{n-4\choose k-3}-{n-4\choose k-1} = {n-2\choose k-2}+{n-3\choose k-2}+{n-4\choose k-2}+{n-4\choose k-3}=$$ $${n-2\choose k-2}+2{n-3\choose k-2}=\Big[\frac {k-1}{n-1}+\frac{2(k-1)(n-k)}{(n-2)(n-1)}\Big]{n-1\choose k-1}=$$\begin{equation}\label{eq25}\frac{(k-1)(3n-2k-2)}{(n-1)(n-2)}{n-1\choose k-1}=\frac{k(k-1)(3n-2k-2)}{n(n-1)(n-2)}{n\choose k}.\end{equation}

We also have
$$\frac{{n-u-1\choose n-k-1}}{{n-u-1\choose k-1}}=\frac{\prod_{i=1}^{n-k-1}\frac{n-u-i}i} {\prod_{i=1}^{k-1}\frac{n-u-i}i} = \prod_{i=k}^{n-k-1}\frac{n-u-i}i= \prod_{i=k}^{n-k-1}\frac{n-u-i}{n-1-i}.$$
Clearly, for $3\le u\le k$ the last expression is maximized for $u\ge 3$, and we get the following bound, provided $n\ge 2k+2$:
\begin{equation}\label{eq04}\frac{{n-u-1\choose n-k-1}}{{n-u-1\choose k-1}}\ge \prod_{i=k}^{n-k-1}\frac{n-3-i}{n-1-i}= \frac{(k-1)(k-2)}{(n-k-1)(n-k-2)}\ \ \ \ \ \ \text{for }3\le u\le k.\end{equation}
If $n=2k+1$, then we get that the ratio is at least $\frac {k-3}{k-1}$.

We will also use the following formula:
\begin{equation}\label{eq07}
\frac{{n-t-k-1\choose k-t-1}}{{n-t-1\choose k-t-1}}=\prod_{i=1}^{k}\frac{n-k+1-i}{n-t-i}.
\end{equation}

\subsection{Proof of Theorem~\ref{thm01}}\label{sec51}

Take an intersecting family $\ff$ with maximum degree $\Delta$ and diversity $\gamma$. Then, by definition, $|\ff|= \Delta+\gamma$. W.l.o.g., we suppose that the element $1$ has the largest degree.
\begin{prop}\label{stat1} Fix some $n,t,k$. If for an intersecting family of $k$-sets $\ff\subset 2^{[n]}$ with maximum degree $\Delta$ and diversity $\gamma$  we have \begin{equation}\label{eq02}\Delta+\frac k{k-t}\gamma\le {n-1\choose k-1},\end{equation} then $\delta_t(\ff)\le {n-t-1\choose k-t-1}$.\end{prop}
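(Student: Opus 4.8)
The plan is to prove the proposition by a short double-counting argument. We may assume (as in the surrounding proof) that the element $1$ attains the maximum degree, so that, writing $\ff(1)=\{F\setminus\{1\}:1\in F\in\ff\}$ and $\ff(\bar1)=\{F\in\ff:1\notin F\}$ as in Section~\ref{sec2}, we have $|\ff(1)|=\Delta$ and $|\ff(\bar1)|=\gamma$. The key choice is to average the $\ff$-degree only over the $t$-subsets $S$ of $[2,n]$, i.e.\ those avoiding the popular element. For such an $S$, a set $F\in\ff$ contains $S$ either because $1\in F$, in which case $S\subseteq F\setminus\{1\}\in\ff(1)$, or because $F\in\ff(\bar1)$ and $S\subseteq F$; hence, writing $d_{\ff}(S)$ for the number of members of $\ff$ containing $S$, we get $d_{\ff}(S)=d_{\ff(1)}(S)+d_{\ff(\bar1)}(S)$.

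Summing this over all $t$-subsets $S\subseteq[2,n]$ and exchanging the order of summation, each $(k-1)$-set of $\ff(1)$ is counted $\binom{k-1}{t}$ times and each $k$-set of $\ff(\bar1)$ is counted $\binom{k}{t}$ times, so
$$\sum_{S}d_{\ff}(S)=\binom{k-1}{t}\Delta+\binom{k}{t}\gamma=\binom{k-1}{t}\Bigl(\Delta+\tfrac{k}{k-t}\gamma\Bigr),$$
using the identity $\binom{k}{t}=\tfrac{k}{k-t}\binom{k-1}{t}$. By the hypothesis \eqref{eq02} this is at most $\binom{k-1}{t}\binom{n-1}{k-1}$. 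Dividing by the number $\binom{n-1}{t}$ of $t$-subsets of $[2,n]$, some such $S$ satisfies $d_{\ff}(S)\le\binom{k-1}{t}\binom{n-1}{k-1}\big/\binom{n-1}{t}$, and the identity $\binom{k-1}{t}\binom{n-1}{k-1}=\binom{n-1}{t}\binom{n-t-1}{k-t-1}$ turns the right-hand side into $\binom{n-t-1}{k-t-1}$. Since $\delta_t(\ff)$ is bounded above by the degree of this particular $t$-set, the proposition follows.

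I do not expect any genuine obstacle here: the entire content is the decision to average over the $t$-sets disjoint from the maximum-degree vertex (averaging over all $t$-subsets of $[n]$ would be too lossy) together with the two elementary binomial identities — the crucial one being that the weight $\tfrac{k}{k-t}$ in the hypothesis is exactly $\binom{k}{t}/\binom{k-1}{t}$, which is what makes the $\Delta$- and $\gamma$-contributions recombine into the left-hand side of \eqref{eq02}. The real work of this part of the paper lies elsewhere, namely in checking that an intersecting family with $n$ in the asserted ranges actually satisfies \eqref{eq02}; that is where Theorem~\ref{thm1} and the computations of Section~\ref{sec50} come in.
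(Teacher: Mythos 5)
Your proof is correct and is essentially the same as the paper's: both average the $t$-degree over the $t$-subsets of $[2,n]$, observe that the ratio of the coefficients of $\gamma$ and $\Delta$ is $\binom{k}{t}/\binom{k-1}{t}=\tfrac{k}{k-t}$, and then divide by $\binom{n-1}{t}$. The paper phrases the final simplification via explicit products of fractions (equations (19)--(20)) while you use the binomial identity $\binom{k-1}{t}\binom{n-1}{k-1}=\binom{n-1}{t}\binom{n-t-1}{k-t-1}$, but these are the same computation.
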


\begin{proof}
The sum of $t$-degrees of all $t$-subsets of $[2,n]$ is $\gamma {k\choose t}+\Delta {k-1\choose t}.$ Therefore, there is a $t$-tuple $T$ of elements in $[2,n]$, such that
\begin{equation}\label{eq4} \delta_t(T)\le \frac{\gamma {k\choose t}+ \Delta {k-1\choose t}}{{n-1\choose t}}=\gamma\prod_{i=1}^{t}\frac {k-i+1}{n-i}+\Delta\prod_{i=1}^{t}\frac{k-i}{n-i}.\end{equation}
The ratio of two fractions is $$\frac{\prod_{i=1}^{t}\frac {k-i+1}{n-i}}{\prod_{i=1}^{t}\frac{k-i}{n-i}} =\prod_{i=1}^{t}\frac{k-i+1}{k-i}=\frac k{k-t}.$$
Therefore, if \eqref{eq02} holds, then
\begin{equation}\label{eq5}\delta_t(\ff)\le \prod_{i=1}^{t}\frac{k-i}{n-i}(\Delta+\frac{k}{k-t}\gamma)\le\prod_{i=1}^{t}\frac{k-i}{n-i}{n-1\choose k-1}={n-t-1\choose k-t-1}.\end{equation}

\end{proof}

To prove Theorem~\ref{thm01}, we are only left to verify \eqref{eq02} for all intersecting families. It is vacuously true for trivial intersecting families, so we may assume that $\gamma\ge 1$. Fix $\ff$ as in the theorem. We have two cases to distinguish.\\

\textbf{Case 1. $\gamma\le {n-4\choose k-3}$. } We only need to show that \eqref{eq01} holds. We may apply Corollary~\ref{corweight} (otherwise, it is not difficult to obtain via direct calculations). We only have to check that \begin{equation}\label{eq151}\frac{k}{k-t}\le \frac{n-k-3}{k-3}.\end{equation} Putting $n = 2k+s$, where $s\ge 1$, we see that, if $t=1$, then \eqref{eq151} holds for any $s\ge 1$. If $t>1$, then we must have
$$k^2-3k-(k-3)t+s(k-t)\ge k^2-3k,$$
which is satisfied when $$s\ge \frac{k-3}{k-t}t \ \ \ \Leftarrow \ \ \  s\ge \frac{t}{1-\frac tk}.$$



\textbf{Case 2. $\gamma\ge {n-4\choose k-3}$. } By the calculations in Section~\ref{sec50}, We know that $|\ff|\le {n-2\choose k-2}+2{n-3\choose k-2}$, and we use the following easy bound on $\delta_t(\ff):$ \begin{equation}\label{eq06}\delta_t(\ff)\le \frac{{k\choose t}}{{n\choose t}} |\ff|.\end{equation}
Thus, it is sufficient for us to check that the following inequality holds:

\begin{equation}\label{eq05}{n-t-1\choose k-t-1}\ge \frac{{k\choose t}}{{n\choose t}}\Big[{n-2\choose k-2}+2{n-3\choose k-2}\Big].\end{equation}
We have
\begin{equation}\label{eq26} \frac{{n-t-1\choose k-t-1}{n\choose t}}{{k\choose t}} = \frac{(n-t-1)!n!(k-t)!}{(k-t-1)!(n-k)!(n-t)!k!}=\frac {k-t}{n-t}{n\choose k}\end{equation}
and, by the calculation in Section~\ref{sec50}, $${n-2\choose k-2}+2{n-3\choose k-2}=\frac{k(k-1)(3n-2k-2)}{n(n-1)(n-2)}{n\choose k}.$$
Therefore, \eqref{eq05} is equivalent to
$$\frac{k-t}{n-t}\ge \frac{k(k-1)(3n-2k-2)}{n(n-1)(n-2)}.$$

If $t=1$, then it simplifies to $\frac{k(3n-2k-2)}{n(n-2)}\le 1$, which holds for any $n\ge 2k+2$.


If $t>1$, then it simplifies to a quadratic inequality in $n$, which holds for $$n\ge\frac{2k^2+(k-3)t+\sqrt{(2k^2+(k-3)t)^2 -8(k-t)t(k^2-1)}}{2(k-t)}.$$
The right hand side is at most
$$\frac{2k^2+(k-3)t+\sqrt{(2k^2+(k-3)t)^2}}{2(k-t)}= \frac{2k^2+(k-3)t}{(k-t)} =2k+\frac{3(k-1)t}{k-t}<2k+\frac {3t}{1-\frac tk}.$$

\subsection{Proof of Theorem~\ref{thm02}}\label{sec52}
The strategy of the proof is very similar to that of Theorem~\ref{thm01}. Fix a non-trivial intersecting family $\ff$ with $\gamma\ge 2$ (otherwise, it is a subfamily of some Hilton-Milner family). W.l.o.g., suppose that the element of the family $\ff$ with maximal degree is 1, and that $\ff$ contains the set $[2,k+1]$. Then any other set containing $1$ must intersect $U$. We compare $\ff$ with the Hilton-Milner family $\mathcal {HM}:=\{F:1\in F, F\cap [2,k+1]\ne \emptyset\} \cup [2,k+1].$ We consider cases depending on $\gamma$.
The case analysis, however, will be more complicated, as compared to the previous case. Notably, we get a new non-trivial Case 1.\\

\textbf{Case 1. $1<\gamma<{n-k+t+1\choose t+2}$. }
 We have $\gamma\ge 2$, and we may apply Theorem~\ref{thmHil} to the families $\ff(1):=\{F\setminus\{1\}:1\in F\in\ff\}$ and $\ff(\bar 1):=\{F:1\notin F\in\ff\}$. We get that the cardinality of $\ff(1)$ is at most the cardinality of all $(k-1)$-subsets of $[2,n]$ that intersect the sets $[2,k+1]$ and $[2,k]\cup \{k+2\}$. In other words, the degree of $1$ is at most ${n-1\choose k-1}-{n-k-1\choose k-1}-{n-k-2\choose k-2}$. That is, some ${n-k-2\choose k-2}$ sets from $\mathcal {HM}$ containing $1$ are missing from $\ff$.

Denote $\mathcal G:=\{G\in{[n]\choose k}\setminus\ff:1\in G, G\cap [2,k+1]\ne \emptyset\}$. By the paragraph above, we have $|\mathcal G| \ge {n-k-2\choose k-2}$. Consider a subfamily $\mathcal G'\subset \mathcal G$, such that each $G'\in\mathcal G'$ intersects $[k+2,n]$ in at least $t$ elements.

Then, denoting by $\mathcal H$ the Hilton-Milner family, it is easy to see that the following rough estimate holds. \begin{equation}\label{eq18}\delta_t(\mathcal H)-\delta_t(\ff)\ge \frac{|\mathcal G'|}{{n-k-1\choose t}}-\gamma.
\end{equation}
Indeed, the first summand is the average loss in the $t$-degree of $t$-sets in $[k+2,n]$, and each set contributing to $\gamma$ can contribute at most 1 to minimum $t$-degree. In the rest of this case our goal is to show that the RHS in \eqref{eq18} is always positive.

Let us first estimate the size of $\mathcal G'$. To do so, we have to exclude all the sets that intersect $[1,k+1]$ in more than $k-t$ elements. Since $1$ is always in the set, the number of sets we have to exclude is $$\sum_{i=k-t}^{k-1}{k\choose i}{n-k-1\choose k-i-1}.$$
We have ${n-k-1\choose t-j}>2{n-k-1\choose t-j-1}$ for any $j\ge0$, since $n\ge 2k\ge 8t$.  Therefore, the sum above is at most $${k\choose k-t}{n-k-1\choose t}={k\choose t}{n-k-1\choose t},$$
and we have \begin{equation}\label{eq184}|\mathcal G'|\ge {n-k-2\choose k-2}-{k\choose t}{n-k-1\choose t}.\end{equation}
To show that the RHS in \eqref{eq18} is always positive and thus to conclude the proof in Case 1, it is sufficient to show the first in the following chain of inequalities:
\begin{small}\begin{multline}\label{eq19} {n-k-2\choose k-2}\ge {n-k-1\choose t}{n-k+t+2\choose t+2}>\\ >{n-k-1\choose t}{n-k+t+1\choose t+2}+{k\choose t}{n-k-1\choose t}.\end{multline}\end{small}
Note that the second inequality is just a corollary of Newton's binom and the fact that ${k\choose t}<{n-k+t+1\choose t+1}$. We also use the fact that $\gamma\le {n-k+t+1\choose t+2}$. \\

If $t=1$, $n\ge 2k+5$, then in the worst case for \eqref{eq19} is $n=2k+5$, and the first inequality in \eqref{eq19} transforms into $${k+3\choose 5}\ge (k+4){k+8\choose 3},$$
which holds for $k\ge 35$.\\

If $1<t\le \frac k{4}-2$, $n\ge 2k+14t$, then we have $${n-k-1\choose t}{n-k+t+2\choose t+2}\le \frac{(n-k+t+2)!}{t!(t+2)!(n-k-t)!}={n-k+t+2\choose 2t+2}{2t+2\choose t}\le$$$$ {n-k+t+2\choose 2t+2}2^{2t},$$
 where the last inequality holds for any $t\ge 2$.
 On the other hand, using the above conditions on $n,k,t$, we have $${n-k+t+2\choose 2t+2}< \Big(\frac{n-k-2}{n-k-2t-4}\Big)^{t+4}{n-k-2\choose 2t+2}\le $$
 $$\Big(1+\frac {2t+2}{k+10t-4}\Big)^{t+4}{n-k-2\choose 2t+2}\le \Big(\frac{19} {16}\Big)^{t+4}{n-k-2\choose 2t+2}< $$
 $$ \Big(\frac 54\Big)^{t+4} \Big(\frac{4t+4}{n-k-4t-6}\Big)^{2t+2}{n-k-2\choose 4t+4}\le \Big(\frac 54\Big)^{t+4} \Big(\frac{4t+4}{14t+2}\Big)^{2t+2}{n-k-2\choose 4t+4}\le$$ $$\Big(\frac 54\Big)^{t+4} \Big(\frac{2}{5}\Big)^{2t+2}{n-k-2\choose 4t+4}\le \Big(\frac{1}{2}\Big)^{2t+2}{n-k-2\choose k-2}.$$
 Comparing this chain of inequalities with the one above, we conclude that for our choice of parameters

$$\frac{{n-k-2\choose k-2}}{{n-k-1\choose t}{n-k+t+2\choose t+2}}\ge \frac {2^{2t+2}}{2^{2t}}>1.$$






\vskip+0.2cm


\textbf{Case 2. ${n-k+t+1\choose t+2}\le \gamma\le {n-4\choose k-3}$. }
Using the first inequality in \eqref{eq5}, we get the  following analogue of Statement~\ref{stat1}.
\begin{stat}Fix some $n,t,k$. If for an intersecting family of $k$-sets $\ff\subset 2^{[n]}$ with maximum degree $\Delta$ and diversity $\gamma$  we have \begin{equation}\label{eq12}\Delta+c_t\gamma\le \Big(1-\prod_{i=1}^{k}\frac{n-k+1-i}{n-t-i}\Big){n-1\choose k-1},\end{equation} then $\delta_t(\ff)\le {n-t-1\choose k-t-1}-{n-t-k-1\choose k-t-1}$.\end{stat}
\begin{proof}
Indeed, if \eqref{eq12} holds, then, using \eqref{eq5}, we get
$$\delta_t(\ff)\le \prod_{i=1}^{t}\frac{k-i}{n-i} \Big(1-\prod_{i=1}^{k}\frac{n-k+1-i}{n-t-i}\Big){n-1\choose k-1} =$$$$ {n-t-1\choose k-t-1}\Big(1-\prod_{i=1}^{k}\frac{n-k+1-i}{n-t-i}\Big) \overset{\eqref{eq07}}{=} {n-t-1\choose k-t-1}-{n-t-k-1\choose k-t-1}.$$
\end{proof}

We apply \eqref{eq01}. Our situation corresponds to the case $u\le k-t-2$. To verify \eqref{eq12}, one has to check that \begin{equation}\label{eq13}{n-u-1\choose k-1}-c_t{n-u-1\choose n-k-1}-\prod_{i=1}^{k}\frac{n-k+1-i}{n-t-i}{n-1\choose k-1}\ge 0.\end{equation} We have
$${n-u-1\choose k-1}-c_t{n-u-1\choose n-k-1} = \Big(1-\frac{k}{k-t}\prod_{i=k}^{n-k-1}\frac{n-u-i}{n-1-i}\Big) {n-u-1\choose k-1}\overset{\eqref{eq04}}{\ge}$$
$$\Big(1- \frac{k(k-1)(k-2)}{(k-t)(n-k-1)(n-k-2)}\Big) \prod_{i=1}^{k-1}\frac{n-u-i}{n-i}{n-1\choose k-1}.$$
The last expression is minimized when $u=k-t-2$. Comparing the product above with the product in \eqref{eq13}, we get $$\frac{\prod_{i=1}^{k}\frac{n-k+1-i}{n-t-i}} {\prod_{i=1}^{k-1}\frac{n-u-i}{n-i}}\le \frac{\prod_{i=1}^{k}\frac{n-k+1-i}{n-t-i}} {\prod_{i=1}^{k-1}\frac{n-k+t+2-i}{n-i}}= \frac{\prod_{i=1}^{k-t-1}\frac{n-k-t-i}{n-t-i}} {\prod_{i=1}^{k-t-2}\frac{n-k-i}{n-i}}\le $$$$ \frac{\prod_{i=1}^{k-t-1}\frac{n-k-t-i}{n-t-i}} {\prod_{i=1}^{k-t-2}\frac{n-k-t-i}{n-t-i}}=1-\frac{k}{n-k+1}.$$
Therefore, to prove \eqref{eq13}, it is sufficient for us to show that \begin{equation}\label{eq16} 1- \frac{k(k-1)(k-2)}{(k-t)(n-k-1)(n-k-2)}\ge 1-\frac{k}{n-k+1}.\end{equation}
For the fraction in the left hand side, we use the following property: if we add 1 to one of the multiples in the numerator and 1 to one of the multiples in the denominator, then the fraction will only increase, and the expression in the left hand side will decrease. If $t=1$, then the LHS of \eqref{eq16} is $$1- \frac{k(k-2)}{(n-k-1)(n-k-2)}\ge 1-\frac{k^2}{(n-k+1)(n-k-2)}>1-\frac{k}{n-k+1},$$
and therefore \eqref{eq13} is satisfied for any $k$ and
$n\ge 2k+2$.

If $1<t\le \frac k4$ and $n\ge 2k+4t$, then $(k-t)(n-k-1)\ge (k-t)(k+4t-1)= k^2+3kt-4t^2-k+t>k^2$, and, therefore,
the LHS of \eqref{eq16} is at least $$1- \frac{k^3}{(k-t)(n-k-1)(n-k+1)}> 1-\frac{k}{n-k+1},$$
and \eqref{eq13} is satisfied again.\\

\textbf{Case 3. $\gamma\ge {n-4\choose k-3}$. } As in Case 2 of the proof of Theorem~\ref{thm01}, we know that \eqref{eq06} holds. We have to verify that

\begin{equation}\label{eq08}{n-t-1\choose k-t-1}-{n-t-k-1\choose k-t-1}\ge \frac{{k\choose t}}{{n\choose t}}\Big[{n-2\choose k-2}+2{n-3\choose k-2}\Big]\end{equation}
holds.

Using \eqref{eq25}, \eqref{eq07} and \eqref{eq26}, the inequality \eqref{eq08} is equivalent to
\begin{equation}\label{eq11}\frac{k-t}{n-t} \Big(1-\prod_{i=1}^{k}\frac{n-k+1-i}{n-t-i}\Big)\ge \frac{k(k-1)(3n-2k-2)}{n(n-1)(n-2)}.\end{equation}

If $t=1$, then it simplifies to $$\Big(1-\prod_{i=1}^{k}\frac{n-k+1-i}{n-1-i}\Big)\ge \frac{k(3n-2k-2)}{n(n-2)} \ \ \Leftrightarrow \ \ \frac{(n-k)(n-2k-2)}{n(n-2)}\ge \prod_{i=1}^{k}\frac{n-k+1-i}{n-1-i}.$$
This is equivalent to \begin{equation}\label{eq09}\frac{(n-2k-2)}{n}\ge \prod_{i=2}^{k}\frac{n-k+1-i}{n-1-i}.\end{equation}
The right hand side is at most $$\frac{(n-2k+1)(n-2k+2)}{(n-3)(n-4)}\le \frac{(n-2k+4)(n-2k+2)}{n(n-4)}.$$ Therefore \eqref{eq09} follows from
$$\frac{n-2k-2}{n}\ge \frac{(n-2k+4)(n-2k+2)}{n(n-4)}\ \ \Leftrightarrow (n-2k-2)(n-4)\ge (n-2k+4)(n-2k+2),$$ which holds for any $n\ge 2k+4$ and $k\ge 12.$

If $1<t\le \frac k4-2$, then

$$1-\frac{n-t}{k-t}\cdot\frac{k(k-1)(3n-2k-2)}{n(n-1)(n-2)}\ge 1-\frac{k(k-1)(3n-2k-2)}{(k-t)n(n-1)}=$$
$$\frac{(k-t)n^2-(3k^2-3k-k+t)n+2(k^3-k)}{(k-t)n(n-1)}\ge \frac{(k-t)n^2-3k^2n+2k^3}{(k-t)n(n-1)}\overset{(*)}{\ge}$$$$ \frac{(n-k)(n-2\frac {k^2+kt}{k-t})}{n(n-1)}\ge \frac{(n-k)(n-2k-\frac {2kt}{k-t})}{n^2}\ge \frac{(n-k)(n-2k-3t)}{n^2},$$
where in the inequality (*) we used the fact that the difference between the first numerator and the second numerator multiplied by by $(k-t)$ is $ktn-2k^2t$, which is positive for $n>2k$; in the last inequality we used that $t\le k/3$. On the other hand,
$$\prod_{i=1}^{k}\frac{n-k+1-i}{n-t-i}=\prod_{i=1}^{k-t-1} \frac{n-t-k-i}{n-t-i}\le \Big(\frac{n-k}{n}\Big)^{k-t-1}\le \Big(\frac {n-k}n\Big)^{\frac {3k}{4}+1}.$$
Therefore, combining these calculations, the inequality \eqref{eq11} would follow from the inequality
\begin{equation}\label{eq10}1-\frac{2k+3t}n\ge \Big(1-\frac k{n}\Big)^{3k/4}.\end{equation}
 If $2k+14t\le n\le7k$, then the right hand side of the inequality above is at most $ e^{-\frac{3k^2}{4n}}<e^{-\frac {k}{10}},$ while the left hand side is at least $\frac {11t}{2k+14t}>\frac {22}{2k+28}$. It is easy to see that, say, for $k\ge 10$, $\frac {22}{2k+28}>e^{-\frac {k}{10}}$.

If $n>7k$ and $k\ge10$, then $$\Big(1-\frac k{n}\Big)^{3k/4}<\Big(1-\frac k{n}\Big)^{7}<1-\frac {7k}n+\frac{21k^2}{n^2}\le 1-\frac {4k}n.$$
On the other hand, $1-\frac {2k+3t}n\ge 1-\frac {3k}n$, therefore, the inequality \eqref{eq10} is verified in this case.\\

To conclude, we remark that the only conditions on $k$ that we used for $t\ge 2$ were $k\ge 4t+8$ and $k\ge 10$. The later one is implied by the former one.

\section{Degree version of the Erd\H os Matching Conjecture}\label{sec6}
The \underline{matching number} $\nu(\ff)$ of a family $\ff$ is the maximum number of pairwise disjoint sets from $\ff$. That is, intersecting families are exactly the families with matching number one. It is a natural question to ask, what is the largest family with matching number (at most) $s$. Speaking of uniform families, let us denote $e_k(n,s)$ the size of the largest family $\ff\subset{[n]\choose k}$ with $\nu(\ff)\le s$. Note that this question is only interesting when $n\ge k(s+1)$.  The following two families are the natural candidates: $$\mathcal A_0(n,k,s):=\{A\subset[n]: A\cap [s]\ne \emptyset\},$$
$$A_k(k,s):={[k(s+1)-1]\choose k}.$$
Erd\H os conjectured \cite{E} that $e_k(n,s)=\max\big\{|\aaa_0(n,k,s)|,|\aaa_k(k,s)|\big\}$. This conjecture is known as the Er\H os Matching conjecture. It was studied quite extensively over the last 50 years, but it remains unsolved in general.  It is known to be true for $k\le 3$ \cite{F5} and for $n\ge (2s+1)(k-1)$ \cite{F4}. We note that $\aaa_0(n,k,s)$ is bigger than $\aaa_k(k,s)$ already for relatively small $n$: the condition $n>(k+1)(s+1)$ should suffice.

A degree version of Erd\H os Matching Conjecture and related problems attracted a lot of attention recently (see, e.g., \cite{HPS}, \cite{KO}).
The following theorem was proved in \cite{HZ}.
\begin{thm}[\cite{HZ}]\label{thmhzm} Given $n,k,s$ with $n\ge 3k^2(s+1)$, if for a family $\ff\subset{[n]\choose k}$ with $\nu(\ff)\le s$ we have $\delta_1(\ff)\le {n-1\choose k-1}-{n-s-1\choose k-1}=\delta_1(\aaa_0(n,k,s))$.
\end{thm}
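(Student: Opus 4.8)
The plan is to argue by contradiction, combining a degree‑averaging inequality with a stability version of the Erd\H os Matching theorem. Suppose $\ff\subseteq\binom{[n]}{k}$ has $\nu(\ff)\le s$ but $\delta(x)\ge D+1$ for every $x\in[n]$, where $D:=\binom{n-1}{k-1}-\binom{n-s-1}{k-1}=\delta_1(\aaa_0(n,k,s))$. Since $n\ge 3k^2(s+1)\ge(2s+1)(k-1)$, Frankl's theorem \cite{F4} gives $|\ff|\le\max\{|\aaa_0(n,k,s)|,|\aaa_k(k,s)|\}=\binom nk-\binom{n-s}{k}$, the maximum being attained by $\aaa_0$ throughout this range of $n$.

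First I would extract a lower bound on $|\ff|$. Summing degrees, $k|\ff|=\sum_{x\in[n]}\delta(x)\ge n(D+1)$; using $n\binom{n-1}{k-1}=k\binom nk$ and $\binom{n-s-1}{k-1}=\tfrac{k}{n-s}\binom{n-s}{k}$ this rearranges to
\[
|\ff|\ \ge\ \Big(\binom nk-\binom{n-s}{k}\Big)-\frac{s}{k}\binom{n-s-1}{k-1}+\frac nk .
\]
So $\ff$ has $\nu(\ff)\le s$ and its size falls short of the Erd\H os Matching maximum by at most $\tfrac sk\binom{n-s-1}{k-1}$, which for $s<k$ is strictly smaller than $\binom{n-s-1}{k-1}$ (the range $s\ge k$ needs only minor extra care). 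The next step is to invoke stability: there is a threshold of order $k^2s$ for $n$ past which any family with $\nu\le s$ whose size is within $\binom{n-s-1}{k-1}$ of $\binom nk-\binom{n-s}{k}$ must be a sub‑star of a star over an $s$‑set, i.e.\ $\ff\subseteq\{F\in\binom{[n]}{k}:F\cap S\ne\emptyset\}$ for some $S$ with $|S|=s$; the $\aaa_k$‑type near‑extremal configuration is excluded because our lower bound on $|\ff|$ already beats $|\aaa_k(k,s)|$ once $n>(k+1)(s+1)$, in particular for $n\ge 3k^2(s+1)$. Granting this, choose any $x\in[n]\setminus S$ (possible since $|S|=s<n$): every member of $\ff$ through $x$ also meets $S$, hence $\delta(x)\le\binom{n-1}{k-1}-\binom{n-1-s}{k-1}=D$, contradicting $\delta(x)\ge D+1$. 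The degenerate case $\nu(\ff)<s$ only makes the target bound easier, and trivial families satisfy it directly.

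The main obstacle is the stability step: one must cite or reprove an Erd\H os--Matching stability statement quantitatively strong enough to apply in exactly the window $n\ge 3k^2(s+1)$ — this is where the binomial‑ratio estimates behind the rearrangement above (the source of the $3k^2$) and the exclusion of the clique‑type extremal family get used — and one needs the clean conclusion ``$\ff$ is a sub‑star of a star over an $s$‑set'' rather than a weaker ``$\ff$ is close to such a star''. If no off‑the‑shelf stability result is available in this range, an alternative is a self‑contained argument in the spirit of Section~\ref{sec3}: take a maximum matching $F_1,\dots,F_m$ of $\ff$ ($m\le s$), note every set of $\ff$ meets $U:=\bigcup_i F_i$, and use the Kruskal--Katona/Frankl machinery (Theorems~\ref{thmfr},~\ref{thmHil}) to control the ``cross'' contributions across the parts, forcing $\ff$ to be rigid — a sub‑star over $s$ elements — precisely because the large lower bound on $n$ leaves no slack. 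This route is technically heavier but bypasses external stability.
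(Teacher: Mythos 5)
You should first note that the paper does not actually prove this statement: Theorem~\ref{thmhzm} is cited verbatim from \cite{HZ}, and the paper's own contribution in Section~\ref{sec6} is Theorem~\ref{degEKR}, which is a $t$-degree generalization proved \emph{under the extra hypothesis $k\ge 3s$} (or $k\ge 5st$). Your proposal is essentially a reconstruction of the paper's proof of Theorem~\ref{degEKR} with $t=1$, combining degree averaging with the Frankl--Kupavskii stability bound (Theorem~\ref{thmhil2}), and it works in that regime. But it does not prove Theorem~\ref{thmhzm} in its stated generality, and the ``range $s\ge k$ needs only minor extra care'' remark hides a fatal gap rather than a routine one.

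Here is the concrete obstruction. After the degree sum you correctly obtain
$|\ff|\ge \bigl(\binom nk-\binom{n-s}{k}\bigr)-\tfrac sk\binom{n-s-1}{k-1}+\tfrac nk$,
so the deficit from the extremal value is essentially $\tfrac sk\binom{n-s-1}{k-1}$. For the stability step to yield a contradiction (i.e.\ to force $\tau(\ff)\le s$), you need
$\tfrac{u-s-1}{u}\binom{n-s-k}{k-1}>\tfrac sk\binom{n-s-1}{k-1}-\tfrac nk$.
As $n\to\infty$ with $k,s$ fixed, $\tfrac{u-s-1}{u}\to 1$ and $\binom{n-s-k}{k-1}/\binom{n-s-1}{k-1}\to 1$, so in the limit this inequality reads $1\gtrsim s/k$. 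When $s>k$ it fails for \emph{every} $n$, not merely for $n$ near the threshold; enlarging $n$ does not help. A numerical check makes this stark: for $k=2$, $s=10$, $n=132$ (so $n\ge 3k^2(s+1)$), one gets $|\ff|\ge 726$ from degree averaging, while the stability bound only says that $\tau(\ff)>s$ forces $|\ff|\le 1265-108=1157$. There is no contradiction, and nothing in the argument rules out a non-star family. This is precisely why the paper's own Theorem~\ref{degEKR} is stated with $k\ge 3s$: the authors are using the same strategy and acknowledge that it only covers $k\gtrsim s$. The Huang--Zhao theorem you are trying to reprove, valid for all $s$ with $n\ge 3k^2(s+1)$, requires a genuinely different argument (a shifting/fractional-relaxation approach in \cite{HZ}), and your alternative Kruskal--Katona-style route in the last paragraph is not developed enough to assess whether it would close this gap.

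Minor points: the rearrangement and the handling of $\nu(\ff)<s$ are fine, and the observation that $|\aaa_k|$ is dominated by $|\aaa_0|$ in this range (so the near-extremal configuration is the star) is correct. But as written the proof only establishes the theorem for $s$ at most a constant fraction of $k$, which is a proper subcase of what Theorem~\ref{thmhzm} claims.
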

This improved the result of Bollob\'as, Daykin, and Erd\H os \cite{BDE}, who arrived at the same conclusion for $n\ge 2k^3s$. The authors conjectured that the same should hold for any $n>k(s+1)$.
Note that in the degree version we do not include the family $\aaa_k(k,s)$, since its minimum $t$-degree is 0 for $n\ge k(s+1)$ and $t\ge 1$.  Note that, for general $t$ we have $\delta_t(\aaa_0(n,k,s))= {n-t\choose k-t}-{n-s-t\choose k-t}$.

In this paper we improve and generalize Theorem~\ref{thmhzm} above result for $k$ large in comparison to $s$.

\begin{thm}\label{degEKR} Fix $n,s,k$ and $t\ge 1$, such that $n\ge 2k^2$,  and $k\ge 5st$ ($k\ge 3s$ for $t=1$). For any family $\ff\subset {[n]\choose k}$ with $\nu(\ff)\le s$ we have $\delta_t(\ff) \le \delta(\aaa_0(n,k,s))$, with equality only in the case $\ff=\aaa_0(n,k,s)$.
\end{thm}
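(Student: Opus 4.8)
The plan is to combine the Erd\H os Matching theorem (in the range where it is known) with a crude averaging bound, and then to handle the remaining near-extremal range by extracting a junta-type structure and running a shadow argument.

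First, since $k\ge 5st\ge s$ and $n\ge 2k^2$ we have $n\ge(2s+1)(k-1)$ (and also $n-k\ge(2s-1)(k-1)$), so Frankl's theorem \cite{F4} gives $|\ff|\le|\aaa_0(n,k,s)|=\binom nk-\binom{n-s}k$ for every $\ff$ with $\nu(\ff)\le s$. Using $\binom nt\binom{n-t}{k-t}=\binom nk\binom kt$ and $\binom{n-s}t\binom{n-s-t}{k-t}=\binom{n-s}k\binom kt$, one checks the positive gap $\frac{\binom kt}{\binom nt}|\aaa_0(n,k,s)|-\delta_t(\aaa_0(n,k,s))=\binom{n-s-t}{k-t}\cdot\frac{\binom nt-\binom{n-s}t}{\binom nt}>0$. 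Hence there is an explicit $R=R(n,k,s,t)$ (of order roughly $st\,n^{t-1}k^{-t}\binom{n-s-t}{k-t}$) such that $|\ff|\le|\aaa_0(n,k,s)|-R$ already forces, via $\delta_t(\ff)\le\frac{\binom kt}{\binom nt}|\ff|$, the \emph{strict} inequality $\delta_t(\ff)<\delta_t(\aaa_0(n,k,s))$. This settles the ``small family'' case.

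Second, assume $|\ff|>|\aaa_0(n,k,s)|-R$. Here I would show that $\ff$ is essentially a star over an $s$-set. Greedily delete the $s$ vertices $y_1,\dots,y_s$ of largest (successive) degree, put $S=\{y_1,\dots,y_s\}$ and $\ff(\bar S):=\{F\in\ff:F\cap S=\emptyset\}$. For any $F_0\in\ff(\bar S)$ the family $\{F\in\ff:F\cap F_0=\emptyset\}$ has matching number at most $s-1$ on the $n-k\ge(2s-1)(k-1)$ points of $[n]\setminus F_0$, so by Frankl's theorem it omits at least $\binom{n-k-s+1}k$ of the $k$-sets disjoint from $F_0$; aggregating these forced deficiencies (with care not to overcount the omitted sets attached to different members of $\ff(\bar S)$) against $|\overline\ff|={\binom nk}-|\ff|<\binom{n-s}k+R$ forces $|\ff(\bar S)|$ to be tiny --- in fact, since $k\ge 5st$ and $n\ge2k^2$, small enough that $|\ff(\bar S)|\binom kt<\binom{n-s}t$, and pushing the estimate one gets $\ff(\bar S)=\emptyset$, i.e.\ $\ff\subseteq\{F:F\cap S\ne\emptyset\}$. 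I expect this bookkeeping --- equivalently, a Hilton--Milner type stability input for families with $\nu(\ff)\le s$ --- to be the main obstacle; everything else is routine.

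Finally, given $|\ff(\bar S)|\binom kt<\binom{n-s}t$, the number of $t$-sets lying in some member of $\ff(\bar S)$ is less than $\binom{n-s}t$, so some $T\in\binom{[n]\setminus S}{t}$ is contained in no member of $\ff(\bar S)$; then every $F\in\ff$ with $T\subset F$ meets $S$, so $\delta_t^{\ff}(T)\le\binom{n-t}{k-t}-\binom{n-s-t}{k-t}=\delta_t(\aaa_0(n,k,s))$, which is the desired bound. For the equality case: if $\ff=\{F:F\cap S\ne\emptyset\}$ then $\ff\cong\aaa_0(n,k,s)$; otherwise $\ff(\bar S)=\emptyset$ and $\ff\subsetneq\{F:F\cap S\ne\emptyset\}$, so some $F_0\notin\ff$ meets $S$, and since $k\ge 5st>s+t$ we have $|F_0\setminus S|\ge k-s\ge t$, so picking $T_0\subset F_0\setminus S$ with $|T_0|=t$ gives $\delta_t^{\ff}(T_0)\le\delta_t(\aaa_0(n,k,s))-1$. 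Together with the strictness already obtained in the small-family case, this yields that equality forces $\ff=\aaa_0(n,k,s)$.
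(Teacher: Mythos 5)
Your overall strategy --- control $|\ff|$ from above and then apply the averaging inequality $\delta_t(\ff)\le\frac{\binom kt}{\binom nt}|\ff|$ --- is the same skeleton the paper uses, and your treatment of the equality case is correct. The decisive difference is in the size bound you feed into the averaging step. The paper imports, as a black box, the Frankl--Kupavskii stability theorem (Theorem~\ref{thmhil2}, from \cite{FK7}): writing $n=(u+s-1)(k-1)+s+k$ with $u\ge s+1$, any $\G\subset\binom{[n]}k$ with $\nu(\G)=s$ and $\tau(\G)\ge s+1$ satisfies $|\G|\le\binom nk-\binom{n-s}k-\frac{u-s-1}{u}\binom{n-s-k}{k-1}$. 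With this input there is no case split at all: if $\delta_t(\ff)>\delta_t(\aaa_0(n,k,s))$ then $\ff$ cannot be isomorphic to a subfamily of $\aaa_0(n,k,s)$, hence $\tau(\ff)\ge s+1$, the stability bound applies, and a single computation under $n\ge 2k^2$, $k\ge 5st$ (resp.\ $k\ge 3s$ for $t=1$) shows the averaged $t$-degree falls strictly below $\delta_t(\aaa_0(n,k,s))$, a contradiction.

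You, by contrast, only use the unstable bound $|\ff|\le|\aaa_0(n,k,s)|$ from \cite{F4}, and you correctly observe that this alone is not enough (the gap $\frac{\binom kt}{\binom nt}|\aaa_0(n,k,s)|-\delta_t(\aaa_0(n,k,s))=\binom{n-s-t}{k-t}\frac{\binom nt-\binom{n-s}t}{\binom nt}$ is positive). You then try to manufacture the missing deficiency $R$ in the near-extremal regime via a junta extraction. The step you yourself flag --- ``aggregating these forced deficiencies\dots with care not to overcount'' --- is precisely where the argument has no proof: a single $k$-set $G\notin\ff$ can be disjoint from many members of $\ff(\bar S)$, so it is charged repeatedly, and the omitted sets disjoint from a given $F_0$ may well meet $S$, so they do not even witness a deficiency in the relevant part of $\overline\ff$. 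Fixing this is, in effect, re-deriving a version of the \cite{FK7} stability theorem inside your proof; it is a genuine piece of work, not routine bookkeeping. You should either cite Theorem~\ref{thmhil2} directly (in which case delete the two-case split, since averaging then finishes everything), or supply the missing stability argument in full.
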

We note that the constants in the proof are not optimal, and chosen in this way to simplify the calculations.
In the proof we make use of the stability theorem, proved by the author together with P. Frankl \cite{FK7, FK8}. Recall that $\tau(\ff)$ is the minimal size of a set $S\subset [n]$, such that $S\cap F\ne \emptyset$ for any $F\in \ff$. For fixed $n,s,k$, saying that $\ff\subset {[n]\choose k}$ satisfies $\nu(\ff)\le s$ and $\tau(\ff)>s$ is equivalent to saying that $\ff$ is not isomorphic to a subfamily of $\mathcal A_0(n,k,s)$.

\begin{thm}[\cite{FK7}]\label{thmhil2} Let $n = (u+s-1)(k-1)+s+k,$ $u\ge s+1$. Then for any family $\mathcal G\subset {[n]\choose k}$ with $\nu(\mathcal G)= s$ and $\tau(\mathcal G)\ge s+1$ we have
\begin{equation}\label{eqhil} |\mathcal G|\le {n\choose k}-{n-s\choose k} - \frac{u-s-1}u{n-s-k\choose k-1}.
\end{equation} \end{thm}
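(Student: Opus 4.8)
The plan is to combine the shifting technique with the known cases of the Erd\H os Matching Conjecture. First observe that the hypothesis $n=(u+s-1)(k-1)+s+k$ with $u\ge s+1$ forces $n\ge(2s+1)k-s$, so by Frankl's theorem $e_k(n,s)=|\mathcal A_0(n,k,s)|=\binom nk-\binom{n-s}k$, and in this range $|\mathcal A_0(n,k,s)|$ comfortably dominates $|\mathcal A_k(k,s)|$; thus the content of the statement is exactly the extra saving $\tfrac{u-s-1}{u}\binom{n-s-k}{k-1}$ which is forced by the hypothesis $\tau(\mathcal G)\ge s+1$. I would first replace $\mathcal G$ by an inclusion-maximal family with $\nu\le s$ and $\tau\ge s+1$, then apply $(i,j)$-shifts. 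Shifts preserve $\nu\le s$, so the only danger is that some shift decreases $\tau$; accordingly I split into the case that the fully shifted family $\mathcal G'$ still satisfies $\tau(\mathcal G')\ge s+1$, and the case that some shift $S_{i_0j_0}$ is the first to destroy this property.

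In the shifted case I would use the classical criterion that in a shifted family with $\nu\le s$ every member $G=\{a_1<\dots<a_k\}$ has an index $i$ with $a_i\le(s+1)i$. Combined with $\tau(\mathcal G')\ge s+1$ (which in particular yields a set of $\mathcal G'$ disjoint from $[s]$, together with a maximum matching) this determines all $k$-sets of $\mathcal G'$ meeting $[s]$ and severely restricts those avoiding $[s]$. After discarding the $k$-sets meeting $[s]$ one is left with a family of $k$-sets on the $n-s$ points $[s+1,n]$, still of bounded matching number and with a constrained coordinate; peeling off one further fixed element reduces the count on $[s+1,n]$ to a $(k-1)$-uniform matching problem on the residual $n-s-k=(u+s-1)(k-1)$ points, and the exact deficiency $\tfrac{u-s-1}{u}\binom{(u+s-1)(k-1)}{k-1}$ drops out of the corresponding extremal bound (using that $u+s-1$ disjoint $(k-1)$-sets fill those points exactly, so a positive proportion of them must be absent). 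In the non-shifted case I would invoke the standard ``shift-resistant configuration'' argument: when $S_{i_0j_0}$ lowers $\tau$ below $s+1$, the pre-shift family contains an explicit bounded sub-configuration that simultaneously witnesses $\tau\ge s+1$ and survives the shift; working with that fixed configuration one again reduces to counting on the residual ground set and obtains at least the same saving.

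The main obstacle is extracting the \emph{exact} constant $\tfrac{u-s-1}{u}$ rather than merely ``some positive saving.'' This forces one to pin down the precise auxiliary $(k-1)$-uniform extremal problem on the $(u+s-1)(k-1)$ residual points, to track equality cases through the reductions, and — in the shifting route — to control how much $\tau$ can drop at a single shift and to guarantee that the witnessing configuration is not destroyed; this bookkeeping is the delicate heart of the argument. An alternative that sidesteps the behaviour of $\tau$ under shifting is to induct on $s$: condition on a most popular element $x$, write $\mathcal G(x)\subset\binom{[n]\setminus x}{k-1}$ and $\mathcal G(\bar x)\subset\binom{[n]\setminus x}{k}$, record the (cross-)matching constraints these two families jointly satisfy, and reduce to the smaller parameter, the base case $s=1$ being the Hilton--Milner theorem; here the difficulty migrates to turning the joint matching condition on the pair $(\mathcal G(x),\mathcal G(\bar x))$ into a usable inequality.
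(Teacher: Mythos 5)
First, a point of order: Theorem~\ref{thmhil2} is imported from \cite{FK7} and the present paper contains no proof of it, so there is no in-paper argument to measure your sketch against; it has to stand on its own. As written it is a strategy outline rather than a proof, and the steps you explicitly defer (``the standard shift-resistant configuration argument'', the ``bookkeeping'' that you yourself call ``the delicate heart of the argument'') are exactly where the entire content of the theorem lies. The preliminary observations are fine: $u\ge s+1$ does give $n\ge(2s+1)(k-1)$, so Frankl's resolution of the Erd\H os Matching Conjecture in this range applies, and the shifted-family criterion (every $G=\{a_1<\dots<a_k\}$ in a shifted family with $\nu\le s$ has some $i$ with $a_i\le (s+1)i-1$) is the right classical tool. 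But none of this yet produces a deficit below $\binom{n}{k}-\binom{n-s}{k}$.

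The genuine gaps are these. (i) A single $(i,j)$-shift can drop the covering number below $s+1$, and your treatment of that branch is an assertion, not an argument: you would need to exhibit the surviving sub-configuration explicitly and show that it still forces the \emph{same} quantitative saving $\frac{u-s-1}{u}\binom{n-s-k}{k-1}$, which is precisely the hard case. (ii) The mechanism you offer for the exact constant does not yield it. The deficiency is a $\frac{u-s-1}{u}$ fraction of \emph{all} $(k-1)$-subsets of the residual $(u+s-1)(k-1)$ points --- for large $u$ this says almost all of them are missing --- whereas ``a positive proportion of $u+s-1$ disjoint $(k-1)$-sets must be absent'' accounts for a saving of order $u$ per matching, and even averaging such a statement over a Baranyai-type decomposition of $\binom{[(u+s-1)(k-1)]}{k-1}$ into perfect matchings would naturally produce a denominator $u+s-1$, not $u$; no derivation of the stated fraction is given. (iii) The claim that $\tau(\mathcal G')\ge s+1$ together with maximality ``determines all $k$-sets of $\mathcal G'$ meeting $[s]$'' is false: a set meeting $[s]$ can be absent from a family that is maximal subject to $\nu\le s$ whenever the family already contains $s$ pairwise disjoint members avoiding it, so the sets meeting $[s]$ are only \emph{upper-bounded} by $\binom{n}{k}-\binom{n-s}{k}$, and the whole theorem is about quantifying how far below that count one must fall. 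Until these three points are filled in, the proposal does not establish the bound.
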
\vskip+0.4cm

\begin{proof}[Proof of Theorem \ref{degEKR}]
We prove the theorem by contradiction. Fix $t\ge 1$ and take a family $\ff$ with $\delta_t(\ff)>{n-t\choose s-t}-{n-s-t\choose s-t}$. It cannot be a subfamily of $\aaa_0(n,k,s)$ since $\delta_t(\ff)>\delta_t(\aaa_0(n,k,s))$. Therefore, $\tau(\ff)\ge s+1$ and, assuming that $\nu(\ff)\le s$, we conclude that $|\ff|$ satisfies \eqref{eqhil}. By simple double counting, we have
$$\delta_t(\ff)\le \frac{{k\choose t}}{{n\choose t}}\Big[{n\choose k}-{n-s\choose k} - \frac{u-s-1}u{n-s-k\choose k-1}\Big].$$
Note that ${n-s-t\choose k-t}=\frac{{k\choose t}}{{n-s\choose t}}{n-s\choose k}$ and ${n-t\choose k-t}=\frac{{k\choose t}}{{n\choose t}}{n\choose k}$. We have
$$\delta_t(\aaa_0(n,k,s))-\delta_t(\ff)\ge \frac{{k\choose t}(u-s-1)}{{n\choose t}u}{n-s-k\choose k-1}-\Big[\frac{{k\choose t}}{{n-s\choose t}}-\frac{{k\choose t}}{{n\choose t}}\Big]{n-s\choose k}=$$
$$\frac{{k\choose t}}{{n\choose t}}\Bigg[\frac{u-s-1}{u}{n-s-k\choose k-1}-\Big[\prod_{i=0}^{t-1} \frac{n-i}{n-s-i}-1\Big]{n-s\choose k}\Bigg]=(*).$$
We have $\prod_{i=0}^{t-1} \frac{n-i}{n-s-i}-1\le (1+\frac s{n-s-t})^t-1.$ It is not difficult to verify that for $\theta<\frac 1{2m}$ one has $(1+\theta)^m\le 1+2m\theta$. Therefore, assuming that \begin{equation}\label{eq33}n\ge s+t+2st,\end{equation} we have \begin{equation}\label{eq35}\prod_{i=0}^{t-1} \frac{n-i}{n-s-i}-1\le \frac {2ts}{n-s-t}.\end{equation}
On the other hand, we have $(1-\theta)^m\ge 1-m\theta$ and $t\le k-1$, and $$\frac{{n-s-k\choose k-1}}{{n-s\choose k}}=\frac {k}{n-s-k+1}\prod_{i=0}^{k-1}\frac{n-s-k-i+1}{n-s-i}>\frac{k}{n-s-t} \Big(1-\frac {k-1}{n-s-k}\Big)^k>$$
$$\frac{k}{n-s-t}\Big(1-\frac{(k-1)k}{n-s-k}\Big)>\frac k{2(n-s-t)},$$
provided \begin{equation}\label{eq34}n\ge s+k+2k(k-1).\end{equation} We conclude that, provided that \eqref{eq33} and \eqref{eq34} hold, we get
$$(*)> \frac{{k\choose t}}{{n\choose t}}\Bigg[\frac{u-s-1}{u}\frac k{2(n-s-t)}-\frac {2ts}{n-s-t}\Bigg]{n-s\choose k},$$
which is nonnegative provided $k\ge 4ts\frac u{u-s-1}$. This inequality holds for $k\ge 5ts$ and $u\ge 9s$. The last inequality is satisfied for $n\ge 2k^2$, since then $n\ge 2k^2\ge (9s+s)k\ge (9s+s-1)(k-1)+s+k$. We note that with this choice of $n$ and $k$ both \eqref{eq33} and \eqref{eq34} hold.

For $t=1$ one may improve \eqref{eq35} to $\frac n{n-s}-1\le \frac s{n-s}$ and the condition on $k$ may be relaxed to $k\ge 3s$. The equality part of the statement follows easily from the fact that strict inequality is obtained in the case when $\tau(\ff)\ge s+1$.
\end{proof}

\section{Conclusion}\label{sec7}
In this paper we explored several question concerning intersecting families. Some of these questions remain only partially resolved, and it would be highly desirable to settle them. 

First of all, it would be desirable to understand the structure of families with diversity larger than ${n-3\choose k-2}$. As shown in \cite{Kup21}, no such family exist for $n>Ck$ with some absolute constant $C$. It is believed to be true for $n>3k-2$. For $2k<n<3k$, however, we do have such families, and both the proof of the conjecture above and the understanding of their structure is interesting on its own and would be helpful in different questions (for more information, see \cite{Kup21}).

In this paper we have applied the machinery of Section~\ref{sec3}, based on papers \cite{FK1}, \cite{KZ}, to the setting of general families (see Theorem~\ref{thmclass2}). This gave a reasonable classification of {\it all} large intersecting families. However, it is by no means complete.

\begin{pro} To what extent one could relax the condition on diversity and/or on $t$ in Theorem~\ref{thmclass2}?
\end{pro}
\begin{pro} In terms of Theorem~\ref{thmclass2}, is there a reasonable way to compare the sizes of intersecting families generated by typical minimal families? In particular, we believe that, if $\ff(\bar 1)$ contains a typical minimal subfamily $\mathcal G$, such that $|\cap_{F\in \G}F|= t\ge 5$, then
\begin{equation} |\ff|\le |\mathcal J_{k-t+1}|
\end{equation}
with equality only possible if $\ff$ is isomorphic to $\mathcal J_s$.
\end{pro}
We believe that Theorem~\ref{thmclass2} is an important step towards classification of families with large covering numbers $\tau(\ff)$ (the size of the smallest subset that intersects any set from $\ff$). An important result in this direction we obtained by Frankl \cite{F8}, who resolved this problem for $\tau(\ff)=3$ and, again, for large enough $n = n(k)$.
\begin{pro} Extend Theorem~\ref{thmclass2} to families with $\tau(\ff)\ge k$.
\end{pro}

The next question concerns the degree version of the Hilton-Minler theorem.

\begin{pro} Is there an example for $n=2k+1$ ($n=2k+ct$, $c$ is a small constant), such that there exists a non-trivial intersecting family $\ff$ with minimal $1$-degree ($t$-degree) higher than that of the Hilton-Milner family?
\end{pro}
One reason to believe that the answer to this question is positive is that the degrees of elements in the Hilton-Milner family are irregular, even if we exclude the element of the highest degree out of consideration.

Finally, the following question concerning cross-intersecting families seem interesting for us.

\begin{pro} Consider two cross-intersecting families $\aaa,\ \bb\subset{[n]\choose k}$ that are disjoint. Is it true that $$\min\{|\aaa|, |\bb|\}\le \frac 12{n-1\choose k-1}?$$
\end{pro}

\end{document}